\theoremstyle{plain} 
\newtheorem{theorem}{\indent\sc Theorem}[section]
\newtheorem{lemma}[theorem]{\indent\sc Lemma}
\newtheorem{corollary}[theorem]{\indent\sc Corollary}
\newtheorem{proposition}[theorem]{\indent\sc Proposition}
\theoremstyle{definition} 
\newtheorem{definition}[theorem]{\indent\sc Definition}
\newtheorem{remark}[theorem]{\indent\sc Remark}
\newtheorem{example}[theorem]{\indent\sc Example}
\numberwithin{equation}{section}
\def\nmo{{n\hn-\nh1}}
\def\hyp{\hskip.5pt\vbox
{\hbox{\vrule width2.5ptheight0.5ptdepth0pt}\vskip2pt}\hskip.5pt}
\def\hs{\hskip.7pt}
\def\hh{\hskip.4pt}
\def\nh{\hskip-.7pt}
\font\smallit=ptmri at 11.96pt
\def\bg{\varPi}
\def\rd{\delta}
\def\lp{\alpha}
\def\stb{\varSigma}
\def\sg{\mathcal{G}}
\def\tvs{\mathcal{V}}
\def\tws{\mathcal{W}}
\def\w{^{\phantom i}}
\def\nnh{\hskip-1.5pt}
\def\hn{\hskip-.4pt}
\def\bbR{\mathrm{I\!R}}
\def\rn{\bbR\nh^n}
\def\bbQ{{\mathchoice{\setbox0=\hbox{$\displaystyle\rm Q$}\hbox{\raise
0.15\ht0\hbox to0pt{\kern0.4\wd0\vrule height0.8\ht0\hss}\box0}}
{\setbox0=\hbox{$\textstyle\rm Q$}\hbox{\raise
0.15\ht0\hbox to0pt{\kern0.4\wd0\vrule height0.8\ht0\hss}\box0}}
{\setbox0=\hbox{$\scriptstyle\rm Q$}\hbox{\raise
0.15\ht0\hbox to0pt{\kern0.4\wd0\vrule height0.7\ht0\hss}\box0}}
{\setbox0=\hbox{$\scriptscriptstyle\rm Q$}\hbox{\raise
0.15\ht0\hbox to0pt{\kern0.4\wd0\vrule height0.7\ht0\hss}\box0}}}}
\newcommand{\bbC}{{\mathchoice {\setbox0=\hbox{$\displaystyle\mathrm{C}$}
\hbox{\hbox to0pt{\kern0.4\wd0\vrule height0.9\ht0\hss}\box0}} 
{\setbox0=\hbox{$\textstyle\mathrm{C}$}\hbox{\hbox 
to0pt{\kern0.4\wd0\vrule height0.9\ht0\hss}\box0}} 
{\setbox0=\hbox{$\scriptstyle\mathrm{C}$}\hbox{\hbox 
to0pt{\kern0.4\wd0\vrule height0.9\ht0\hss}\box0}} 
{\setbox0=\hbox{$\scriptscriptstyle\mathrm{C}$}\hbox{\hbox 
to0pt{\kern0.4\wd0\vrule height0.9\ht0\hss}\box0}}}}
\def\bbZ{\mathsf{Z\hskip-5ptZ}}
\def\dimr{\dim_{\hskip.3pt\bbR\hskip-1.7pt}\w}
\def\dimq{\dim_{\hskip.8pt\bbQ\hskip-1.7pt}\w}
\def\dimz{\dim_{\hskip-.2pt\mathsf{Z\hskip-3.5ptZ}\hskip-1.7pt}\w}
\def\spr{\mathrm{span}_{\hskip-.2pt\bbR\hskip-1.7pt}\w}
\def\w{^{\phantom i}}
\title[Flat manifolds and reducibility]{Flat manifolds and reducibility}
\author{Andrzej Derdzinski \and Paolo Piccione}
\address{
\begin{tabular}{lll}
Department of Mathematics&&Departamento de Matem\'atica\\
The Ohio State University&&Instituto de Matem\'atica e Estat\'\i stica\\
231 W. 18th Avenue&&Universidade de S\~ao Paulo\\
Columbus, OH 43210&&Rua do Mat\~ao 1010, CEP 05508-900\\
United States of America&&S\~ao Paulo, SP, Brazil\\
{\tt andrzej@math.ohio-state.edu}&&{\tt piccione@ime.usp.br}\\
\end{tabular}
}
\subjclass[2010]{Primary 53C25, 53C12; Secondary 20H15, 53C29}
\keywords{Flat manifold, Bieberbach group, lattice, holonomy}
\begin{document}
\begin{abstract}
Hiss and Szczepa\'nski proved in 1991 that the holonomy group of any compact 
flat Riemannian manifold, of dimension at least two, acts reducibly on the 
rational span of the Euclidean lattice associated with the manifold via the 
first Bieberbach theorem. Geometrically, their result states that such a 
manifold must admit a nonzero proper parallel distribution with compact 
leaves. We study algebraic and geometric properties of the sublattice-spanned 
holonomy-invariant subspaces that exist due to the above theorem, and of the 
resulting compact-leaf foliations of compact flat manifolds. The class 
consisting of the former subspaces, in addition to being closed under spans 
and intersections, also turns out to admit (usually nonorthogonal) 
complements. As for the latter foliations, we provide descriptions, first -- 
and foremost -- of the intrinsic geometry of their generic leaves in terms of 
that of the original flat manifold and, secondly -- as an essentially obvious 
afterthought -- of the leaf-space orbifold. The general conclusions are then 
illustrated by examples in the form of generalized Klein bottles.
\end{abstract}
\maketitle

\section{Introduction}\label{in}
As shown by Hiss and Szczepa\'nski 
\cite[the corollary in Sect.\ 1]{hiss-szczepanski}, on any compact flat 
Riemannian manifold $\,\mathcal{M}\hs$ with $\,\dim\mathcal{M}=n\ge2\,$ there 
exists a parallel distribution $\,D\hs$ of dimension $\,k$, where $\,0<k<n$, 
such that the leaves of $\,D\hs$ are all compact. Their result, in its 
original algebraic phrasing (see the Appendix), stated that the holonomy group 
$\,H$ of $\,\mathcal{M}\,$ must act reducibly on $\,L\otimes\hn\bbQ$, for
the Euclidean lattice $\,L\,$ corresponding to $\,\mathcal{M}$ (which is a 
maximal A\-bel\-i\-an sub\-group of the fundamental group $\,\bg\hs$ of 
$\,\mathcal{M}$).

The present paper explores the algebraic context and geometric consequences of 
this fact. We view $\,L\,$ as an additive sub\-group of a Euclidean vector 
space $\,\tvs\hs$ (so that $\,L\otimes\hn\bbQ$ becomes identified with the 
rational span of $\,L\,$ in $\,\tvs$), and use the term 
$\,L${\smallit-sub\-space\/} when referring to a vector sub\-space of 
$\,\tvs\hs$ spanned by some subset of $\,L$.

Hiss and Szczepa\'nski's theorem amounts to the existence a nonzero proper 
$\,H\nh$-in\-var\-i\-ant $\,L$-sub\-space $\,\tvs\hh'\subseteq\tvs\nnh$. We 
begin by observing that the class of $\,H\nh$-in\-var\-i\-ant 
$\,L$-sub\-spaces of $\,\tvs\hs$ is closed under the span and intersection 
operations applied to its arbitrary sub\-class\-es (Lemma~\ref{spint}), while 
every $\,H\nh$-in\-var\-i\-ant $\,L$-sub\-space of $\,\tvs\hs$ has an 
$\,H\nh$-in\-var\-i\-ant $\,L$-sub\-space complementary to it 
(Theorem~\ref{invcp}).

The Bie\-ber\-bach group of a given compact flat Riemannian manifold 
$\,\mathcal{M}\hs$ is its fundamental group $\,\bg\hs$ treated as the deck 
transformation group acting via af\-fine isometries on the Euclidean af\-fine 
space $\,\mathcal{E}\hs$ that constitutes the Riemannian universal covering 
space of $\,\mathcal{M}$. The space $\,\tvs\hs$ mentioned above is associated 
with $\,\mathcal{E}\hs$ by being its translation vector space, that is, the 
space of parallel vector fields on $\,\mathcal{E}\nnh$, and the roles of the 
lattice $\,L\,$ and  holonomy group $\,H\hs$ are summarized by the short exact 
sequence $\,L\to\hs\bg\to H$. See Section~\ref{bg}. We proceed to describe, 
in Sections~\ref{lr} --~\ref{gg}, the constituents 
$\,L\nh'\nnh,\bg\hn'\nnh,H\nh'$ appearing in the analog 
$\,L\nh'\nh\to\hs\bg\hn'\nh\to H\nh'$ of this short exact sequence for any 
(compact, flat) leaf $\,\mathcal{M}\hn'$ of a parallel distribution $\,D\nh$, 
guaranteed to exist on $\,\mathcal{M}\hs$ by the aforementioned result of 
\cite{hiss-szczepanski}. Specifically, by Theorem~\ref{restr}(ii), $\,\bg\hn'$ 
(or, $\,L\nh'$) may be identified with a sub\-group of $\,\bg\hs$ (or, 
$\,\tvs$), and $\,H\nh'$ with a homo\-mor\-phic image of a sub\-group of 
$\,H\nnh$. This description becomes particularly simple for leaves 
$\,\mathcal{M}\hn'$ which we call {\smallit generic\/} (Theorem~\ref{gnric}): 
their union is an open dense subset of $\,\mathcal{M}$, they all have the same 
triple $\,L\nh'\nnh,\bg\hn'\nnh,H\nh'\nnh$, and are mutually isometric. When 
all leaves of $\,D\,$ happen to be generic, they form a locally trivial bundle 
with compact flat manifolds serving both as the base and the fibre (the 
{\smallit fibration case}).

Aside from the holonomy group $\,H\nh'$ of each individual leaf 
$\,\mathcal{M}\hn'$ of $\,D\nh$, forming a part of its intrinsic 
(sub\-man\-i\-fold) geometry, $\,\mathcal{M}\hn'$ also gives rise to two 
``extrinsic'' holonomy groups, one arising since $\,\mathcal{M}\hn'$ is a leaf 
of the foliation $\,F\hskip-3.8pt_{\mathcal{M}}\w$ of $\,\mathcal{M}\,$ 
tangent to $\,D\nh$, the other coming from the normal connection of 
$\,\mathcal{M}\hn'\nnh$. Due to flatness of the normal connection, the two 
extrinsic holonomy groups coincide, and are trivial for all generic leaves. In 
Section~\ref{ls} we briefly discuss the leaf space 
$\,\mathcal{M}/\nh F\hskip-3.8pt_{\mathcal{M}}\w$, pointing out that (not 
surprisingly!)
\begin{equation}\label{lsp}
\begin{array}{l}
\mathcal{M}/\nh F\hskip-3.8pt_{\mathcal{M}}\w\mathrm{\,forms\ a\ flat\ 
compact\ or\-bi\-fold,\nnh\ canonically\ identified\ with\ the\ quotient}\\
{}\mathrm{of\ the\ torus\ 
}\,\hs[\mathcal{E}\nnh/\hn\tvs\hh']/[L\cap\tvs\hh']\hs\mathrm{\ under\ 
the\ isometric\ action\ of\ the\ finite\ group\ }\hs\,H\nh.
\end{array}
\end{equation}
In the fibration case (see above), 
$\,\mathcal{M}/\nh F\hskip-3.8pt_{\mathcal{M}}\w$ is the base manifold of the 
bundle.

We illustrate the above conclusions by examples (generalized Klein bottles, 
Section~\ref{gk}), where both the fibration and non-fibration cases occur, 
depending on the choice of $\,D\nh$. 

Section~\ref{ig} provides a formula for the intersection number of generic 
leaves of the foliations of the compact flat manifold $\,\mathcal{M}\,$ 
arising from two mutually complementary $\,H\nh$-in\-var\-i\-ant 
$\,L$-sub\-spaces of $\,\tvs\hs$ (cf.\ Theorem~\ref{invcp}, mentioned earlier).

Both authors' research was supported in part by a FAPESP\nh-\hs OSU 2015 
Regular Research Award (FAPESP grant: 2015/50265-6). The authors wish to thank 
Andrzej Szczepa\'nski for helpful suggestions.

\section{Preliminaries}\label{pr}
Manifolds, mappings and tensor fields, such as bundle and covering 
projections, sub\-man\-i\-fold inclusions, and Riemannian metrics, are by 
definition of class $\,C^\infty\nnh$. Sub\-man\-i\-folds need not carry the 
subset topology, and a manifold may be disconnected (although, being required 
to satisfy the second countability axiom, it must have at most countably many 
connected components). Connectedness/compactness of a sub\-man\-i\-fold 
always refer to its own topology, and imply the same for its underlying set 
within the ambient manifold. Thus, a compact sub\-man\-i\-fold is always 
endowed with the subset topology.
By a {\smallit distribution\/} on a manifold $\,\mathcal{N}\hs$ we mean, as 
usual, a (smooth) vector sub\-bundle $\,D$ of the tangent bundle 
$\,T\mathcal{N}\nh$. An {\smallit integral manifold\/} of $\,D\,$ is any 
sub\-man\-i\-fold $\,\mathcal{L}\,$ of $\,\mathcal{N}\hs$ with 
$\,T\hskip-4pt_x\w\mathcal{L}=D\nnh_x\w$ for all $\,x\in\mathcal{L}$. The 
maximal connected integral manifolds of $\,D\,$ will also be referred to as 
the {\smallit leaves\/} of $\,D\nh$. In the case where $\,D\,$ is integrable, 
its leaves form the foliation associated with $\,D\nh$. We call $\,D\,$ 
{\smallit pro\-ject\-a\-ble\/} under a mapping 
$\,\psi:\mathcal{N}\nh\to\hat{\mathcal{N}}$ onto a distribution 
$\,\hat D\,$ on the target manifold $\,\hat{\mathcal{N}}\hs$ if 
$\,d\psi_x\w(D\nnh_x\w)=\hat D\nnh_{\psi(x)}\w$ whenever 
$\,x\in\mathcal{N}\nnh$.
\begin{remark}\label{covpr}The following well-known facts will be used below.
\begin{enumerate}
  \def\theenumi{{\rm\alph{enumi}}}
\item Free dif\-feo\-mor\-phic actions of finite groups on manifolds are 
properly discontinuous and thus give rise to covering projections onto the 
resulting quotient manifolds.
\item Any lo\-cal\-ly-dif\-feo\-mor\-phic mapping from a compact manifold into 
a connected manifold is a (surjective) finite covering projection.
\item More generally, the phrases `lo\-cal\-ly-dif\-feo\-mor\-phic mapping' 
and `finite covering projection' in (b) may be replaced with {\smallit 
submersion\/} and {\smallit fibration}.
\end{enumerate}
\end{remark}
\begin{lemma}\label{cptlv}{\smallit 
Let a distribution\/ $\,\hat D\,$ on\/ $\,\hat{\mathcal{M}}\,$ be 
pro\-ject\-a\-ble, under a locally dif\-feo\-mor\-phic surjective mapping\/ 
$\,\psi:\hat{\mathcal{M}}\to\mathcal{M}\,$ between manifolds, onto a 
distribution\/ $\,D\,$ on\/ $\,\mathcal{M}$.
\begin{enumerate}
  \def\theenumi{{\rm\roman{enumi}}}
\item The\/ $\,\psi$-im\-age of any leaf of\/ $\,\hat D\,$ is a connected 
integral manifold of\/ $\,D\nh$.
\item Integrability of\/ $\,\hat D\,$ implies that of\/ $\,D\nh$.
\item For any compact leaf\/ $\,\mathcal{L}\,$ of $\,\hat D\nh$, the image\/ 
$\,\mathcal{L}'\nh=\psi(\mathcal{L})\,$ is a compact leaf of $\,D\nh$, and 
the restriction\/ $\,\psi:\mathcal{L}\nh\to\mathcal{L}'$ constitutes 
a covering projection.
\item If the leaves of\/ $\,\hat D\,$ are all compact, so are those of\/ 
$\,D\nh$.
\end{enumerate}
}
\end{lemma}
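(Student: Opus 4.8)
The plan is to reduce every assertion to the purely local picture in which $\psi$ is a diffeomorphism carrying $\hat D$ onto $D$. The starting point is the remark that, since $\psi$ is a surjective local diffeomorphism under which $\hat D$ is projectable onto $D$, each differential $d\psi_{\hat x}$ is a linear isomorphism restricting to an isomorphism $\hat D_{\hat x}\to D_{\psi(\hat x)}$; equivalently $\hat D_{\hat x}=(d\psi_{\hat x})^{-1}(D_{\psi(\hat x)})$, so that $\dim\hat D=\dim D=:k$ and every $\hat x$ has an open neighborhood $\hat U$ mapped by $\psi$ diffeomorphically onto an open set $U=\psi(\hat U)\subseteq\mathcal M$, the diffeomorphism carrying $\hat D|_{\hat U}$ onto $D|_U$ and hence identifying the integral manifolds of $\hat D$ lying in $\hat U$ with those of $D$ lying in $U$. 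Since integrability (involutivity) is a local condition, this already yields assertion~(ii); it also shows, for later use, that the $\psi$-preimage of any integral manifold of $D$ is an integral manifold of $\hat D$ of the same dimension $k$.

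For (i), I would fix a leaf $\mathcal L$ of $\hat D$, set $\mathcal L'=\psi(\mathcal L)$, and note that $f=\psi|_{\mathcal L}\colon\mathcal L\to\mathcal M$ is an immersion (the composite of $\mathcal L\hookrightarrow\hat{\mathcal M}$ with the local diffeomorphism $\psi$) with $df_{\hat x}$ carrying $T_{\hat x}\mathcal L=\hat D_{\hat x}$ isomorphically onto $D_{f(\hat x)}$. Covering $\mathcal L$ by ``plaques'' --- connected open subsets of $\mathcal L$ each contained in a set $\hat U$ as above --- one has that $\psi$ sends each plaque onto an embedded integral manifold of $D$, and the plan is to use these plaque images as the charts of a manifold structure on $\mathcal L'$ for which $f\colon\mathcal L\to\mathcal L'$ becomes a surjective local diffeomorphism; connectedness of $\mathcal L'$ is then inherited from that of $\mathcal L$, and $T_{f(\hat x)}\mathcal L'=df_{\hat x}(\hat D_{\hat x})=D_{f(\hat x)}$ exhibits $\mathcal L'$ as a $k$-dimensional integral manifold of $D$. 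The one substantial point --- and the step I expect to be the main obstacle of the lemma --- is consistency of this atlas, i.e.\ that whenever two plaque images meet at a common point $y\in\mathcal L'$ they share the same germ at $y$ (equivalently: that $\psi$ does not fold the connected set $\mathcal L$ onto a self-crossing subset of $\mathcal M$). I would prove this by a connectedness argument: the image of the plaque through $\hat x$ has a well-defined germ at $\psi(\hat x)$, nearby plaques have nested images and hence ``coherent'' germs, so along the connected $\mathcal L$ the germ cannot jump and any two such germs over a single point must agree; here the two hypotheses --- that $\psi$ is a local diffeomorphism and that $\mathcal L$ is connected --- are used essentially, and when $D$ happens to be integrable the argument collapses, every plaque image being then an open subset of the single leaf of $D$ through $y$.

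Granting (i), assertion~(iii) is short. If $\mathcal L$ is a compact leaf of $\hat D$, then by~(i) $\mathcal L'=\psi(\mathcal L)$ is a connected integral manifold of $D$ and $f\colon\mathcal L\to\mathcal L'$ is a local diffeomorphism of a compact manifold into a connected one, hence by Remark~\ref{covpr}(b) a surjective finite covering projection; in particular $\mathcal L'$, being the continuous image of the compact space $\mathcal L$, is compact. To see that $\mathcal L'$ is a \emph{leaf}, suppose $\mathcal L'\subseteq\mathcal N'$ with $\mathcal N'$ a connected integral manifold of $D$: then $\psi^{-1}(\mathcal N')$ is an integral manifold of $\hat D$ containing $\mathcal L$, the connected component of $\psi^{-1}(\mathcal N')$ through a point of $\mathcal L$ is a connected integral manifold of $\hat D$ containing the leaf $\mathcal L$ and so equals $\mathcal L$ by maximality, whence $\mathcal L$ is a component of $\psi^{-1}(\mathcal N')$; now $f=\psi|_{\mathcal L}\colon\mathcal L\to\mathcal N'$ is again a local diffeomorphism of a compact manifold into a connected one, so it is onto, giving $\mathcal N'=\psi(\mathcal L)=\mathcal L'$. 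Thus $\mathcal L'$ is a compact leaf and $\psi|_{\mathcal L}\colon\mathcal L\to\mathcal L'$ a covering projection.

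Finally, for (iv), assume all leaves of $\hat D$ are compact and let $\mathcal N'$ be a leaf of $D$; choose $y\in\mathcal N'$ and, by surjectivity, $x\in\psi^{-1}(y)$. The component through $x$ of the integral manifold $\psi^{-1}(\mathcal N')$ of $\hat D$ is a connected integral manifold of $\hat D$, hence is contained in a leaf $\mathcal L$ of $\hat D$ through $x$, and $\mathcal L$ is compact by hypothesis. By~(iii), $\mathcal L'=\psi(\mathcal L)$ is a compact leaf of $D$ containing $y$, and $\psi$ carries that component onto a nonempty, relatively open subset of $\mathcal L'$ lying in $\mathcal N'\cap\mathcal L'$. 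Since $\mathcal L'$ is compact it is closed in $\mathcal M$, so $\mathcal N'\cap\mathcal L'$ is closed in $\mathcal N'$; being also nonempty and open in the connected $\mathcal N'$, it equals $\mathcal N'$, i.e.\ $\mathcal N'\subseteq\mathcal L'$, and maximality of the leaf $\mathcal N'$ forces $\mathcal N'=\mathcal L'$, which is compact. (When $\hat D$, hence by~(ii) also $D$, is integrable --- the only case relevant to this paper --- this last step is merely the uniqueness of the leaf of $D$ through $y$.)
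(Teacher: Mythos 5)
Your overall skeleton is the paper's: (ii) is local, (iii) comes from part (i) plus Remark~\ref{covpr}(b), and (iv) follows from (iii); the paper simply declares (i) ``immediate from the definitions'' while you, rightly, isolate the one substantive point in (i), namely that $\psi$ cannot fold the leaf $\mathcal L$ onto a self-crossing image, i.e.\ that two plaque images through a common point $y$ have the same germ there. The problem is that your proposed justification of this point is not a proof. The ``connectedness'' argument --- germs are locally coherent along the connected $\mathcal L$, hence cannot jump, hence agree over a single image point --- never uses tangency to $D$ in any essential way, and as a reasoning pattern it applies verbatim to an arbitrary non-injective immersion of a connected manifold (say, a curve tracing a lemniscate in the plane: each point has a well-defined local image germ, nearby germs are nested, the source is connected, and yet the two germs at the double point differ). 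Also, the natural set on which one would run an open-closed argument, $\{(\hat x_1,\hat x_2):\psi(\hat x_1)=\psi(\hat x_2)\}$, is not connected in the relevant situations (it is disconnected precisely when $\psi|_{\mathcal L}$ is a nontrivial covering onto its image), so connectedness of $\mathcal L$ by itself cannot close the argument.

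What actually makes the step work is \emph{local uniqueness of $k$-dimensional integral manifolds}: near $y$ choose coordinates in which $D$ is spanned by $\partial_{x_i}+\sum_\alpha a_{i\alpha}(x,z)\hh\partial_{z_\alpha}$; any integral manifold through $y$ is locally a graph $z=u(x)$ with $\partial u_\alpha/\partial x_i=a_{i\alpha}(x,u)$, and integrating along radial segments, ODE uniqueness forces any two such graphs through $y$ to coincide near $y$. This holds for arbitrary smooth distributions and is exactly the missing lemma behind your atlas-consistency step (it is also what is silently used in (iii), where the component of $\psi^{-1}(\mathcal N')$ must contain $\mathcal L$, and in (iv), where you need $\mathcal N'\cap\mathcal L'$ open in $\mathcal N'$). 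In the integrable case --- the only one the paper ever uses, and the one your final parentheses address --- your own remark that every plaque image is an open subset of the unique leaf of $D$ through $y$ is the correct and complete argument, and with that substituted for the connectedness sketch your proofs of (i), (iii) and (iv) go through essentially as written.
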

\begin{proof}Assertion (i) is immediate from the definitions of a leaf and 
projectability, while (i) implies (ii) since integrability amounts to the 
existence of an integral manifold through every point. Remark~\ref{covpr}(b) 
combined with (i) yields (iii), and (iv) follows.
\end{proof}
\begin{lemma}\label{opdns}{\smallit 
Suppose that\/ $\,F\,$ is a mapping from a 
manifold\/ $\,\mathcal{W}$ into any set. If \,for every\/ 
$\,x\in\mathcal{W}\,$ there exists a dif\-feo\-mor\-phic identification of a 
neighborhood\/ $\,\mathcal{B}\nnh_x\w$ of\/ $\,x\,$ in\/ $\,\mathcal{W}\,$ 
with a unit open Euclidean ball centered at\/ $\,0\,$ under which\/ $\,x\,$ 
corresponds to\/ $\,0\,$ and\/ $\,F\hs$ becomes constant on each open 
straight-line interval of length\/ $\,1\,$ in the open ball having\/ $\,0\,$ 
as an endpoint, then\/ $\,F\,$ is locally constant on some open dense subset 
of\/ $\,\mathcal{W}\nh$.
}
\end{lemma}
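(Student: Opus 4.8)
The plan is to prove that the set $U\subseteq\mathcal{W}$ of points at which $F$ is locally constant — which is open by its very definition — is dense, and to do so by induction on $n=\dim\mathcal{W}$. Since the neighborhoods $\mathcal{B}_x$ cover $\mathcal{W}$, it is enough to show that $U$ is dense in each $\mathcal{B}_x$. Fixing $x$ and using the given identification $\mathcal{B}_x\cong B$ with the open unit ball, under which $x$ corresponds to $0$, the hypothesis says exactly that $F=f\circ\pi$ on $B\smallsetminus\{0\}$, where $\pi:B\smallsetminus\{0\}\to S^{n-1}$ is the radial retraction $y\mapsto y/|y|$ and $f$ is the function on $S^{n-1}$ obtained by evaluating $F$ at any point of the corresponding radius. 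Because $\pi$ is a continuous open surjection and $B\smallsetminus\{0\}$ is dense in $B$, density of $U$ in $B$ follows as soon as $f$ is locally constant on a dense subset of $S^{n-1}$; and that last assertion is precisely the conclusion of the lemma for the lower-dimensional manifold $S^{n-1}$ and the function $f$. So the induction will close — with $n=0$, where $\mathcal{W}$ is discrete, as the trivial base — provided we check that $f$ again satisfies the hypothesis of the lemma.

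To check this, fix $v_0\in S^{n-1}$ and put $z=\tfrac12v_0\in B$. Applying the hypothesis of the lemma at $z$ and then shrinking, we obtain a diffeomorphism $\phi$ of some neighborhood of $z$, contained in $B\smallsetminus\{0\}$, onto a ball in $\bbR^n$ centered at $0=\phi(z)$, with $F$ constant along the $\phi$-preimages of the radii of that ball. Since $F=f\circ\pi$ on the domain in question, the composite $\rho:=\pi\circ\phi^{-1}$ satisfies $F\circ\phi^{-1}=f\circ\rho$, so $f\circ\rho$ is constant on every radius of that ball. Now $\rho$ is a submersion onto an open neighborhood of $v_0$ in $S^{n-1}$, with $\rho(0)=v_0$, and its fibres — the $\phi$-images of the rays of $\mathcal{B}_x$ — are one-dimensional. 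Pick a linear hyperplane $H\subseteq\bbR^n$ complementary to $\ker d\rho_0$. For $\varepsilon$ small the flat disk $H_\varepsilon\subseteq H$ of radius $\varepsilon$ about $0$ is transverse to the fibres of $\rho$, so $\rho|_{H_\varepsilon}$ is a local diffeomorphism and, after a further shrinking of $\varepsilon$, a diffeomorphism onto an open neighborhood $W$ of $v_0$. The radii of the flat disk $H_\varepsilon$ are initial segments of radii of $\phi$'s ball, so $f\circ\rho$ is constant on each of them; composing $\rho|_{H_\varepsilon}$ with a linear isomorphism of the open unit ball of $\bbR^{n-1}$ onto $H_\varepsilon$ therefore yields a diffeomorphism of that unit ball onto $W$, taking $0$ to $v_0$ and the radii to arcs along which $f$ is constant. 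Thus $f$ satisfies the hypothesis at $v_0$; as $v_0\in S^{n-1}$ was arbitrary, it satisfies the hypothesis everywhere on $S^{n-1}$, and the induction is complete.

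The argument is essentially formal once the right objects are in hand; the one step I would single out as the crux is this last one — showing that the ``cone structure'' of $F$ at the auxiliary point $z=\tfrac12v_0$ descends to a cone structure for $f$ at $v_0$. What makes it painless is the choice of a \emph{flat} transversal $H_\varepsilon$ to the rays of $\mathcal{B}_x$ inside $\phi$'s chart at $z$ — rather than, say, a small sphere about $z$ in that chart — since a flat disk through the centre of $\phi$'s ball contains honest radii, so the property ``$F$ is constant on radii'' restricts to it verbatim and is then transported isomorphically onto a neighborhood of $v_0$ by $\rho=\pi\circ\phi^{-1}$. The remaining points are routine: that $U$ is open; that $\pi$ is an open map with dense domain; that transversality of $H_\varepsilon$ to the fibres of $\rho$ survives for small $\varepsilon$ and makes $\rho|_{H_\varepsilon}$ a diffeomorphism onto its image; and, should one wish to handle it directly, that the case $n=1$ is immediate, since then $S^0$ is discrete, $f$ is automatically locally constant, and $F=f\circ\pi$ is already locally constant on the dense set $B\smallsetminus\{0\}$.
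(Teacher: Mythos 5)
Your proof is correct and follows essentially the same route as the paper's: induction on dimension, factoring $F$ through the radial projection onto the sphere, and verifying the hypothesis there by means of an auxiliary chart at a point off the center together with a codimension-one disk made of radial segments that the projection embeds into the sphere. Your flat transversal $H_\varepsilon$ is exactly the paper's codimension-one ball $\mathcal{B}'_{\!y}$ of radial intervals, so the two arguments differ only in the level of detail, not in substance.
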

\begin{proof}We use induction on $\,n=\dim\mathcal{W}\nh$. The case $\,n=1\,$ 
being trivial, let us assume the assertion to be valid in dimension $\,\nmo\,$ 
and consider a function $\,F\hs$ on an $\,n$-di\-men\-sion\-al manifold 
$\,\mathcal{W}\nh$, satisfying our hypothesis, along with an embedded open 
Euclidean ball 
$\,\mathcal{B}\nnh_x\w\nh\subseteq\mathcal{M}\,$ ``centered'' at a given point 
$\,x$, as in the statement of the lemma. Due to constancy of $\,F\nh$ along 
the fibres of the normalization projection 
$\,\mu:\mathcal{B}\nnh_x\w\nh\smallsetminus\{x\}\to\mathcal{S}$ onto the unit 
$\,(n\nh-\nh1)$-sphere $\,\mathcal{S}$, we may view $\,F\,$ as a mapping 
$\,G\,$ having the domain $\,\mathcal{S}$. We may now fix 
$\,y\in\mathcal{B}\nnh_x\w\nh\smallsetminus\{x\}\,$ with an embedded open 
Euclidean ball $\,\mathcal{B}\nnh_y\w$ ``centered'' at $\,y$, such that 
$\,F\hs$ is constant on each radial open interval in $\,\mathcal{B}\nnh_y\w$. 
The obvious sub\-mer\-sion property of $\,\mu\hs$ allows us to pass from 
$\,\mathcal{B}\nnh_y\w$ to a smaller concentric ball and then choose a 
co\-di\-men\-sion-one open Euclidean ball $\,\mathcal{B}'_{\!y}$ arising as a 
union of radial intervals within this smaller version of 
$\,\mathcal{B}\nnh_y\w$, for which $\,\mu:\mathcal{B}'_{\!y}\to\mathcal{S}\,$ 
is an embedding. The assumption of the lemma thus holds when 
$\,\mathcal{W}\hs$ and $\,F\hs$ are replaced by $\,\mathcal{S}\,$ and $\,G$, 
leading to local constancy of $\,G\,$ (and $\,F$) on a dense open set in 
$\,\mathcal{S}$ (and, respectively, in 
$\,\mathcal{B}\nnh_x\w\nh\smallsetminus\{x\}$). Since the union of the latter 
sets over all $\,x\,$ is obviously dense in $\,\mathcal{W}\nh$, our claim 
follows.
\end{proof}
\begin{remark}\label{nrexp}As a well-known consequence of the inverse mapping 
theorem combined with the Gauss lemma for sub\-man\-i\-folds, given a compact 
sub\-man\-i\-fold $\,\mathcal{M}\hn'$ of a Riemannian manifold 
$\,\mathcal{M}\,$ there exists $\,\rd\in(0,\infty)\,$ with the following 
properties.
\begin{enumerate}
  \def\theenumi{{\rm\alph{enumi}}}
\item The normal exponential mapping restricted to the 
radius $\,\rd\,$ open-disk sub\-bun\-dle $\,\mathcal{N}\hskip-3pt_\rd\w$ of 
the normal bundle of $\,\mathcal{M}\hn'$ constitutes a dif\-feo\-mor\-phism 
$\,\mathrm{Exp}^\perp\nnh:\mathcal{N}\hskip-3pt_\rd\w
\to\nh\mathcal{M}\nh_\rd\w$ 
onto the open sub\-man\-i\-fold $\,\mathcal{M}\nh_\rd\w$ of $\,\mathcal{M}\,$
equal to the pre\-im\-age of $\,[\hs0,\rd)\,$ under the function 
$\,\mathrm{dist}\hh(\mathcal{M}\hn'\nh,\,\cdot\,)\,$ of metric distance from 
$\,\mathcal{M}\hn'\nnh$.
\item Every $\,x\in\mathcal{M}\nh_\rd\w$ has a unique point 
$\,y\in\mathcal{M}\hn'$ nearest to $\,x$, which is simultaneously the unique 
point $\,y\,$ of $\,\mathcal{M}\hn'$ joined to $\,x\,$ by a geodesic in 
$\,\mathcal{M}\nh_\rd\w$ normal to $\,\mathcal{M}\hn'$ at $\,y$, and the 
resulting assignment $\,\mathcal{M}\nh_\rd\w\ni x\mapsto y\in\mathcal{M}\hn'$ 
coincides with the composite mapping of the inverse dif\-feo\-mor\-phism of 
$\,\mathrm{Exp}^\perp\nnh:\mathcal{N}\hskip-3pt_\rd\w
\to\nh\mathcal{M}\nh_\rd\w$ 
followed by the nor\-mal-bun\-dle projection 
$\,\mathcal{N}\hskip-3pt_\rd\w\to\nh\mathcal{M}\hn'\nnh$.
\item The $\,\mathrm{Exp}^\perp$ images of length $\,\rd\,$ radial line 
segments emanating from the zero vectors in the fibres of 
$\,\mathcal{N}\hskip-3pt_\rd\w$ coincide with the length $\,\rd\,$ minimizing 
geodesic segments in $\,\mathcal{M}\nh_\rd\w$ emanating from 
$\,\mathcal{M}\hn'\nnh$. They are all normal to the levels of 
$\,\mathrm{dist}\hh(\mathcal{M}\hn'\nh,\,\cdot\,)$, and realize the minimum 
distance between any two such levels within $\,\mathcal{M}\nh_\rd\w$.
\end{enumerate}
\end{remark}
\begin{lemma}\label{baire}{\smallit 
In a complete metric space, any countable union of closed sets with empty 
interiors has an empty interior.
}
\end{lemma}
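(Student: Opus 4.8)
The plan is to prove the equivalent statement that, in a complete metric space $\,X$, the complement of any countable union of closed sets with empty interiors is dense; this is equivalent because a subset of $\,X\,$ has empty interior precisely when its complement is dense in $\,X$. Writing the given union as $\,\bigcup_{n\ge1}F_n\,$ with each $\,F_n\,$ closed and nowhere dense, it then suffices to show that for every nonempty open set $\,U\subseteq X\,$ the intersection $\,U\cap\bigcap_{n\ge1}(X\smallsetminus F_n)\,$ is nonempty.

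First I would fix such a $\,U\,$ and construct, by induction on $\,n$, a nested sequence of closed metric balls $\,\overline B(x_n,r_n)\,$ with $\,0<r_n<2^{-n}$, arranged so that $\,\overline B(x_1,r_1)\subseteq U$, each $\,\overline B(x_{n+1},r_{n+1})\,$ is contained in the open ball $\,B(x_n,r_n)$, and $\,\overline B(x_n,r_n)\cap F_n=\emptyset\,$ for every $\,n$. The inductive step uses the fact that $\,X\smallsetminus F_n\,$ is open (since $\,F_n\,$ is closed) and dense (since $\,F_n\,$ has empty interior): the set $\,B(x_{n-1},r_{n-1})\cap(X\smallsetminus F_n)$, with $\,B(x_0,r_0)\,$ read as $\,U\,$ when $\,n=1$, is nonempty and open, hence contains a closed ball of positive radius less than $\,2^{-n}$, which we take to be $\,\overline B(x_n,r_n)$.

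Next I would observe that the centers form a Cauchy sequence: for $\,m\ge n\,$ nestedness gives $\,x_m\in\overline B(x_n,r_n)$, so $\,d(x_n,x_m)\le r_n<2^{-n}$. Completeness of $\,X\,$ then yields a limit $\,x=\lim_n x_n$. Since each $\,\overline B(x_n,r_n)\,$ is closed and contains the tail $\,\{x_m:m\ge n\}$, it contains $\,x$; hence $\,x\in\overline B(x_1,r_1)\subseteq U\,$ and $\,x\notin F_n\,$ for all $\,n$, so $\,x\in U\cap\bigcap_{n\ge1}(X\smallsetminus F_n)$, as required.

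There is no serious obstacle here — this is the classical Baire category theorem — and the only point demanding a little care is the inductive construction, where at each stage one must stay inside the previously chosen open ball while simultaneously avoiding $\,F_n$, which is exactly what openness together with density of $\,X\smallsetminus F_n\,$ supplies, together with the bookkeeping $\,r_n<2^{-n}\,$ that forces the limit point into every ball of the sequence.
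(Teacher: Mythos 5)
Your argument is correct: it is the standard nested-closed-balls proof of the Baire category theorem, and every step (the reduction to denseness of the complement of the union, the inductive choice of $\overline B(x_n,r_n)$ inside the previous open ball and disjoint from $F_n$, the Cauchy estimate $d(x_n,x_m)\le r_n<2^{-n}$, and the use of completeness to produce a point of $U$ avoiding all $F_n$) is carried out properly. The paper takes a different, much shorter route: it does not prove the statement at all, but merely observes that it is equivalent to the usual formulation of Baire's theorem (countable intersections of dense open sets are dense) and cites Edwards for that result. So the comparison is one of self-containedness versus economy: your version makes the lemma independent of outside references, at the cost of reproducing a classical textbook argument, while the paper's version simply records the equivalence -- complements of closed sets with empty interior are exactly the dense open sets -- and delegates the analytic content to the literature. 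Either is acceptable here, since the lemma is used only as a black box (in the proof of Lemma~\ref{cmmut}).
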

\begin{proof}This is Baire's theorem \cite[p.\ 187]{edwards} stating, 
equivalently, that the intersection of countably many dense open subsets 
is dense.
\end{proof}

\section{Free Abel\-i\-an groups}\label{fa}
The following well-known facts, cf.\ \cite{arnold}, are gathered here for easy 
reference.

For a finitely generated Abel\-i\-an group $\,G$, being tor\-sion-free amounts 
to being free, in the sense of having a $\,\bbZ\nh$-ba\-sis, by which one 
means an ordered $\,n$-tuple $\,e_1\w,\dots,e_n\w$ of elements of $\,G\,$ such 
that every $\,x\in G\,$ can be uniquely expressed as an integer combination of 
$\,e_1\w,\dots,e_n\w$. The integer $\,n\ge0$, also denoted by $\,\dimz G$, 
is an algebraic invariant of $\,G$, called its {\smallit Bet\-ti number\/} or 
$\,\bbZ${\smallit-di\-men\-sion}.

Any finitely generated Abel\-i\-an group $\,G\,$ is isomorphic to the direct 
sum of its (necessarily finite) torsion sub\-group $\,S\,$ and the free group 
$\,G/S$, and we set $\,\dimz G=\dimz\hskip2pt[G/S]$. A sub\-group $\,G'$ (or, 
a homo\-mor\-phic image $\,G'$) of such $\,G$, in addition to being again 
finitely generated and Abel\-i\-an, also satisfies the inequality 
$\,\dimz G'\le\dimz G$, strict unless $\,G/G'$ is finite (or, 
respectively, the homo\-mor\-phism involved has a finite kernel).
\begin{lemma}\label{dirsm}{\smallit 
A sub\-group\/ $\,G'$ of finitely generated a free Abel\-i\-an group\/ $\,G\,$ 
constitutes a direct summand of\/ $\,G\,$ if and only if the quotient group\/ 
$\,G/G'$ is tor\-sion-free.
}
\end{lemma}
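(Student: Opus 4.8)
The plan is to prove both implications of Lemma~\ref{dirsm}. The forward direction is the easy one: if $\,G=G'\oplus G''$ for some sub\-group $\,G''$, then the quotient $\,G/G'$ is isomorphic to $\,G''$, which -- being a sub\-group of the free (hence tor\-sion-free) group $\,G$ -- is itself tor\-sion-free, so $\,G/G'$ is tor\-sion-free. This requires nothing beyond the first isomorphism theorem and the observation that sub\-groups of tor\-sion-free groups are tor\-sion-free.

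For the converse, I would argue as follows. Suppose $\,G/G'$ is tor\-sion-free. Since $\,G\,$ is finitely generated, so is the quotient $\,G/G'$, and a finitely generated tor\-sion-free Abel\-i\-an group is free (this is recorded in Section~\ref{fa}). Let $\,\bar e_1\w,\dots,\bar e_m\w$ be a $\,\bbZ$-ba\-sis of $\,G/G'$, and pick pre\-im\-ages $\,e_1\w,\dots,e_m\w$ in $\,G\,$ under the quotient projection $\,\pi:G\to G/G'$. The plan is to show that $\,G''\nh:=\spr\ldots$ -- more precisely, the sub\-group $\,G''$ of $\,G\,$ generated by $\,e_1\w,\dots,e_m\w$ -- is a complement to $\,G'$. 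First, $\,G'\cap G''=\{0\}$: any element of $\,G''$ is an integer combination $\,\sum_j c_j e_j\w$, and if it lies in $\,G'=\ker\pi\,$ then $\,\sum_j c_j\bar e_j\w=0\,$ in $\,G/G'$, forcing all $\,c_j=0\,$ by linear independence of the $\,\bar e_j\w$. Second, $\,G'+G''=G$: given $\,x\in G$, write $\,\pi(x)=\sum_j c_j\bar e_j\w$ with $\,c_j\in\bbZ$; then $\,x-\sum_j c_j e_j\w\in\ker\pi=G'$, so $\,x\in G'+G''$. Hence $\,G=G'\oplus G''$.

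The main (and only mildly delicate) point is the appeal to the structure theorem asserting that a \emph{finitely generated} tor\-sion-free Abel\-i\-an group is free, which is exactly what supplies the $\,\bbZ$-ba\-sis $\,\bar e_1\w,\dots,\bar e_m\w$ needed to define the splitting; this is precisely the fact stated (for reference) at the beginning of Section~\ref{fa}. Everything else is a routine diagram chase with the quotient projection. I do not anticipate any real obstacle; it is worth noting only that finite generation of $\,G\,$ cannot be dropped, since it is what guarantees $\,G/G'$ is finitely generated and thus makes the structure theorem applicable.
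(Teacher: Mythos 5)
Your proof is correct and follows essentially the same route as the paper, which handles the nontrivial direction via the remark after the lemma: lift a $\,\bbZ$-basis of the free quotient $\,G/G'$ (free because it is finitely generated and torsion-free) and observe that the lifts split the quotient projection. Your version verifies $\,G'\cap G''=\{0\}\,$ and $\,G'+G''=G\,$ directly rather than assembling a basis of $\,G\,$ from bases of $\,G'$ and $\,G/G'\nnh$, but this is only a cosmetic difference (and note your stray ``$\mathrm{span}_\bbR$'' is, as you say, really the subgroup generated by the lifts).
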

In fact, more generally, given a surjective homo\-mor\-phism 
$\,\chi:P\to P'$ between Abel\-i\-an groups $\,P,P'$ and elements 
$\,x\nh_j\w,y_a\w$ (with $\,j,a\,$ ranging over finite sets), such that 
$\,x\nh_j\w$ and $\,\chi(y_a\w)$ form $\,\bbZ\nh$-ba\-ses of 
$\,\mathrm{Ker}\hskip2.7pt\chi\,$ and, respectively, of $\,P'\nnh$, the system 
consisting of all $\,x\nh_j\w$ and $\,y_a\w\,$ is a $\,\bbZ\nh$-ba\-sis of 
$\,P\nh$. To see this, note that every element of $\,P\,$ then can be 
uniquely expressed as an integer combination of $\,x\nh_j\w$ and $\,y_a\w$.
\begin{lemma}\label{surjc}{\smallit 
For any finitely generated sub\-group\/ $\,G\,$ of the additive group of a 
fi\-nite-di\-men\-sion\-al real vector space\/ $\,\tvs\nnh$, the 
intersection\/ $\,G\cap\tvs\hh'$ with any vector sub\-space\/ 
$\,\tvs\hh'\subseteq\tvs\hs$ forms a di\-rect-sum\-mand sub\-group of\/ 
$\,G$. On the other hand, the class of di\-rect-sum\-mand sub\-groups of\/ 
$\,G\,$ is closed under intersections, finite or not.
}
\end{lemma}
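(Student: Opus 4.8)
The statement has two parts. First, $G \cap \tvs\hh'$ is a direct summand of $G$. Second, the class of direct-summand subgroups of $G$ is closed under arbitrary intersections.

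For the first part, I would invoke Lemma \ref{dirsm}: since $G$ is finitely generated and torsion-free (being a subgroup of a real vector space), it is free Abelian, and a subgroup $G'$ is a direct summand iff $G/G'$ is torsion-free. So it suffices to show $G/(G\cap\tvs\hh')$ is torsion-free. Suppose $m \cdot g \in G\cap\tvs\hh'$ for some $g \in G$ and positive integer $m$; then $m \cdot g \in \tvs\hh'$, and since $\tvs\hh'$ is a real vector space (hence divisible and torsion-free as a group, in particular closed under division by $m$), $g = \frac1m(mg) \in \tvs\hh'$, so $g \in G\cap\tvs\hh'$. This shows the quotient is torsion-free, completing the first part.

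For the second part, let $\{G_\lambda\}_{\lambda\in\Lambda}$ be an arbitrary family of direct-summand subgroups of $G$ and set $G'' = \bigcap_\lambda G_\lambda$. Again by Lemma \ref{dirsm} it suffices to show $G/G''$ is torsion-free. Suppose $g \in G$ and $m\cdot g \in G''$ for some positive integer $m$. Then for each $\lambda$, $m \cdot g \in G_\lambda$; since $G/G_\lambda$ is torsion-free (as $G_\lambda$ is a direct summand), this forces $g \in G_\lambda$. As this holds for every $\lambda$, we get $g \in G''$, so $G/G''$ is torsion-free, and $G''$ is a direct summand. (One subtlety worth a remark: a priori $\Lambda$ may be infinite, but $G$ is Noetherian as a finitely generated Abelian group, so the intersection equals a finite sub-intersection; this is not even needed for the torsion-freeness argument, but it guarantees $G''$ is again finitely generated, which is implicit in applying Lemma \ref{dirsm}.)

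The two arguments are structurally identical — both reduce, via Lemma \ref{dirsm}, to checking that a quotient is torsion-free, and in each case the check is a one-line divisibility argument. There is no real obstacle here; the only thing to be careful about is making explicit why the relevant quotient groups are finitely generated (inherited from $G$) so that Lemma \ref{dirsm} applies, and, in the first part, why membership in a real subspace is preserved under division by integers.
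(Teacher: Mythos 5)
Your proposal is correct and follows the paper's approach exactly: the paper simply declares both claims obvious from Lemma~\ref{dirsm}, and your argument fills in precisely the intended torsion-freeness checks (divisibility of the real subspace for the first claim, torsion-freeness of each quotient $G/G_\lambda$ for the second).
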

Both claims are obvious from Lemma~\ref{dirsm}. The next lemma is a 
straightforward exercise:
\begin{lemma}\label{prdsg}{\smallit 
If normal sub\-groups\/ $\,G'\nh,G''$ of a group\/ $\,G\,$ intersect trivially 
and every\/ $\,\gamma'\in G'$ commutes with every\/ $\,\gamma''\in G''\nnh$, 
then\/ 
$\,G'G''\nh=\{\gamma'\gamma'':(\gamma'\nnh,\gamma'')\in G'\nh\times G''\}\,$ 
is a normal sub\-group\/ of\/ $\,G$, and the assignment\/ 
$\,(\gamma'\nnh,\gamma'')\mapsto\gamma'\gamma''$ defines an iso\-mor\-phism\/ 
$\,G'\nh\times G''\nh\to G'G''\nnh$.
}
\end{lemma}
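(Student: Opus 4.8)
The plan is to verify, in turn, the three assertions of the lemma: that $\,G'G''\,$ is a sub\-group of $\,G$, that it is normal in $\,G$, and that $\,(\gamma'\nnh,\gamma'')\mapsto\gamma'\gamma''$ is an iso\-mor\-phism onto it. Each of these follows directly from the two hypotheses — the trivial intersection $\,G'\nh\cap G''\nh=\{\mathrm{id}\}$, and the fact that every element of $\,G'$ commutes with every element of $\,G''$ — with the two hypotheses entering at clearly separated points of the argument, so that no step presents a genuine obstacle; the whole proof is bookkeeping of rearrangements licensed by commutativity.

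First I would check that the assignment $\,\phi:G'\nh\times G''\nh\to G$, $\,\phi(\gamma'\nnh,\gamma'')=\gamma'\gamma''\nnh$, is a homo\-mor\-phism. For factors $\,(\gamma_1'\nnh,\gamma_1'')$ and $\,(\gamma_2'\nnh,\gamma_2'')$ one has $\,\phi(\gamma_1'\gamma_2'\nnh,\gamma_1''\gamma_2'')=\gamma_1'\gamma_2'\gamma_1''\gamma_2''\nnh$, and replacing $\,\gamma_2'\gamma_1''$ by $\,\gamma_1''\gamma_2'$ (the commutativity hypothesis) turns this into $\,\gamma_1'\gamma_1''\gamma_2'\gamma_2''\nh=\phi(\gamma_1'\nnh,\gamma_1'')\hs\phi(\gamma_2'\nnh,\gamma_2'')$. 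In particular the image of $\,\phi$, which is by definition the set $\,G'G''\nnh$, is closed under multiplication; it contains $\,\mathrm{id}$, and is closed under inverses, since $\,(\gamma'\gamma'')^{-1}\nh=(\gamma'')^{-1}(\gamma')^{-1}\nh=(\gamma')^{-1}(\gamma'')^{-1}\nh\in G'G''\nnh$, again by commutativity. Hence $\,G'G''$ is a sub\-group of $\,G$.

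Next I would establish normality: for any $\,g\in G$ and any $\,\gamma'\in G'\nnh,\ \gamma''\in G''\nnh$, we have $\,g(\gamma'\gamma'')g^{-1}\nh=(g\gamma'g^{-1})(g\gamma''g^{-1})$, which lies in $\,G'G''$ because $\,G'$ and $\,G''$ are each normal in $\,G$; thus $\,gG'G''g^{-1}\nh\subseteq G'G''$ for every $\,g$, so $\,G'G''$ is normal. Finally, for the iso\-mor\-phism claim it remains to see that $\,\phi$ is a bijection onto $\,G'G''\nnh$: surjectivity holds by the very definition of $\,G'G''$ as the image of $\,\phi$, while for injectivity it suffices to note that $\,\mathrm{Ker}\,\phi$ is trivial — indeed $\,\gamma'\gamma''\nh=\mathrm{id}$ gives $\,\gamma'\nh=(\gamma'')^{-1}\nh\in G'\nh\cap G''\nh=\{\mathrm{id}\}$, forcing $\,\gamma'\nh=\gamma''\nh=\mathrm{id}$. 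This is the one place the trivial-intersection hypothesis is used, and it completes the proof.
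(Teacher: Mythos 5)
Your proof is correct: the homomorphism property and closure under inverses use exactly the commutation hypothesis, normality of $\,G'G''$ uses only normality of $\,G'$ and $\,G''\nnh$, and injectivity of $\,(\gamma'\nnh,\gamma'')\mapsto\gamma'\gamma''$ uses exactly the trivial-intersection hypothesis. The paper gives no proof at all -- it labels Lemma~\ref{prdsg} a straightforward exercise -- and your argument is precisely the standard verification that fills in that exercise.
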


\section{Lattices and vector sub\-spaces}\label{lv}
Throughout this section $\,\tvs\hs$ denotes a fixed fi\-nite-di\-men\-sion\-al 
real vector space, and
\begin{equation}\label{cpl}
\mathrm{we\ call\ sub\-spaces\ }\,\tvs\hh'\nnh,\tvs\hh''\mathrm{\ of\ 
}\,\tvs\,\text{\smallit\ complementary to each other}\,\mathrm{\ if\ 
}\,\tvs\,=\,\tvs\hh'\nh\oplus\tvs\hh''\nh.
\end{equation}
As usual, we define a (full) {\smallit lattice\/} in $\,\tvs\hs$ to be any 
sub\-group $\,L\,$ of the additive group of $\,\tvs\hs$ generated by a basis 
of $\,\tvs\hs$ (which must consequently also be a $\,\bbZ\nh$-ba\-sis of 
$\,L$).
The quotient Lie group $\,\tvs\nnh/\hskip-1ptL\,$ then is a torus, 
and we use the term {\smallit sub\-to\-ri\/} when referring to its compact 
connected Lie sub\-groups. Pro\-ject\-a\-bil\-i\-ty of distributions under 
overing projections is generally equivalent to their deck-trans\-for\-ma\-tion 
invariance; this obvious fact, applied to the projection 
$\,\tvs\nh\to\tvs\nnh/\hskip-1ptL$, 
shows that
\begin{equation}\label{tor}
\mathrm{every\ parallel\ distribution\ on\ }\,\tvs\hs\mathrm{\ is\ 
pro\-ject\-a\-ble\ onto\ the\ torus\ }\,\tvs\nnh/\hskip-1ptL.
\end{equation}
\begin{definition}\label{lsbsp}Given a lattice $\,L\,$ in $\,\tvs\nnh$, by an 
$\,L${\smallit-sub\-space\/} of $\,\tvs$ we will mean any vector sub\-space 
$\,\tvs\hh'$ of $\,\tvs\hs$ spanned by $\,L\cap\tvs\hh'\nnh$. One may 
equivalently require $\,\tvs\hh'$ to be the span of just a subset of $\,L$, 
rather than specifically of $\,L\cap\tvs\hh'\nnh$.
\end{definition}
\begin{lemma}\label{cplvs}{\smallit 
The parallel distribution on\/ $\,\tvs\hs$ tangent to 
any prescribed vector sub\-space\/ $\,\tvs\hh'$ projects onto 
a parallel distribution\/ $\,D\hs$ on the torus group\/ 
$\,\tvs\nnh/\hskip-1ptL$. The leaves of\/ $\,D\,$ must be 
either all compact, or all noncompact, and they are compact if and only if\/ 
$\,\tvs\hh'$ is an\/ $\,L$-sub\-space, in which case the leaf of\/ $\,D\,$ 
through zero is a sub\-to\-rus of\/ $\,\tvs\nnh/\nh L$.
}
\end{lemma}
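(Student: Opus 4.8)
The plan is to describe the leaves of $\,D\,$ explicitly as $\,\pi$-images of the affine translates of $\,\tvs\hh'\nnh$, where $\,\pi:\tvs\to\tvs\nnh/\hskip-1ptL\,$ is the covering projection, and then to read off the assertions from lattice geometry. To begin with, the parallel distribution on $\,\tvs\hs$ tangent to $\,\tvs\hh'$ --- the one equal to $\,\tvs\hh'$ at every point under the canonical identifications $\,T_v\tvs=\tvs$ --- projects onto a distribution $\,D\,$ on the torus by (\ref{tor}), and $\,D\,$ is parallel since $\,\pi\,$ is a local isometry. The leaves of the distribution on $\,\tvs\hs$ being the affine sub\-spaces $\,v+\tvs\hh'$, I would next verify that the leaf of $\,D\,$ through $\,\pi(v)\,$ is exactly $\,\pi(v+\tvs\hh')=\pi(v)+\pi(\tvs\hh')$: this image is a connected integral manifold of $\,D\,$ by Lemma~\ref{cptlv}(i), hence lies in the leaf through $\,\pi(v)$, while any $\,D$-tangent path issuing from $\,\pi(v)\,$ lifts through $\,\pi\,$ to a $\,\tvs\hh'$-tangent path issuing from $\,v$, which therefore stays inside $\,v+\tvs\hh'\nnh$.

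Consequently every leaf of $\,D\,$ is the image, under a torus translation, of the leaf $\,\mathcal{L}=\pi(\tvs\hh')\,$ through $\,0$, and torus translations are dif\-feo\-mor\-phisms of $\,\tvs\nnh/\hskip-1ptL\,$ carrying $\,D\,$ onto itself; hence all leaves of $\,D\,$ are mutually dif\-feo\-mor\-phic, and in particular all compact or all noncompact. Everything then reduces to $\,\mathcal{L}$. Being the image of the sub\-group $\,\tvs\hh'$ under the homo\-mor\-phism $\,\pi$, $\,\mathcal{L}\,$ is a connected sub\-group of the torus, and $\,\pi\,$ restricts to a surjection $\,\tvs\hh'\to\mathcal{L}\,$ whose kernel is $\,\tvs\hh'\cap L$, discrete as a sub\-group of the lattice $\,L$. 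If $\,\tvs\hh'$ is an $\,L$-sub\-space, then this kernel spans $\,\tvs\hh'$, so --- being discrete --- it is a full lattice in $\,\tvs\hh'$; by Lemma~\ref{surjc} it is a di\-rect sum\-mand of $\,L$, and extending a $\,\bbZ\nh$-ba\-sis of it to one of $\,L\,$ yields an $\,\bbR\nh$-ba\-sis of $\,\tvs\hs$ in which $\,\tvs\hh'$ is a coordinate sub\-space and $\,L=\bbZ^n$; in these coordinates $\,\mathcal{L}\,$ is plainly a coordinate sub\-to\-rus of $\,\tvs\nnh/\hskip-1ptL$, and in particular compact.

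For the converse, assume $\,\mathcal{L}\,$ compact. Our conventions then make $\,\mathcal{L}\,$ an embedded sub\-man\-i\-fold, so $\,\pi|_{\tvs\hh'}$, whose image is $\,\mathcal{L}$, is smooth as a map $\,\tvs\hh'\to\mathcal{L}$, and it is a local dif\-feo\-mor\-phism onto $\,\mathcal{L}\,$ since its differential at each $\,v\,$ carries $\,\tvs\hh'$ iso\-mor\-phically onto $\,D_{\pi(v)}=T_{\pi(v)}\mathcal{L}$. Being surjective with fibres the orbits of the (properly discontinuous) translation action of $\,\tvs\hh'\cap L\,$ on $\,\tvs\hh'$, it descends to a dif\-feo\-mor\-phism $\,\tvs\hh'/(\tvs\hh'\cap L)\to\mathcal{L}$; hence $\,\tvs\hh'/(\tvs\hh'\cap L)\,$ is compact, which forces $\,\tvs\hh'\cap L\,$ to be a full lattice in $\,\tvs\hh'$, i.e.\ $\,\tvs\hh'=\spr(\tvs\hh'\cap L)\,$ to be an $\,L$-sub\-space. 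The step requiring the most care is precisely this last identification --- pinning down the intrinsic smooth structure of the a priori merely immersed leaf $\,\mathcal{L}$; the remainder is routine lattice arithmetic together with the standard behaviour of parallel distributions under covering maps.
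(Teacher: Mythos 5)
Your proof is correct, and it follows the paper in its first two steps (projectability via (\ref{tor}); all leaves compact or all noncompact because they are translation images of the leaf $\,\mathcal{L}=\pi(\tvs\hh')\,$ through zero) and in the forward implication, where your basis-extension argument is just an explicit version of the paper's ``$\,\mathcal{L}\,$ is a factor of a product-of-tori decomposition of $\,\tvs\nnh/\hskip-1ptL$''. Where you genuinely diverge is the converse. The paper argues contrapositively: if $\,\tvs\hh'$ is not an $\,L$-subspace, a nonzero linear functional on $\,\tvs\hh'$ vanishing on $\,L\cap\tvs\hh'$ descends to an unbounded continuous function on the leaf, which therefore cannot be compact; this sidesteps any discussion of the leaf's topology or of quotient manifolds. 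You instead argue directly: compactness forces the leaf to be embedded (by the paper's convention that compact submanifolds carry the subset topology), $\,\pi|_{\tvs\hh'}$ then descends to a diffeomorphism $\,\tvs\hh'/(\tvs\hh'\cap L)\to\mathcal{L}$, and a discrete subgroup of $\,\tvs\hh'$ with compact quotient must span, i.e.\ $\,\tvs\hh'\nh=\spr(\tvs\hh'\cap L)$. That last step is the one place you assert rather than prove a fact: it is standard (it is the cocompact counterpart of Remark~\ref{lttce}(a), and can itself be proved exactly by the paper's unbounded-functional trick, or by the classification of discrete subgroups of $\,\bbR\nh^k$), so I would not call it a gap, but be aware it is doing the real work. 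What your route buys is the explicit identification of every leaf as $\,\pi(v+\tvs\hh')\,$ and of $\,\mathcal{L}\,$ with $\,\tvs\hh'/(\tvs\hh'\cap L)$ -- facts the paper uses later anyway (cf.\ Lemma~\ref{ratss} and Lemma~\ref{cptlv}) -- at the cost of the extra embeddedness/quotient discussion; the paper's contrapositive is shorter and needs only continuity of the descended functional on the leaf.
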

\begin{proof}Pro\-ject\-a\-bil\-i\-ty is obvious from (\ref{tor}). The first 
claim about the leaves of $\,D$ follows as the leaves are one another's 
translation images. For the second, let $\,\mathcal{N}\hs$ be the leaf of 
$\,D\,$ through zero. Requiring $\,\tvs\hh'$ to be (or, not to be) an 
$\,L$-sub\-space makes $\,L\cap\tvs\hh'\nnh$, by Lemma~\ref{surjc}, a 
di\-rect-sum\-mand sub\-group of $\,L\,$ spanning $\,\tvs\hh'$ or, 
respectively, yields the existence of a nonzero linear functional $\,f\,$ on 
$\,\tvs\hh'\nnh$, the kernel of which contains $\,L\cap\tvs\hh'\nnh$. In the 
former case, $\,\mathcal{N}\hs$ is a factor of a prod\-uct-of-to\-ri 
decomposition of 
$\,\tvs\nnh/\hskip-1ptL$, while in the latter $\,f\,$ descends to 
an unbounded function on $\,\mathcal{N}\nnh$.
\end{proof}
\begin{example}\label{genrl}For a finite group $\,H\nnh$, let 
$\,\tvs\nh=\bbR\nnh^H$ and $\,L=\bbZ\nh^H\nnh$, with the convention that, 
whenever $\,X,Y\hs$ are sets, $\,Y\nnh^X\hs$ is the set of all mappings 
$\,X\hn\to Y\nnh$. Clearly,$\,\tvs\nh\cong\nh\rn$ and $\,L\cong\nh\bbZ^n$ if 
$\,H\,$ has $\,n\,$ elements, the isomorphic identifications $\,\cong\,$ 
coming from a fixed bijection $\,\{1,\dots,n\}\to H\nnh$, and so $\,L\,$ 
constitutes a lattice in the real vector space $\,\tvs\nh$. Denoting by 
$\,\tau\nh\nnh_a\w:H\to H\,$ the left translation by $\,a\in H\nnh$, we 
define a right action 
$\hs\tvs\hn\times\nh H\ni(f,a)\mapsto f\circ\tau\nh\nnh_a\w\nh\in\tvs\hs$ of 
the group $\,H\,$ on $\,\tvs\nnh$. This action -- obviously effective -- turns 
$\,H\,$ into a finite group of linear auto\-mor\-phisms of $\,\tvs\nnh$, 
preserving both $\,L\,$ and the $\,\ell\hh^2$ inner product. Any 
$\,d$-el\-e\-ment sub\-group $\,\tilde H\,$ of $\,H\,$ gives rise to two 
$\,H\nh$-in\-var\-i\-ant $\,L$-sub\-spaces 
$\,\tvs\hh'\nnh,\tvs\hh''\nnh\subseteq\tvs\nnh$, of dimensions $\,n/d\,$ and 
$\,n-n/d$, with $\,\tvs\hh'$ consisting of all $\,f\in\tvs$ constant on 
each left coset $\,a\tilde H\nnh$, $\,a\in H$, and $\,\tvs\hh''\nnh$, the 
$\,\ell\hh^2$-or\-thog\-o\-nal complement of $\,\tvs\hh'\nnh$, formed by those 
$\,f\in\tvs\hs$ having the sum of values over every coset $\,a\tilde H\,$ 
equal to zero. Thus, for the action of $\,\tilde H\,$ on $\,H\,$ via right 
translations, $\,\tvs\hh'$ (or $\,\tvs\hh''$) is the space of vectors in 
$\,\tvs\hs$ that are $\,\tilde H\nh$-in\-var\-i\-ant (or, have zero 
$\,\tilde H\nh$-av\-er\-age). Both $\,\tvs\hh'\nnh,\tvs\hh''$ are 
$\,L$-sub\-spaces: a subset of $\,L\,$ spanning $\,\tvs\hh'$ (or, 
$\,\tvs\hh''$ when $\,d>1$ and $\,\tvs\hh''\nh\ne\{0\}$) consists of functions 
equal to $\,1\,$ on one coset and to $\,0\,$ on the others (or, respectively, 
of functions assuming the values $\,1\,$ and $\,-\nnh1\,$ at two fixed points 
within the same coset, and vanishing everywhere else).
\end{example}
\begin{lemma}\label{spint}{\smallit 
Given a lattice\/ $\,L\,$ in\/ $\,\tvs\nnh$, the span and 
intersection of any family of\/ $\,L$-sub\-spaces are\/ $\,L$-sub\-spaces. The 
same is true if one replaces the phrase `$\nh L$-sub\-spaces' with\/ 
`$\nh H\nnh$-in\-var\-i\-ant $\,L$-sub\-spaces' for any fixed group\/ $\,H\hs$ 
of linear auto\-mor\-phisms of\/ $\,\tvs\hs$ sending\/ $\,L\,$ into itself.
}
\end{lemma}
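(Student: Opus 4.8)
The plan is to treat the two operations—span and intersection—separately, and in each case to reduce the $H$-invariant statement to the bare $L$-subspace statement plus an averaging remark. First I would dispose of spans. If $\{\tvs_i\}_{i\in I}$ is any family of $L$-subspaces, each $\tvs_i$ is by Definition~\ref{lsbsp} the span of some subset $S_i\subseteq L$; then $\sum_i\tvs_i$ is the span of $\bigcup_i S_i\subseteq L$, hence an $L$-subspace. (One should note that although the index set may be infinite, the sum $\sum_i\tvs_i$ is still a genuine vector subspace of the finite-dimensional $\tvs$, so only finitely many $\tvs_i$ actually contribute; this causes no trouble.) If moreover each $\tvs_i$ is $H$-invariant, then so is $\sum_i\tvs_i$, since each $h\in H$ is linear and permutes the summands' images: $h(\sum_i\tvs_i)=\sum_i h(\tvs_i)=\sum_i\tvs_i$.

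Next I would handle intersections, which is the part needing an actual argument rather than a one-line observation. Let $\tvs\hh'=\bigcap_{i\in I}\tvs_i$ with each $\tvs_i$ an $L$-subspace. The content is that $\tvs\hh'$ is spanned by $L\cap\tvs\hh'$. The clean route is via Lemma~\ref{surjc}: each $\tvs_i$ being an $L$-subspace means (by the proof pattern already used in Lemma~\ref{cplvs}) that $L\cap\tvs_i$ is a direct-summand subgroup of $L$ which spans $\tvs_i$; and the class of direct-summand subgroups of $L$ is closed under arbitrary intersections, again by Lemma~\ref{surjc}. Hence $\bigcap_i(L\cap\tvs_i)=L\cap\bigcap_i\tvs_i=L\cap\tvs\hh'$ is a direct summand of $L$. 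It remains to check that this subgroup spans $\tvs\hh'$. For this I would argue: a direct-summand subgroup $G$ of $L$ spans, inside $\tvs$, a subspace whose intersection with $L$ is exactly $G$ (write $L=G\oplus G''$ on the level of $\bbZ$-bases; the span of $G$ is a coordinate subspace). So $\spr(L\cap\tvs\hh')$ is an $L$-subspace $\tvs\hh''$ with $L\cap\tvs\hh''=L\cap\tvs\hh'$. Then $\tvs\hh''\subseteq\tvs\hh'$ trivially, while $\dim\tvs\hh''=\dimz(L\cap\tvs\hh'')=\dimz(L\cap\tvs\hh')$; and since $L\cap\tvs\hh'$ is a direct summand of $L$, one can also see directly that $L\cap\tvs\hh'$ spans $\tvs\hh'$—indeed, each $\tvs_i$ is spanned by $L\cap\tvs_i$, so picking a $\bbZ$-basis of $L$ adapted simultaneously is the cleanest finish, or one invokes that a finitely generated subgroup of $\tvs$ and its span have equal $\bbZ$-dimension and rank. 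The upshot is $\tvs\hh''=\tvs\hh'$, i.e.\ $\tvs\hh'$ is an $L$-subspace.

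For the $H$-invariant refinement of the intersection claim, I would simply note that if every $\tvs_i$ is $H$-invariant then $\tvs\hh'=\bigcap_i\tvs_i$ is visibly $H$-invariant (intersection of invariant subspaces), and it is an $L$-subspace by the paragraph above; since $H$ sends $L$ into itself it also sends $L\cap\tvs\hh'$ into itself, but this is automatic and needs no separate verification. So the $H$-invariant case is literally the $L$-subspace case with the trivial remark that invariance is inherited by spans and intersections.

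The main obstacle—really the only nontrivial point—is the step showing $L\cap\tvs\hh'$ spans $\tvs\hh'$ after one knows it is a direct summand of $L$. The temptation is to think this is immediate, but one must rule out the possibility that $\bigcap_i\tvs_i$ is strictly larger than the span of $\bigcap_i(L\cap\tvs_i)$; the safeguard is precisely that each $\tvs_i$ equals $\spr(L\cap\tvs_i)$, so choosing a $\bbZ$-basis $e_1,\dots,e_n$ of $L$ for which suitable sub-tuples are $\bbZ$-bases of the various $L\cap\tvs_i$ (possible since those are direct summands, though for infinitely many $i$ one should first reduce to the finitely many that are distinct, as there are only finitely many subspaces among the $\tvs_i$) forces $\tvs_i$ to be the corresponding coordinate subspace, whence the intersection of the $\tvs_i$ is the coordinate subspace on the common index set, which is exactly $\spr(\bigcap_i(L\cap\tvs_i))$. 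I would present this coordinate-basis normalization as the heart of the proof and let the rest follow as bookkeeping.
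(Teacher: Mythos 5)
The span half of your argument, and the remark that $H$-invariance is inherited by spans and intersections, are fine and agree with the paper. The gap is in what you yourself single out as the heart of the proof: the claim that, since the sublattices $L\cap\tvs_i$ are direct summands of $L$, one can pick a single $\bbZ$-basis of $L$ containing sub-tuples that are $\bbZ$-bases of all the (finitely many distinct) $L\cap\tvs_i$ simultaneously. A basis can be adapted to one direct summand, but not in general to several at once: take $L=\bbZ^2$ in $\tvs=\bbR^2$, $\tvs_1=\spr\{(1,0)\}$ and $\tvs_2=\spr\{(1,2)\}$. Both $L\cap\tvs_1=\bbZ(1,0)$ and $L\cap\tvs_2=\bbZ(1,2)$ are direct summands, yet an adapted basis would have to contain $\pm(1,0)$ and $\pm(1,2)$, whose determinant is $\pm2$, so it cannot be a $\bbZ$-basis of $\bbZ^2$. (In this example the intersection is $\{0\}$, so the lemma is not threatened -- only your method.) The earlier dimension bookkeeping does not close the hole either: you correctly obtain that $\tvs''=\spr(L\cap\tvs')$ is an $L$-subspace contained in $\tvs'$ with $\dimr\tvs''=\dimz(L\cap\tvs')$, but to conclude $\tvs''=\tvs'$ you would need $\dimr\tvs'\le\dimz(L\cap\tvs')$, i.e.\ precisely the assertion that the intersection contains enough lattice vectors, which is the point at issue; the remark that a finitely generated subgroup and its real span have equal rank and dimension says nothing about $\tvs'$ itself.

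Two correct ways to finish. The paper's route is geometric: by Lemma~\ref{cplvs} each $\tvs_i$ corresponds to a subtorus of $\tvs/L$; the intersection of these subtori is a compact subgroup of the torus, hence a finite union of cosets of a subtorus $\mathcal{N}$, whose tangent space at zero is $\bigcap_i\tvs_i$ (subtori being totally geodesic and the projection locally diffeomorphic), and Lemma~\ref{cplvs} applied to $\mathcal{N}$ gives the conclusion. Alternatively, a purely algebraic repair of your argument: after identifying $(L,\tvs)$ with $(\bbZ^n,\bbR^n)$, each $\tvs_i$ is the real span of the rational subspace $W_i=\spq(L\cap\tvs_i)$, and real span commutes with intersections of rational subspaces -- for two subspaces this follows from the modular dimension formula $\dim(A\cap B)=\dim A+\dim B-\dim(A+B)$, valid over $\bbQ$ and over $\bbR$, together with the fact that real span commutes with sums; induction then handles finite families, and an arbitrary family of subspaces of a finite-dimensional space has the same intersection as a suitable finite subfamily. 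Equivalently, each $\tvs_i$ is the real solution set of a homogeneous linear system with rational coefficients, hence so is $\bigcap_i\tvs_i$, which therefore has a rational basis; clearing denominators puts that basis inside $L$, as required.
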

\begin{proof}The assertion about spans follows from the case of two 
$\,L$-sub\-spaces, obvious in turn due to the second sentence of 
Definition~\ref{lsbsp}. Next, the intersection of the family of sub\-to\-ri in 
$\,\tvs\nnh/\hskip-1ptL$, arising via Lemma~\ref{cplvs} from the given 
family of $\,L$-sub\-spaces, constitutes a compact Lie sub\-group of 
$\,\tvs\nnh/\hskip-1ptL$, 
so that it is the union of 
finitely many cosets of a sub\-to\-rus $\,\mathcal{N}\nnh$. Since sub\-to\-ri 
are totally geodesic relative to the flat af\-fine connection on 
$\,\tvs\nnh/\hskip-1ptL$, while the projection 
$\,\tvs\nh\to\tvs\nnh/\hskip-1ptL\,$ 
locally dif\-feo\-mor\-phic, the tangent space of $\,\mathcal{N}\hs$ at 
zero equals the intersection of the tangent spaces of the sub\-to\-ri forming 
the family, and each tangent space corresponds to an $\,L$-sub\-space from our 
family. The conclusion is now immediate from Lemma~\ref{cplvs}.
\end{proof}
\begin{remark}\label{cptfd}For a lattice $\,L\,$ in $\,\tvs\hs$ generated by a 
basis $\,e_1\w,\dots,e_n\w$ of $\,\tvs\nnh$, the translational action of 
$\,L\,$ on $\,\tvs\hs$ has an obvious compact {\smallit fundamental domain\/} 
(a compact subset of $\,\tvs$ intersecting all orbits of $\,L$): the 
parallelepiped 
$\,\{t_1\w e_1\w\nh+\ldots+t_n\w e_n\w:t_1\w,\dots,t_n\w\nh\in[\hs0,1]\}$.
\end{remark}
\begin{remark}\label{lttce}We need the well-known fact \cite{charlap} that
\begin{enumerate}
  \def\theenumi{{\rm\alph{enumi}}}
\item[{\rm(a)}] lattices in $\,\tvs\hs$ are the same as discrete 
sub\-groups of $\,\tvs\nnh$, spanning $\,\tvs\nnh$.
\end{enumerate}
Given a lattice $\,L\,$ in $\,\tvs\hs$ and a vector sub\-space 
$\,\tvs\hh'\nnh\subseteq\tvs\nnh$, let $\,L\nh'\nh=L\cap\tvs\hh'\nnh$. Then
\begin{enumerate}
  \def\theenumi{{\rm\roman{enumi}}}
\item[{\rm(b)}] $L\nh'$ is a lattice in the vector sub\-space spanned by 
it, and
\item[{\rm(c)}] $L\nh'$ constitutes a di\-rect-sum\-mand sub\-group of $\,L$,
\end{enumerate}
as one sees using (a) and the first part of Lemma~\ref{surjc}.
\end{remark}
\begin{lemma}\label{ratss}{\smallit 
Let\/ $\,\tws\hs$ be the rational vector sub\-space 
of a fi\-nite-di\-men\-sion\-al real vector space\/ $\,\tvs\nnh$, spanned by 
a fixed lattice\/ $\,L\,$ in\/ $\,\tvs\nnh$. The four sets formed, 
respectively, by
\begin{enumerate}
  \def\theenumi{{\rm\roman{enumi}}}
\item $L$-sub\-spaces\/ $\,\tvs\hh'\nh$ of\/ $\,\tvs\nnh$,
\item di\-rect-sum\-mand sub\-groups\/ $\,L\nh'$ of\/ $\,L$,
\item rational vector sub\-spaces\/ $\,\tws\hh'\nh$ of\/ $\,\tws\nnh$,
\item sub\-to\-ri\/ $\,\mathcal{N}\hn'\nnh$ of the torus group\/ 
$\,\tvs\nnh/\nh L$, that is, its compact connected Lie sub\-groups,
\end{enumerate}
then stand in mutually consistent, natural bijective correspondences with one 
another, obtained by declaring\/ $\,\tvs\hh'\nh$ to be the real span of both\/ 
$\,L\nh'$ and\/ $\,\tws\hh'\nh$ as well as the identity component of the 
pre\-im\-age of\/ $\,\mathcal{N}\hn'\nh$ under\/ the projection 
homo\-mor\-phism\/ $\,\tvs\nh\to\tvs\nnh/\nh L$. Furthermore, 
$\,\tws\hh'\nh$ equals\/ $\,\tws\cap\tvs\hh'\nnh$ and, simultaneously, is the 
rational span of\/ $\,L\nh'\nnh$, while\/ 
$\,\mathcal{N}\hn'\nh=\tvs\hh'\nnh/\nh L\nh'\nh$ and\/ 
$\hs L\nh'\nh=L\cap\tvs\hh'\nh=L\cap\tws\hh'\nnh$. 
Finally, 
$\,\dimr\tvs\hh'\nh=\dimz L\nh'\nh=\dimq\tws\hh'\nh=\dim\mathcal{N}\hn'\nnh$.

`Mutual consistency' means here that the above finite set of bijections is 
closed under the operations of composition and inverse.
}
\end{lemma}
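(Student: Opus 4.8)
The plan is to use the set (i) of $\,L$-sub\-spaces as a hub: for each of the other three sets I would exhibit a pair of maps, to and from (i), given by exactly the formulas in the statement, check that each map lands where claimed, and verify that the two round-trip composites are the identity (which at once makes both maps bijections and mutual inverses). Mutual consistency of the entire system is then automatic, since every pairwise bijection is obtained by composing through (i), and a family of mutually inverse bijections is trivially closed under composition and inverse. Along the way the four numerical invariants all get matched to $\,\dimz L\nh'\nnh$.

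\emph{The pairs (i)$\leftrightarrow$(ii) and (i)$\leftrightarrow$(iii).} Sending an $\,L$-sub\-space $\,\tvs\hh'$ to $\,L\nh'\nh:=L\cap\tvs\hh'$ gives a di\-rect-sum\-mand sub\-group of $\,L\,$ (Lemma~\ref{surjc}, or Remark~\ref{lttce}(c)), which spans $\,\tvs\hh'$ by Definition~\ref{lsbsp}. Conversely, for a di\-rect-sum\-mand sub\-group $\,L\nh'$ with $\,L=L\nh'\nh\oplus L\nh''$, concatenating $\,\bbZ\nh$-ba\-ses of $\,L\nh'$ and $\,L\nh''$ yields a $\,\bbZ\nh$-ba\-sis of $\,L\,$ (the remark following Lemma~\ref{dirsm}), hence a basis of $\,\tvs\nnh$; so $\,\tvs=\spr L\nh'\oplus\spr L\nh''$, and for $\,x\in L\cap\spr L\nh'$ the decomposition $\,x=x'+x''$ along $\,L\nh'\nh\oplus L\nh''$ forces $\,x''\nh\in\spr L\nh'\cap\spr L\nh''\nh=\{0\}$, whence $\,L\cap\spr L\nh'\nh=L\nh'\nnh$. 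This proves $\,\spr L\nh'$ is an $\,L$-sub\-space and the two maps are mutually inverse, and exhibits $\,L\nh'$ as a lattice in $\,\spr L\nh'$ (Remark~\ref{lttce}(b)), so $\,\dimz L\nh'\nh=\dimr\spr L\nh'\nnh$. For (iii): every vector in $\,\tws=\spq L\,$ has the form $\,v/m\,$ with $\,v\in L\,$ and $\,m\,$ a positive integer (clear denominators against a $\,\bbZ\nh$-ba\-sis of $\,L$), so any $\,\bbQ\nh$-ba\-sis of a rational sub\-space $\,\tws\hh'$ rescales into $\,L$, making $\,\spr\tws\hh'$ an $\,L$-sub\-space; extending that $\,L$-val\-ued $\,\bbQ\nh$-ba\-sis of $\,\tws\hh'$ to an $\,L$-val\-ued $\,\bbQ\nh$-ba\-sis of $\,\tws$, which is simultaneously an $\,\bbR\nh$-ba\-sis of $\,\tvs\nnh$, gives $\,\tws\cap\spr\tws\hh'\nh=\tws\hh'\nnh$. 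In the other direction, for an $\,L$-sub\-space $\,\tvs\hh'$ with $\,L=L\nh'\nh\oplus L\nh''$ we get $\,\tws=\spq L\nh'\oplus\spq L\nh''$ and $\,\spr L\nh'\cap\spr L\nh''\nh=\{0\}$, so $\,\tws\cap\tvs\hh'\nh=\spq L\nh'=:\tws\hh'$ has real span $\,\tvs\hh'$ again; a $\,\bbZ\nh$-ba\-sis of $\,L\nh'$ is a $\,\bbQ\nh$-ba\-sis of $\,\tws\hh'\nnh$, so $\,\dimq\tws\hh'\nh=\dimz L\nh'\nnh$, and $\,L\nh'\nh=L\cap\tws\hh'$ since $\,L\nh'\nh\subseteq\tws\hh'\nh\subseteq\tvs\hh'\nnh$.

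\emph{The pair (i)$\leftrightarrow$(iv).} Given an $\,L$-sub\-space $\,\tvs\hh'\nnh$, Lemma~\ref{cplvs} shows that the parallel distribution on $\,\tvs\hs$ tangent to $\,\tvs\hh'$ descends to a distribution on $\,\tvs/L\,$ with compact leaves, the leaf through $\,0\,$ being the sub\-to\-rus equal to the image $\,\pi(\tvs\hh')$ of $\,\tvs\hh'$ under the projection $\,\pi:\tvs\to\tvs/L$; since $\,\ker(\pi|_{\tvs\hh'})=L\cap\tvs\hh'\nh=L\nh'\nnh$, this sub\-to\-rus is $\,\tvs\hh'\nnh/\nh L\nh'\nnh$. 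Conversely, a sub\-to\-rus $\,\mathcal{N}\hn'\nh\subseteq\tvs/L\,$ has full pre\-im\-age $\,\pi^{-1}(\mathcal{N}\hn')=\tvs\hh'\nh+L$, a closed sub\-group of $\,\tvs\hs$ whose identity component $\,\tvs\hh'$ is a linear sub\-space (being a closed connected sub\-group of $\,\tvs$); restricting the covering projection $\,\pi\,$ to $\,\pi^{-1}(\mathcal{N}\hn')\to\mathcal{N}\hn'$, openness makes $\,\pi(\tvs\hh')$ an open sub\-group of the connected group $\,\mathcal{N}\hn'\nnh$, hence all of it, so $\,\mathcal{N}\hn'\nh=\tvs\hh'\nnh/(L\cap\tvs\hh')$, and compactness of $\,\mathcal{N}\hn'$ forces $\,L\cap\tvs\hh'$ to span $\,\tvs\hh'\nnh$ --- that is, $\,\tvs\hh'$ is an $\,L$-sub\-space. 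For the round trip it remains to identify the identity component of $\,\pi^{-1}(\pi(\tvs\hh'))=\tvs\hh'\nh+L\,$ with $\,\tvs\hh'\nnh$: the translates $\,\tvs\hh'\nh+\ell\,$ ($\ell\in L$) are pairwise equal or disjoint, coinciding precisely when their difference lies in $\,L\cap\tvs\hh'\nh=L\nh'\nnh$, while $\,L/L\nh'\nh\cong L\nh''$ sits as a \emph{discrete} sub\-group of $\,\tvs/\tvs\hh'\nh\cong\spr L\nh''\nnh$, so $\,\tvs\hh'$ is clopen in $\,\tvs\hh'\nh+L$. Finally $\,\dim\mathcal{N}\hn'\nh=\dim(\tvs\hh'\nnh/\nh L\nh')=\dimr\tvs\hh'\nnh$.

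Assembling the three pairwise bijections through (i) gives the asserted natural correspondences with all the stated auxiliary identities ($\tws\hh'\nh=\tws\cap\tvs\hh'\nh=\spq L\nh'\nnh$, $\,\mathcal{N}\hn'\nh=\tvs\hh'\nnh/\nh L\nh'\nnh$, $\,L\nh'\nh=L\cap\tvs\hh'\nh=L\cap\tws\hh'$) and the common value $\,\dimr\tvs\hh'\nh=\dimz L\nh'\nh=\dimq\tws\hh'\nh=\dim\mathcal{N}\hn'\nnh$; closure under composition and inverse is immediate since every map used is one of a mutually inverse pair. \emph{The main obstacle} is the topological step in the pair (i)$\leftrightarrow$(iv) --- showing the identity component of $\,\tvs\hh'\nh+L\,$ is exactly $\,\tvs\hh'$, which quietly uses that $\,\tvs\hh'$ being an $\,L$-sub\-space makes $\,L/L\nh'$ discrete in $\,\tvs/\tvs\hh'\nnh$; everything else is bookkeeping with bases over $\,\bbZ$, $\,\bbQ\,$ and $\,\bbR$.
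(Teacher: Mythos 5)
Your proposal is correct and follows essentially the same route as the paper's proof: it uses the set (i) of $L$-sub\-spaces as a hub, takes the inverse assignments $(L',\mathcal{W}',\mathcal{N}')=(L\cap\tvs\hh',\tws\cap\tvs\hh',\tvs\hh'/L')$, checks that each map lands in the right set and that the round trips are identities, relying on Lemma~\ref{cplvs} for the torus correspondence and on $\bbZ$-basis/denominator-clearing arguments for (ii) and (iii). The only difference is cosmetic: where the paper disposes of the (i)$\leftrightarrow$(iv) round trips by citing Lemma~\ref{cplvs}, you spell out the Lie-group details (identity component of the closed preimage, openness, discreteness of $L/L'$ in $\tvs/\tvs\hh'$), which is fine.
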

\begin{proof}The mappings (ii) $\to$ (i) and (iii) $\to$ (i), as well as (iv) 
$\to$ (i), defined in the three lines following (iv), are all bijections, 
with the inverses given by$\,(L\nh'\nnh,\tws\hh'\nnh,\mathcal{N}\hn')
=(L\cap\tvs\hh'\nnh,\tws\cap\tvs\hh'\nnh,\tvs\hh'\nnh/\nh L\nh')$. Namely, 
each of the three mappings and their purported inverses takes values in the 
correct set, and each of the six map\-ping-in\-verse compositions is the 
respective identity. To be specific, the claim about the values follows from 
Lemma~\ref{cplvs} for (iv) $\to$ (i) and (i) $\to$ (iv), from 
Definition~\ref{lsbsp} and Lemma~\ref{surjc} for (ii) $\to$ (i) and (i) 
$\to$ (ii), while it is obvious for (i) $\to$ (iii) and, for (iii) $\to$ (i), 
immediate from Definition~\ref{lsbsp}, since we are free to 
assume that
\begin{equation}\label{ide}
(L,\,\tws\nh,\,\tvs)\,\,=\,\,(\bbZ^n\nh,\,\bbQ\hn^n\nh,\,\rn)\hh,\hskip12pt
\mathrm{where\ \ }\,n=\dim\tvs\nh,
\end{equation}
and every rational vector sub\-space of $\,\bbQ\hn^n$ has a basis contained in 
$\,\bbZ^n\nnh$. Next, the compositions (ii) $\to$ (i) $\to$ (ii) and (i) $\to$ 
(ii) $\to$ (i) are the identity mappings -- the former due to the fact that 
$\,L\cap\hs\spr\,L\nh'\subseteq L\nh'$ (which one sees extending a 
$\,\bbZ\nh$-ba\-sis of $\,L\nh'$ to a $\,\bbZ\nh$-ba\-sis of $\,L$) -- 
the opposite inclusion being obvious, the latter, as Definition~\ref{lsbsp} 
gives $\,\tvs\hh'=\spr\,(L\cap\tvs\hh')$. Similarly for (iii) $\to$ (i) $\to$ 
(iii) and (i) $\to$ (iii) $\to$ (i), as long as one replaces the letters 
$\,L\,$ and $\,\bbZ\,$ with $\,\tws\hs$ and $\,\bbQ$, using (\ref{ide}) and 
the line following it. Finally, (iv) $\to$ (i) $\to$ (iv) and (i) $\to$ (iv) 
$\to$ (i) are the identity mappings as a consequence of Lemma~\ref{cplvs}, 
and the dimension equalities become obvious if one, again, chooses a 
$\,\bbZ\nh$-ba\-sis of $\,L\,$ containing a $\,\bbZ\nh$-ba\-sis of 
$\,L\nh'\nnh$.
\end{proof}
In the next theorem, as $\,H\,$ is  finite, the $\,L$-pre\-serv\-ing property 
of $\,H\,$ means that, whenever $\,A\in H$, one has 
$\,\mathrm{det}\hh A=\pm\nh1$, and so $\,AL=L\,$ (rather than just 
$\,AL\subseteq L$).
\begin{theorem}\label{invcp}{\smallit 
For a lattice\/ $\,L\,$ in a fi\-nite-di\-men\-sion\-al real vector space\/ 
$\,\tvs\nnh$, a finite group\/ $\,H$ of\/ $\,L$-pre\-serv\-ing linear 
auto\-mor\-phisms of\/ $\,\tvs\nnh$, and an\/ $\,H\nnh$-in\-var\-i\-ant\/ 
$\,L$-sub\-space\/ $\,\tvs\hh'$ of\/ $\,\tvs\nnh$, there exists an\/ 
$\,H\nnh$-in\-var\-i\-ant\/ $\,L$-sub\-space\/ $\,\tvs\hh''$ of\/ 
$\,\tvs\nnh$, complementary to\/ $\,\tvs\hh'$ in the sense of\/ 
{\rm(\ref{cpl})}.
}
\end{theorem}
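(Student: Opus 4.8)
The plan is to transfer the problem to the rational span $\,\tws\nh=\mathrm{span}_{\bbQ}L\,$ of the lattice, where finiteness of $\,H\,$ makes an averaging argument available, and then to carry the complement obtained there back up to $\,\tvs\hs$, using Lemma~\ref{ratss} to recognise $\,L$-sub\-spaces among rational sub\-spaces. It is worth saying at the outset why a more direct route fails: the orthogonal complement of $\,\tvs\hh'$ relative to an $\,H$-invariant inner product on $\,\tvs\hs$ is certainly $\,H$-invariant, but in general it is not an $\,L$-sub\-space — indeed the paper stresses that the complements it produces are usually nonorthogonal. The property that must be preserved is thus rationality rather than orthogonality, and that is exactly what an averaging performed over $\,\bbQ\,$ achieves.

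First I would note that, since $\,H\,$ preserves $\,L$, it preserves $\,\tws\hs$ and acts on it by $\,\bbQ$-linear auto\-mor\-phisms, and that $\,\tws\hh'\nh:=\tws\cap\tvs\hh'\nnh$ — which by Lemma~\ref{ratss} is the rational span of $\,L\nh'\nh=L\cap\tvs\hh'$ and whose real span is $\,\tvs\hh'$ — is an $\,H$-invariant rational sub\-space of $\,\tws\nnh$. Next, choosing any $\,\bbQ$-linear projection $\,p_0\colon\tws\to\tws\,$ with image $\,\tws\hh'$ (such a $\,p_0\,$ exists because $\,\tws\hh'$ is a $\,\bbQ$-sub\-space), I would average it by setting $\,p=|H|^{-1}\sum_{A\in H}A\nh\circ p_0\circ A^{-1}\nnh$. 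Since every $\,A\in H\,$ carries $\,\tws\hh'$ onto itself, $\,p\,$ is again a $\,\bbQ$-linear projection with image $\,\tws\hh'\nnh$, and it is $\,H$-equivariant by the substitution $\,A\mapsto BA\,$ in the sum; hence $\,\tws\hh''\nh:=\mathrm{Ker}\,p\,$ is an $\,H$-invariant rational sub\-space with $\,\tws=\tws\hh'\nh\oplus\tws\hh''\nnh$. This step is, of course, just Maschke's theorem carried out over $\,\bbQ$.

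Finally, I would set $\,\tvs\hh''\nh=\mathrm{span}_{\bbR}\tws\hh''\nnh$. By Lemma~\ref{ratss} this $\,\tvs\hh''$ is an $\,L$-sub\-space of $\,\tvs\nnh$, and it is $\,H$-invariant because $\,\tws\hh''$ is. That $\,\tvs\hh''$ is complementary to $\,\tvs\hh'$ then follows by a dimension count: on one hand $\,\tvs\hh'\nh+\tvs\hh''\supseteq\tws\hh'\nh+\tws\hh''\nh=\tws\nnh$, whose real span exhausts $\,\tvs\nnh$, so $\,\tvs\hh'\nh+\tvs\hh''\nh=\tvs\nnh$; on the other hand the identities $\,\dimr\tvs\hh'\nh=\dimq\tws\hh'$ and $\,\dimr\tvs\hh''\nh=\dimq\tws\hh''$ supplied by Lemma~\ref{ratss}, added together, give $\,\dimr\tvs\hh'\nh+\dimr\tvs\hh''\nh=\dimq\tws=\dimr\tvs\nnh$, and these two facts force the sum to be direct. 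The only point in the argument where anything beyond bookkeeping happens is the averaging step, and even there the difficulty is purely conceptual — the realisation that one must descend to $\,\bbQ\,$ before averaging — rather than technical.
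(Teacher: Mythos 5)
Your proposal is correct and follows essentially the same route as the paper: pass to the rational span $\,\tws\,$ of $\,L$, average a $\,\bbQ$-linear projection onto $\,\tws\cap\tvs\hh'$ over the finite group $\,H\,$ (the paper phrases this as averaging an $\,H$-orbit in the affine space of such projections), and then carry the $\,H$-invariant kernel back to an $\,L$-sub\-space complement via Lemma~\ref{ratss}. Your explicit dimension-count verification of complementarity is just a spelled-out version of what the paper leaves implicit.
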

\begin{proof}Let $\,\tws\hh'=\hs\tws\cap\tvs\hh'\nnh$, for the rational span 
$\,\tws\hs$ of $\,L\,$ (see Lemma~\ref{ratss}). Restricted to 
$\,\tws\nnh$, elements of $\,H\,$ act by conjugation on the rational 
af\-fine space $\,\mathcal{P}\hs$ of all $\,\bbQ$-lin\-e\-ar projections 
$\,\tws\to\tws\hh'$ (by which we mean linear operators $\,\tws\to\tws\hh'$ 
equal to the identity on $\,\tws\hh'$). The average of any orbit of the action 
of $\,H\,$ on $\,\mathcal{P}\hs$ is an $\,H\nh$-in\-var\-i\-ant 
projection $\,\tws\to\tws\hh'$ with a kernel $\,\tws\hh''$ corresponding via 
Lemma~\ref{ratss} to our required $\,\tvs\hh''\nnh$.
\end{proof}
\begin{corollary}\label{dcomp}{\smallit 
If\/ $\,L,\tvs\nnh,H\,$ satisfy the hypotheses of Theorem\/~{\rm\ref{invcp}}, 
every nonzero\/ $\,H\nnh$-in\-var\-i\-ant\/ $\,L$-sub\-space\/ 
$\,\tvs\hh'_{\hskip-2pt0}$ of\/ $\,\tvs$ can be decomposed into a direct sum 
of one or more nonzero\/ $\,H\nnh$-in\-var\-i\-ant\/ $\,L$-sub\-spaces, each 
of which is minimal in the sense of not containing any further nonzero 
proper\/ $\,H\nnh$-in\-var\-i\-ant\/ $\,L$-sub\-space.
}
\end{corollary}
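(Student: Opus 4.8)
The plan is to use induction on the dimension of $\,\tvs\hh'_{\hskip-2pt0}\,$, or, more efficiently, on the number of distinct $\,H\nh$-in\-var\-i\-ant $\,L$-sub\-spaces contained in $\,\tvs\hh'_{\hskip-2pt0}\,$, which is finite: indeed, by Lemma~\ref{ratss} such sub\-spaces correspond bijectively to rational vector sub\-spaces of the finite-di\-men\-sion\-al rational space $\,\tws\hs$, and any descending or ascending chain of them must stabilize, so $\,\tvs\hh'_{\hskip-2pt0}\,$ contains at least one minimal nonzero $\,H\nh$-in\-var\-i\-ant $\,L$-sub\-space.

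First I would dispose of the base case: if $\,\tvs\hh'_{\hskip-2pt0}\,$ is itself minimal, there is nothing to prove. Otherwise, pick a minimal nonzero $\,H\nh$-in\-var\-i\-ant $\,L$-sub\-space $\,\tvs\hh'_{\hskip-2pt1}\subseteq\tvs\hh'_{\hskip-2pt0}\,$ (which exists by the finiteness remark above, choosing any minimal element of the nonempty poset of nonzero $\,H\nh$-in\-var\-i\-ant $\,L$-sub\-spaces of $\,\tvs\hh'_{\hskip-2pt0}\,$). Now apply Theorem~\ref{invcp}, but \emph{working inside $\,\tvs\hh'_{\hskip-2pt0}\,$ rather than inside $\,\tvs\nnh$}: the data $\,(L\cap\tvs\hh'_{\hskip-2pt0},\,\tvs\hh'_{\hskip-2pt0},\,H|_{\tvs\hh'_{\hskip-2pt0}})\,$ again satisfy the hypotheses of that theorem --- $\,L\cap\tvs\hh'_{\hskip-2pt0}\,$ is a lattice in $\,\tvs\hh'_{\hskip-2pt0}\,$ by Remark~\ref{lttce}(b), it is $\,H$-pre\-serv\-ed, and $\,\tvs\hh'_{\hskip-2pt1}\,$ is a $\,(L\cap\tvs\hh'_{\hskip-2pt0})$-sub\-space, being an $\,L$-sub\-space contained in $\,\tvs\hh'_{\hskip-2pt0}\,$. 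Hence there is an $\,H\nh$-in\-var\-i\-ant $\,(L\cap\tvs\hh'_{\hskip-2pt0})$-sub\-space $\,\tvs\hh'_{\hskip-2pt2}\,$ with $\,\tvs\hh'_{\hskip-2pt0}=\tvs\hh'_{\hskip-2pt1}\oplus\tvs\hh'_{\hskip-2pt2}\,$; and $\,\tvs\hh'_{\hskip-2pt2}\,$, being spanned by a subset of $\,L\cap\tvs\hh'_{\hskip-2pt0}\subseteq L\,$, is in fact an $\,L$-sub\-space of $\,\tvs\nnh$. Since $\,\tvs\hh'_{\hskip-2pt1}\ne\{0\}\,$, the space $\,\tvs\hh'_{\hskip-2pt2}\,$ has strictly smaller dimension than $\,\tvs\hh'_{\hskip-2pt0}\,$, so by the inductive hypothesis $\,\tvs\hh'_{\hskip-2pt2}\,$ decomposes into a direct sum of minimal nonzero $\,H\nh$-in\-var\-i\-ant $\,L$-sub\-spaces; prepending $\,\tvs\hh'_{\hskip-2pt1}\,$ to that list gives the desired decomposition of $\,\tvs\hh'_{\hskip-2pt0}\,$.

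One subtlety worth spelling out, and the only place any care is needed, is that ``minimal'' for a summand must be understood relative to $\,\tvs\nnh$ (not relative to $\,\tvs\hh'_{\hskip-2pt2}\,$): but this is automatic, since a nonzero proper $\,H\nh$-in\-var\-i\-ant $\,L$-sub\-space contained in one of the summands is, a fortiori, such a sub\-space, and the summands produced in the recursion were chosen minimal in $\,\tvs\hh'_{\hskip-2pt0}\,$ --- and minimality in $\,\tvs\hh'_{\hskip-2pt0}\,$ coincides with minimality in $\,\tvs\nnh$ because any $\,H\nh$-in\-var\-i\-ant $\,L$-sub\-space of $\,\tvs\nnh$ contained in $\,\tvs\hh'_{\hskip-2pt1}\,$ is contained in $\,\tvs\hh'_{\hskip-2pt0}\,$. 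There is no real obstacle here; the ``hard part,'' such as it is, is purely bookkeeping: verifying that Theorem~\ref{invcp} may legitimately be invoked with the ambient space shrunk to $\,\tvs\hh'_{\hskip-2pt0}\,$, and that an $\,L\cap\tvs\hh'_{\hskip-2pt0}$-sub\-space of $\,\tvs\hh'_{\hskip-2pt0}\,$ is the same thing as an $\,L$-sub\-space of $\,\tvs\nnh$ that happens to lie in $\,\tvs\hh'_{\hskip-2pt0}\,$ --- both of which are immediate from Definition~\ref{lsbsp} and Remark~\ref{lttce}(b).
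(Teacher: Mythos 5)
Your argument is correct, and it follows the same inductive skeleton as the paper's proof -- induction on dimension, with Theorem~\ref{invcp} supplying the complement that splits off a summand -- but it differs in two details worth recording. First, the paper does not relativize Theorem~\ref{invcp}: it picks a nonzero proper $H\nh$-in\-var\-i\-ant $L$-sub\-space $\,\tvs\hh'\subseteq\tvs\hh'_{\hskip-2pt0}$ (not necessarily minimal), applies the theorem in the ambient space $\,\tvs\,$ to get a complement $\,\tvs\hh''$ of $\,\tvs\hh'$ in $\,\tvs\nnh$, and then uses $\,\tvs\hh'_{\hskip-2pt0}\cap\tvs\hh''$ as the complement inside $\,\tvs\hh'_{\hskip-2pt0}$, invoking Lemma~\ref{spint} to see that this intersection is again an $\,H\nh$-in\-var\-i\-ant $L$-sub\-space; the induction hypothesis is then applied to \emph{both} pieces. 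You instead shrink the data to $\,(L\cap\tvs\hh'_{\hskip-2pt0},\,\tvs\hh'_{\hskip-2pt0},\,$the restriction of $H)$, which is legitimate, and your verification of the hypotheses via Remark~\ref{lttce}(b) and Definition~\ref{lsbsp} is exactly the needed bookkeeping; choosing the first summand minimal means induction is applied only to the second piece. Either route works: the paper's avoids the relativization check at the cost of Lemma~\ref{spint}, yours avoids that lemma. Second, one slip: your parenthetical claim that the set of $H\nh$-in\-var\-i\-ant $L$-sub\-spaces contained in $\,\tvs\hh'_{\hskip-2pt0}$ is finite is false in general (already for trivial $H$ and $\,\dim\tvs\hh'_{\hskip-2pt0}\ge2\,$ there are infinitely many $L$-sub\-spaces, one for each rational line). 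Fortunately nothing depends on it: the existence of a minimal nonzero $H\nh$-in\-var\-i\-ant $L$-sub\-space, and your induction itself, rest only on the fact that dimensions are nonnegative integers, so strictly descending chains terminate. Delete the finiteness remark and the proof stands as written.
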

\begin{proof}Induction on the possible values of 
$\,\dim\tvs\hh'_{\hskip-2pt0}$. Assuming the claim true for sub\-spaces of 
dimensions less than $\,\dim\tvs\hh'_{\hskip-2pt0}$, along with 
non-min\-i\-mal\-ity of $\,\tvs\hh'_{\hskip-2pt0}$, we fix a nonzero proper 
$\,H\nh$-in\-var\-i\-ant $\,L$-sub\-space $\,\tvs\hh'$ of $\,\tvs\nnh$, 
contained in $\,\tvs\hh'_{\hskip-2pt0}$, and choose a complement $\,\tvs\hh''$ 
of $\,\tvs\hh'\nnh$, guaranteed to exist by Theorem~\ref{invcp}. Since 
$\,\tvs\hh''$ intersects every coset of $\,\tvs\hh'$ in $\,\tvs\nnh$, 
including cosets within $\,\tvs\hh'_{\hskip-2pt0}$, the sub\-space 
$\,\tvs\hh'_{\hskip-2pt0}\cap\tvs\hh''$ is an $\,H\nh$-in\-var\-i\-ant 
complement of $\,\tvs\hh'$ in $\,\tvs\hh'_{\hskip-2pt0}$, as well as an 
$\,L$-sub\-space (due to Lemma~\ref{spint}). We may now apply the induction 
assumption to both $\,\tvs\hh'$ and 
$\,\tvs\hh'_{\hskip-2pt0}\cap\tvs\hh''\nnh$.
\end{proof}
\begin{remark}\label{quotl}Given a lattice $\,L\,$ in a 
fi\-nite-di\-men\-sion\-al real vector space $\,\tvs\hs$ and an 
$\,L$-sub\-space $\,\tvs\hh'$ of $\,\tvs\nnh$, the restriction to $\,L\,$ of 
the quo\-tient-space projection $\,\tvs\nnh\to\tvs\nh/\tvs\hh'$ has the 
kernel $\,L\nh'\nh=L\cap\tvs\hh'\nnh$, and so it descends to an injective 
group homo\-mor\-phism $\,L/\nh L\nh'\nnh\to\tvs\nh/\tvs\hh'\nnh$, the image 
of which is a (full) lattice in an $\,\tvs\nh/\tvs\hh'$ (which follows if 
one uses a $\,\bbZ\nh$-ba\-sis of $\,L\,$ containing a $\,\bbZ\nh$-ba\-sis of 
$\,L\nh'$). From now on we will treat $\,L/\nh L\nh'$ as a subset of 
$\,\tvs\nh/\tvs\hh'\nnh$. Discreteness of the lattice 
$\,L/\nh L\nh'\nnh\subseteq\tvs\nh/\tvs\hh'$ clearly implies the existence 
of an open subset $\,\mathcal{U}'$ of $\,\tvs\nnh$, containing 
$\,\tvs\hh'$ and forming a union of cosets of $\,\tvs\hh'\nnh$, such that 
$\,L\hn\cap\hs\mathcal{U}'\nh=\hs L\nh'\nnh$.
\end{remark}

\section{Af\-fine spaces}\label{as}
We denote by $\,\mathrm{End}\,\tvs\hs$ the space of linear en\-do\-mor\-phisms 
of a given real vector space $\,\tvs\nnh$. Scalars stand for the corresponding 
multiples of identity, so that the identity itself becomes 
$\,1\in\mathrm{End}\,\tvs\nnh$. For a fi\-nite-di\-men\-sion\-al real af\-fine 
space $\,\mathcal{E}\hs$ with the translation vector space $\,\tvs\nnh$, let 
$\,\mathrm{Af{}f}\,\hs\mathcal{E}\hs$ be the group of all af\-fine 
transformations of $\,\mathcal{E}\nh$. The inclusion 
$\,\mathcal{V}\hh\subseteq\mathrm{Af{}f}\,\hs\mathcal{E}\hs$ expresses the 
fact that $\,\mathrm{Af{}f}\,\hs\mathcal{E}\hs$ contains the normal sub\-group 
consisting of all translations. Any vector sub\-space $\,\tvs\hh'$ of 
$\,\tvs\hs$ gives rise to a foliation of $\,\mathcal{E}\nh$, with the leaves 
formed by af\-fine sub\-spaces $\,\mathcal{E}'$ parallel to $\,\tvs\hh'\nnh$, 
that is, orbits of the translational action of $\,\tvs\hh'$ on 
$\,\mathcal{E}\hs$ (which we may also refer to as {\smallit cosets\/} of 
$\,\tvs\hh'$ in $\,\mathcal{E}$). The resulting leaf (quotient) space 
$\,\mathcal{E}\nnh/\hn\tvs\hh'$ constitutes an af\-fine space having the 
translation vector space $\,\tvs\nnh/\hn\tvs\hh'\nnh$. Clearly,
\begin{equation}\label{opt}
\mathrm{for\ cosets\ }\,\mathcal{E}'\nnh,\mathcal{E}''\mathrm{\ of\ 
sub\-spaces\ }\,\tvs\hh'\nnh,\tvs\hh''\nnh\subseteq\tvs\hs\mathrm{\ with\ 
(\ref{cpl}),\ }\,\mathcal{E}'\nnh\cap\hs\mathcal{E}''\,\mathrm{\ is\ a\ 
one}\hyp\mathrm{point\ set.}
\end{equation}
A fixed inner product in $\,\hs\tvs\hs$ turns $\,\mathcal{E}\hs$ into a 
{\smallit Euclidean af\-fine space}, with the isom\-e\-try group 
$\,\mathrm{Iso}\,\hs\mathcal{E}\subseteq\mathrm{Af{}f}\,\hs\mathcal{E}$. If 
$\,\rd\in(0,\infty)$, we define the $\,\rd${\smallit-neigh\-bor\-hood\/} of 
an af\-fine sub\-space $\,\mathcal{E}'$ of $\,\mathcal{E}\hs$ to be the set of 
points in $\,\mathcal{E}\hs$ lying at distances less that $\,\rd\,$ from 
$\,\mathcal{E}'\nnh$. Clearly, the $\,\rd${\smallit-neigh\-bor\-hood\/} of 
$\,\mathcal{E}'$ is a union of cosets of a vector sub\-space $\,\tvs\hh'$ of 
$\,\tvs\hs$ (one of them being $\,\mathcal{E}'$ itself), as well as the 
preimage, under the projection 
$\,\mathcal{E}\nh\to\mathcal{E}\nnh/\hn\tvs\hh'\nnh$, of the radius $\,\rd\,$ 
open ball centered at the point $\,\mathcal{E}'$ in the quotient Euclidean 
af\-fine space $\,\mathcal{E}\nnh/\hn\tvs\hh'$ (for the obvious inner product 
on $\,\tvs\nnh/\hn\tvs\hh'$).
\begin{remark}\label{afext}Given a Euclidean af\-fine space 
$\,\mathcal{E}\hs$ and an af\-fine sub\-space $\,\mathcal{E}'$ parallel to a 
vector sub\-space $\,\tvs\hh'$ of the translation vector space $\,\tvs\hs$ of 
$\,\mathcal{E}\nh$, (af\-fine) self-isom\-e\-tries $\,\zeta$ of 
$\,\mathcal{E}\hs$ such that $\,\zeta(x)=x\,$ for all $\,x\in\mathcal{E}'$ are 
in an obvious one-to-one correspondence with linear self-isom\-e\-tries 
$\,A\,$ of the orthogonal complement of $\,\tvs\hh'\nnh$. In this case we will 
refer to $\,\zeta\,$ as an {\smallit af\-fine extension\/} of $\,A$, depending 
on $\,\mathcal{E}'\nnh$.
\end{remark}
\begin{remark}\label{trprt}Any choice of an origin $\,o\in\mathcal{E}\hs$ in 
an af\-fine space $\,\mathcal{E}\hs$ leads to the obvious
identification of $\,\mathcal{E}\nh$ with its translation vector space
$\,\tvs\nnh$, under which a vector $\,v\in\tvs$ corresponds to the point 
$\,x=o+v\in\mathcal{E}\nh$. Af\-fine mappings 
$\,\gamma\in\mathrm{Af{}f}\,\hs\mathcal{E}\hs$ are then represented by 
pairs $\,(A,b)\,$ consisting of $\,A\in\mathrm{End}\,\tvs\hs$ and 
$\,b\in\mathcal{E}\nh$, so that $\,\gamma(o+v)=o+Av\nh+\hn b$. The pair 
associated in this way with $\,\gamma\,$ and a {\smallit new} origin 
$\,o+w\,$ is, obviously, $\,(A,c)$, for the same $\,A$ (the linear part of 
$\,\gamma$) and $\,c=b+(A-1)w$. Thus, the coset 
$\,b+\hat{\tvs}\nh\subseteq\tvs\nnh$, where $\,\hat{\tvs}$ denotes the image 
of $\,A-1$, forms an invariant of $\,\gamma\,$ (while $\,b\,$ itself does not, 
except in the case of translations $\,\gamma$, having $\,A=1$). For any fixed 
vector subspace $\,\tvs\hh'$ of $\,\tvs\hs$ and any 
$\,\gamma\in\mathrm{Af{}f}\,\hs\mathcal{E}\hs$ with a linear part $\,A\,$ 
leaving $\,\tvs\hh'$ invariant, it now makes sense to require 
that $\,A\,$ descend to the identity transformation of 
$\,\tvs\nnh/\hn\tvs\hh'$ (i.e., $\,(A-1)(\tvs)\subseteq\tvs\hh'$) and, 
simultaneously, that the ``translational part'' $\,b\,$ of $\,\gamma\,$ lie in 
$\,\tvs\hh'\nnh$. More precisely, such a property of $\,\gamma\,$ does not 
depend on the origin used to represent $\,\gamma\,$ as a pair $\,(A,b)$.
\end{remark}
\begin{remark}\label{nrsbg}Given $\,\mathcal{E}\nh,\tvs\hs$ and $\,\tvs\hh'$ 
as in Remark~\ref{trprt}, the af\-fine transformations $\,\gamma\,$ of 
$\,\mathcal{E}$ with linear parts leaving $\,\tvs\hh'$ invariant and 
descending to the identity transformation of $\,\tvs\nnh/\hn\tvs\hh'$ 
obviously form a sub\-group of $\,\mathrm{Af{}f}\,\hs\mathcal{E}\hs$ 
containing, as a {\smallit normal sub\-group}, the set of such $\,\gamma\,$ 
which have ``translational parts'' in $\,\tvs\hh'\nnh$. This follows since the 
latter set is the kernel of the obvious homo\-mor\-phism from the original 
sub\-group into 
$\,\tvs\nnh/\hn\tvs\hh'\nh
\subseteq\nh\mathrm{Af{}f}\,[\mathcal{E}\nnh/\hn\tvs\hh']$. More 
precisely, $\,\gamma\,$ represented by the pair $\,(A,b)\,$ (see 
Remark~\ref{trprt}) preserves each element of $\,\mathcal{E}\nnh/\hn\tvs\hh'$ 
if and only if $\,Av\nh+\hn b\,$ differs from $\,v$, for every 
$\,v\in\tvs\nh$, by an element $\,\tvs\hh'$ or, equivalently (as one sees 
setting $\,v=0$), $\,\tvs\hh'$ contains both $\,b\,$ and the image of $\,A-1$.
\end{remark}
\begin{lemma}\label{fltmf}{\smallit 
Remark\/~{\rm\ref{nrexp}} has the following 
additional conclusions when\/ $\,\mathcal{M}\hn'$ is a compact leaf of a 
parallel distribution\/ $\,D\,$ on a complete flat Riemannian manifold\/ 
$\,\mathcal{M}$.
\begin{enumerate}
  \def\theenumi{{\rm\alph{enumi}}}
\item Every level of\/ $\,\mathrm{dist}\hh(\mathcal{M}\hn'\nh,\,\cdot\,)\,$ 
in\/ $\,\mathcal{M}\nh_\rd\w$, and\/ $\,\mathcal{M}\nh_\rd\w$ itself, is a 
union of leaves of\/ $\,D$.
\item Restrictions of\/ 
$\,\mathcal{M}\nh_\rd\w\ni x\mapsto y\in\mathcal{M}\hn'$ 
to leaves of\/ $\,D\,$ in\/ $\,\mathcal{M}\nh_\rd\w$ are locally isometric.
\item The local inverses of all the above lo\-cal\-ly-i\-so\-met\-ric 
restrictions correspond via the dif\-feo\-mor\-phism\/ $\,\mathrm{Exp}^\perp$ 
to all local sections of the normal bundle of\/ $\,\mathcal{M}\hn'$ obtained 
by restricting to\/ $\,\mathcal{M}\hn'$ local parallel vector fields of 
lengths\/ $\,r\in[\hs0,\rd)\,$ that are tangent to\/ $\,\mathcal{M}\,$ and 
normal to\/ $\,\mathcal{M}\hn'\nnh$, with\/ $\,\hs r\hskip.55pt$ equal to the 
value of\/ $\,\mathrm{dist}\hh(\mathcal{M}\hn'\nh,\,\cdot\,)\,$ on the leaf.
\end{enumerate}
}
\end{lemma}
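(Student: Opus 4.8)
The plan is to deduce all three assertions from the single fact that the orthogonal complement distribution $\,D^\perp$ of $\,D\,$ is itself parallel (since $\,D\,$ and the Riemannian metric are parallel), hence -- exactly like $\,D$ -- integrable and totally geodesic, with parallel transport along every curve carrying $\,D\,$ onto $\,D\,$ and $\,D^\perp$ onto $\,D^\perp\nnh$. The first step is to note that any minimizing geodesic $\,\sigma\,$ joining $\,\mathcal{M}\hn'$ to a point of $\,\mathcal{M}_\rd\,$ is everywhere tangent to $\,D^\perp\nnh$: by Remark~\ref{nrexp}(b) its initial velocity is normal to $\,\mathcal{M}\hn'\nnh$, hence lies in $\,D^\perp\nnh$, and, being parallel along the geodesic $\,\sigma\,$ while $\,D^\perp$ is preserved by parallel transport, it stays in $\,D^\perp$.

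For part (a), recall from Remark~\ref{nrexp}(a)--(c) that $\,f=\mathrm{dist}\hh(\mathcal{M}\hn'\nh,\,\cdot\,)\,$ is smooth on $\,\mathcal{M}_\rd\smallsetminus\mathcal{M}\hn'$ (it is the fibre norm transported by the diffeomorphism $\,\mathrm{Exp}^\perp$), with $\,\nabla f\,$ at each point the unit velocity of the minimizing normal geodesic through it. By the previous step $\,\nabla f\in D^\perp$, so $\,df\,$ annihilates $\,D\,$ and $\,f\,$ is locally constant along $\,D\,$ throughout $\,\mathcal{M}_\rd\smallsetminus\mathcal{M}\hn'\nnh$. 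Now let $\,\mathcal{L}\,$ be any leaf of $\,D\,$ meeting $\,\mathcal{M}_\rd$. If $\,\mathcal{L}\,$ meets $\,\mathcal{M}\hn'\nnh$, then $\,\mathcal{L}=\mathcal{M}\hn'$ (both are the leaf through that point) and there is nothing to prove; otherwise fix $\,x\in\mathcal{L}\cap\mathcal{M}_\rd\,$ with $\,r=f(x)\in(0,\rd)$, and run a clopen argument on the connected set $\,\mathcal{L}$: the level $\,\{x'\in\mathcal{L}:f(x')=r\}\,$ is closed by continuity of $\,f$, and open because near any of its points $\,\mathcal{L}\,$ contains a plaque lying in $\,\mathcal{M}_\rd\smallsetminus\mathcal{M}\hn'$ on which the local constancy just obtained forces $\,f\equiv r$. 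Thus $\,f\equiv r\,$ on $\,\mathcal{L}$, so $\,\mathcal{L}\subseteq f^{-1}(r)\subseteq\mathcal{M}_\rd$; hence each level of $\,f\,$ in $\,\mathcal{M}_\rd$, and $\,\mathcal{M}_\rd\,$ itself, is a union of leaves of $\,D$.

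For (b) and (c) I would study, for a given $\,y\in\mathcal{M}\hn'$ and a local parallel vector field $\,\xi\,$ defined near $\,y$, normal to $\,\mathcal{M}\hn'$ (equivalently: parallel with value in $\,D^\perp$ at one point of $\,\mathcal{M}\hn'\nnh$, since $\,D^\perp$ is parallel) and of constant length $\,r\in[\hs0,\rd)$, the ``push-out'' map $\,\Psi(z)=\mathrm{Exp}^\perp(\xi_z)=\exp_z(\xi_z)\,$ on a neighborhood $\,V\,$ of $\,y\,$ in $\,\mathcal{M}\hn'\nnh$. A standard Jacobi-field computation in the flat manifold $\,\mathcal{M}\,$ shows $\,d\Psi_z\,$ to be parallel transport along the radial geodesic $\,s\mapsto\exp_z(s\xi_z/r)$, $\,s\in[\hs0,r]$: the Jacobi field $\,J\,$ along it with $\,J(0)=w\in T_z\mathcal{M}\hn'$, for which $\,d\Psi_z(w)=J(r)$, satisfies $\,J''=0\,$ (flatness) and $\,J'(0)=r^{-1}\nabla_w\xi=0\,$ (as $\,\xi\,$ is parallel), hence is the parallel transport of $\,w$. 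Since parallel transport is a linear isometry carrying $\,D\,$ onto $\,D$, the map $\,\Psi\,$ is an isometric immersion, locally an embedding, whose image is a plaque of a leaf of $\,D\,$ -- automatically contained in $\,\mathcal{M}_\rd\,$ and, by (a), with $\,f\equiv r\,$ on it -- and, $\,z\,$ being the point of $\,\mathcal{M}\hn'$ nearest to $\,\Psi(z)\,$ (they are joined by the normal minimizing geodesic $\,\sigma$, of length $\,r<\rd$; Remark~\ref{nrexp}(b)), the nearest-point map $\,\mathcal{M}_\rd\to\mathcal{M}\hn'$ restricts on $\,\mathrm{im}\,\Psi\,$ to $\,\Psi^{-1}\nnh$. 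Conversely, every leaf $\,\mathcal{L}\,$ of $\,D\,$ meeting $\,\mathcal{M}_\rd$, and a point $\,x\in\mathcal{L}$, arise this way: set $\,y\,$ equal to the nearest point of $\,\mathcal{M}\hn'$ to $\,x$, and parallel-transport the vector $\,(\mathrm{Exp}^\perp)^{-1}(x)\,$ -- normal to $\,\mathcal{M}\hn'$ at $\,y$, of length $\,f|_{\mathcal{L}}<\rd\,$ by (a) -- over a neighborhood of $\,y\,$ to a parallel normal field $\,\xi$. This yields (b), the nearest-point map restricted to a leaf of $\,D\,$ being locally $\,\Psi^{-1}$ and thus a local isometry onto $\,\mathcal{M}\hn'\nnh$, and (c), the corresponding local inverse $\,\Psi\,$ being by construction $\,\mathrm{Exp}^\perp$ applied to the normal section $\,\xi|_V$, with $\,|\xi|=r\,$ the common value of $\,f\,$ on the leaf.

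The part likely to demand the most care is the bookkeeping reconciling the ``global'' normal-exponential description of $\,\mathcal{M}_\rd\,$ from Remark~\ref{nrexp} with these pointwise constructions: checking that $\,\mathrm{im}\,\Psi\,$ genuinely lies in $\,\mathcal{M}_\rd\,$ and inside the leaf through $\,x$, that the globally defined nearest-point map agrees locally with $\,\Psi^{-1}$ (here the uniqueness clause of Remark~\ref{nrexp}(b), applied to the radial minimizing geodesics of length $\,<\rd$, is the essential input), and that the converse in (c) exhausts all leaves meeting $\,\mathcal{M}_\rd$ -- precisely where part (a) is used. The only other subtlety is the failure of smoothness of $\,f\,$ along $\,\mathcal{M}\hn'$, which is why the clopen argument in (a) handles leaves meeting $\,\mathcal{M}\hn'$ as a separate trivial case.
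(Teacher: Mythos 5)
Your proof is correct, but it follows a genuinely different route from the paper's. The paper disposes of the lemma in one sentence: it lifts $\,D\,$ to the Riemannian universal covering space of the complete flat manifold $\,\mathcal{M}$, which is a Euclidean af\-fine space $\,\mathcal{E}$, where the pulled-back parallel distribution is the constant one tangent to the cosets of a fixed subspace $\,\tvs\hh'$ of the translation space; all three assertions then reduce to evident facts about families of parallel af\-fine subspaces, transported down by the locally isometric covering projection. You instead argue intrinsically on $\,\mathcal{M}$: parallelism of $\,D^\perp$ forces the minimizing normal geodesics to stay in $\,D^\perp$, so $\,\nabla\,\mathrm{dist}\hh(\mathcal{M}\hn'\nh,\,\cdot\,)$ annihilates $\,D\,$ and a clopen argument on each connected leaf gives (a); then the Jacobi-field computation ($J''=0$ by flatness, $J'(0)=0$ by parallelism of $\,\xi$) identifies $\,d\Psi\,$ with parallel transport, so the push-out maps $\,\Psi=\mathrm{Exp}^\perp\nh\circ\hh\xi\,$ are isometric embeddings onto plaques of leaves whose inverses are the nearest-point restrictions, and the converse construction (parallel extension of $\,(\mathrm{Exp}^\perp)^{-1}(x)$, path-independent by flatness) shows every leaf point and every local inverse arises this way, yielding (b) and (c). Your version is longer but self-contained and does not invoke the covering-space picture at all -- only local flatness and the tubular-neighborhood facts of Remark~\ref{nrexp} -- whereas the paper's one-line reduction trades that for the Bieberbach/universal-cover machinery it has already set up and uses throughout. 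The careful points you flag (image of $\,\Psi\,$ lying in the leaf and in $\,\mathcal{M}\nh_\rd\w$, agreement of the nearest-point map with $\,\Psi^{-1}$ via the uniqueness clause of Remark~\ref{nrexp}(b), and the separate treatment of leaves meeting $\,\mathcal{M}\hn'$ where the distance function is not smooth) are handled adequately.
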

This trivially follows from the fact the  pull\-back of $\,D\,$ to the 
Euclidean af\-fine space $\,\mathcal{E}$ constituting the Riemannian universal 
covering space of $\,\mathcal{M}\,$ is a distribution with the leaves provided 
by af\-fine sub\-spaces parallel to $\,\tvs\hh'\nnh$, for some vector 
sub\-space $\,\tvs\hh'$ of the translation vector space $\,\tvs\hs$ of 
$\,\mathcal{E}\nh$.

\section{Bieberbach groups and flat manifolds}\label{bg}
Let $\,\mathcal{E}\hs$ be a Euclidean af\-fine $\,n$-space (Section~\ref{as}), 
with the translation vector space $\,\tvs\nnh$. By a {\smallit Bie\-ber\-bach 
group\/} in $\,\mathcal{E}\hs$ one means any tor\-sion-free discrete 
sub\-group $\,\bg\hs$ of $\,\mathrm{Iso}\,\hs\mathcal{E}\hs$ for which there 
exists a compact fundamental domain (Remark~\ref{cptfd}). The {\smallit 
lattice sub\-group\/} $\,L$ of $\,\bg\nh$, and its {\smallit holonomy group\/} 
$\,H\subseteq\mathrm{Iso}\,\tvs\cong\mathrm{O}\hs(n)\,$ then are defined by
\begin{equation}\label{lah}
L\,=\,\bg\hn\cap\tvs,\hskip22ptH\,=\,\lp(\bg)\hh,
\end{equation}
$\lp:\mathrm{Af{}f}\,\hs\mathcal{E}\to\mathrm{Aut}\,\hn\tvs
\cong\mathrm{GL}\hs(n,\bbR)\,$ being the lin\-e\-ar-part homo\-mor\-phism. 
Thus, $\,L\,$ is the set of all translations lying in $\,\bg\hs$ (which also 
makes it the kernel of the restriction $\,\lp:\bg\to H$), and $\,H\,$ 
consists of the linear parts of elements of $\,\bg\nh$. Note that 
$\,L\subseteq\tvs\hs$ is a (full) lattice in the usual sense \cite{charlap}, 
cf.\ Section~\ref{lv}. The relations involving 
$\,\bg,L\,$ and $\,H\,$ are conveniently summarized by the short exact sequence
\begin{equation}\label{exa}
L\,\to\,\hs\bg\,\to\,H,\hskip8pt\mathrm{where\ the\ arrows\ are\ the\ 
inclusion\ homo\-mor\-phism\ and\ }\,\lp\hh.
\end{equation}
\begin{remark}\label{frpdc}The action of a Bie\-ber\-bach group $\,\bg\hs$ on 
the Euclidean af\-fine space $\,\mathcal{E}$ being always free and properly 
discontinuous, the quotient $\,\mathcal{M}=\mathcal{E}/\hn\bg\nh$, with the 
projected metric, forms a compact flat Riemannian manifold, while $\,H\,$ must 
be finite \cite{charlap}.
\end{remark}
\begin{remark}\label{cnjug}As the normal sub\-group $\,L\,$ of $\,\bg\hs$ is 
A\-bel\-i\-an, the action of $\,\bg\hs$ on $\,L$ by conjugation descends to 
an action on $\,L\,$ of the quotient group $\,\bg/L$, identified via 
(\ref{exa}) with $\,H$. This last action is clearly nothing else than the 
ordinary linear action of $\,H\,$ on $\,\tvs\nh$, restricted to the lattice 
$\,L\subseteq\tvs\,$ (and so, in particular, $\,L\,$ must be 
$\,H\nh$-in\-var\-i\-ant).
\end{remark}
\begin{remark}\label{bijct}The assignment of 
$\,\mathcal{M}=\mathcal{E}/\hn\bg\hs$ to $\,\bg\hs$ establishes a well-known 
bijective correspondence \cite{charlap} between equivalence classes of 
Bie\-ber\-bach groups and isometry types of compact flat Riemannian manifolds. 
Bie\-ber\-bach groups $\,\bg\hs$ and $\,\hat\bg\hs$ in Euclidean af\-fine 
spaces $\,\mathcal{E}\hs$ and $\,\hat{\mathcal{E}}\hs$ are called {\smallit 
equivalent\/} here if some af\-fine isometry 
$\,\mathcal{E}\to\hat{\mathcal{E}}\hs$ conjugates $\,\bg$ onto $\,\hat\bg\nh$. 
Furthermore, $\,\bg\hs$ and $\,H\,$ in (\ref{exa}) serve as the fundamental 
and holonomy groups of $\,\mathcal{M}$, while $\,\bg\hs$ also acts via deck 
tranformations on the Riemannian universal covering space of $\,\mathcal{M}$, 
isometrically identified with $\,\mathcal{E}$.
\end{remark}

\section{Lat\-tice-re\-duc\-i\-bil\-i\-ty}\label{lr}
A Bie\-ber\-bach group $\,\bg\hs$ in a Euclidean af\-fine space 
$\,\mathcal{E}\hs$ (or, the compact flat Riemannian manifold 
$\,\mathcal{M}=\mathcal{E}/\hn\bg\hs$ corresponding to $\,\bg\nh$, cf.\ 
Remark~\ref{bijct}) will be called {\smallit lat\-tice-re\-duc\-i\-ble\/} if, 
for $\,\tvs\nnh,H\,$ and $\,L\,$ associated with $\,\mathcal{E}\hs$ and 
$\,\bg\hs$ as in Section~\ref{bg}, there exists $\,\tvs\hh'$ such that
\begin{equation}\label{lrd}
\,\tvs\hh'\mathrm{\ is\ a\ nonzero\ proper\ 
}\,H\nh\hyp\mathrm{in\-var\-i\-ant\ }\,L\hyp\mathrm{subspace\ of\ 
}\,\tvs\nnh.
\end{equation}
(See Definition~\ref{lsbsp}.) To emphasize the role of $\,\tvs\hh'$ in 
(\ref{lrd}), we also say that
\begin{equation}\label{qdr}
\mathrm{the\ lat\-tice}\hyp\mathrm{re\-duc\-i\-bil\-i\-ty\ condition\ 
(\ref{lrd})\ holds\ for\ the\ quadruple\ 
}\hs(\tvs\nnh,H,L,\tvs\hh')\hh.
\end{equation}
As shown by Hiss and Szczepa\'nski 
\cite{hiss-szczepanski}, {\smallit every compact flat Riemannian manifold 
of dimension greater than one is lat\-tice-re\-duc\-i\-ble}. For more details, 
see the Appendix.

For a Bie\-ber\-bach group $\,\bg\hs$ in a Euclidean af\-fine space 
$\,\mathcal{E}\hs$ and a fixed af\-fine sub\-space $\,\mathcal{E}'$ of 
$\,\mathcal{E}\hs$ parallel to a vector sub\-space $\,\tvs\hh'$ of $\,\tvs\hs$ 
satisfying (\ref{qdr}), we denote by $\,\stb\hh'$ the {\smallit stabilizer 
sub\-group of\/} $\,\mathcal{E}'$ {\smallit relative to the action of\/} 
$\,\bg\nh$, meaning that
\begin{equation}\label{stb}
\stb\hh'\mathrm{\ consists\ of\ all\ the\ elements\ of\ }\,\bg\hs\mathrm{\ 
mapping\ }\,\mathcal{E}'\mathrm{\ onto\ itself.} 
\end{equation}
Let $\,\gamma\in\bg\nh$. From $\,\bg\nh$-in\-var\-i\-ance, cf.\ (\ref{inv}), 
of the foliation of $\,\mathcal{E}\hs$ formed by the cosets of 
$\,\tvs\hh'\nnh$,
\begin{equation}\label{iff}
\gamma\in\stb\hh'\mathrm{\ if\ and\ only\ if\ 
}\,\gamma(\mathcal{E}')\,\mathrm{\ intersects\ }\,\mathcal{E}'\nh.
\end{equation}
\begin{theorem}\label{restr}{\smallit 
Given a lat\-tice-re\-duc\-i\-ble Bie\-ber\-bach 
group\/ $\,\bg\hs$ in a Euclidean af\-fine space\/ $\,\mathcal{E}\hs$ and a 
vector subspace\/ $\,\tvs\hh'$ of\/ $\,\tvs$ with\/ {\rm(\ref{lrd})}, the 
following three conclusions hold.
\begin{enumerate}
  \def\theenumi{{\rm\roman{enumi}}}
\item The af\-fine sub\-spaces of dimension\/ $\,\dim\tvs\hh'$ in\/ 
$\,\mathcal{E}$, parallel to\/ $\,\tvs\hh'\nnh$, are the leaves of a 
foliation\/ $\,F\hskip-3pt_{\mathcal{E}}\w$ on\/ $\,\mathcal{E}\nnh$, 
pro\-ject\-a\-ble under the covering projections\/ 
$\,\mathrm{pr}:\mathcal{E}\to\mathcal{M}=\mathcal{E}/\hn\bg\hs$ and\/ 
$\,\mathcal{E}\to\mathcal{T}=\,\mathcal{E}/\nh L\,$ onto foliations\/ 
$\,F\hskip-3.8pt_{\mathcal{M}}\w\,$ of\/ $\,\mathcal{M}\,$ and\/ 
$\,F\hskip-3pt_{\mathcal{T}}\w$ of the torus\/ 
$\,\mathcal{T}=\,\mathcal{E}/\nh L$, both of which have compact totally 
geodesic leaves, tangent to a parallel distribution.
\item The leaves\/ $\,\mathcal{M}\hn'$ of\/ 
$\,F\hskip-3.8pt_{\mathcal{M}}\w\hs$ 
coincide with the\/ $\,\mathrm{pr}$-im\-ages of leaves\/ $\,\mathcal{E}'$ of\/ 
$\,F\hskip-3pt_{\mathcal{E}}\w$, and the restrictions\/ 
$\,\mathrm{pr}:\mathcal{E}'\nh\to\mathcal{M}\hn'$ are covering projections. 
The same remains true if one replaces $\,\mathcal{M}\,$ and\/ 
$\,\mathrm{pr}\,$ with\/ $\,\mathcal{T}\,$ and the projection\/ 
$\,\mathcal{E}\to\mathcal{T}\nnh$. Any such\/ $\,\mathcal{M}\hn'\nnh$, being a 
compact flat Riemannian manifold, corresponds via Remark\/~{\rm\ref{bijct}} to 
a Bie\-ber\-bach group\/ $\,\bg\hn'$ in the Euclidean af\-fine space\/ 
$\,\mathcal{E}'\nnh$. For\/ $\,L\nh'\nnh,H\nh'$ appearing in the analog\/ 
$\,L\nh'\nh\to\hs\bg\hn'\nh\to H\nh'$ of\/ {\rm(\ref{exa})}, this\/ 
$\,\bg\hn'\nnh$, and\/ $\,\stb\hh'$ defined by\/ {\rm(\ref{stb})},
\begin{enumerate}
  \def\theenumi{{\rm\alph{enumi}}}
\item $\bg\hn'$ consists of the restrictions to\/ $\,\mathcal{E}'$ of all the 
elements of\/ $\,\stb\hh'\nnh$,
\item $H\nh'$ is formed by the restrictions to\/ $\,\tvs\hh'$ of the linear 
parts of elements of \/ $\,\stb\hh'\nnh$,
\item $L\nh'\nh=\bg\hn'\nh\cap\tvs\hh'\nnh$, as in\/ {\rm(\ref{lah})}, and\/ 
$\,L\cap\tvs\hh'\subseteq L\nh'\nnh$.
\end{enumerate}
\end{enumerate}
}
\end{theorem}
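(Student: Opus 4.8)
The plan is to build the foliations from their model on the universal cover and then push them down, using the covering-space machinery already assembled. First I would establish part (i): on $\,\mathcal{E}\,$ the cosets of $\,\tvs\hh'\,$ are the leaves of a parallel (hence integrable) distribution $\,F\hskip-3pt_{\mathcal{E}}\w$, and this distribution is invariant under every affine isometry whose linear part preserves $\,\tvs\hh'\nnh$; since $\,H\,$ is $\,\tvs\hh'$-in\-var\-i\-ant and $\,\bg\,$ has linear parts in $\,H\,$, the deck group $\,\bg\,$ preserves $\,F\hskip-3pt_{\mathcal{E}}\w$, so by the projectability criterion quoted after (\ref{tor}) (deck-invariance $\Leftrightarrow$ projectability under a covering) the distribution projects under both $\,\mathrm{pr}:\mathcal{E}\to\mathcal{M}\,$ and $\,\mathcal{E}\to\mathcal{T}\,$ onto parallel distributions $\,D\hskip-3.8pt_{\mathcal{M}}\w\,$, $\,D\hskip-3pt_{\mathcal{T}}\w$. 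Integrability and flatness of the connection descend, giving totally geodesic leaves, and compactness of the leaves downstairs follows from Lemma~\ref{cplvs} applied to $\,\mathcal{T}=\mathcal{E}/L\,$ (the leaf through the base point is the sub\-to\-rus $\,\tvs\hh'\nnh/(L\cap\tvs\hh')$, since $\,\tvs\hh'\,$ is an $\,L$-sub\-space), together with Lemma~\ref{cptlv}(iv) applied to the further covering $\,\mathcal{T}\to\mathcal{M}\,$ — or directly to $\,\mathcal{E}\to\mathcal{M}\,$ once one knows the $\,\mathcal{T}$-leaves are compact and the $\,\mathcal{E}$-leaves cover them.

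Next, for part (ii), I would identify the leaves of $\,F\hskip-3.8pt_{\mathcal{M}}\w$ with $\,\mathrm{pr}$-im\-ages of affine cosets $\,\mathcal{E}'$: by Lemma~\ref{cptlv}(iii) each such image is a compact leaf, and by (iv)/(iii) every leaf arises this way, with $\,\mathrm{pr}:\mathcal{E}'\to\mathcal{M}\hn'$ a covering projection; the same argument with $\,\mathcal{E}\to\mathcal{T}\,$ in place of $\,\mathrm{pr}\,$ gives the torus version. Being a compact flat Riemannian manifold, $\,\mathcal{M}\hn'$ corresponds via Remark~\ref{bijct} to a Bie\-ber\-bach group $\,\bg\hn'$ in $\,\mathcal{E}'\nnh$; concretely, $\,\bg\hn'\,$ is the deck group of the covering $\,\mathcal{E}'\to\mathcal{M}\hn'\nnh$. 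The core of (ii) is then to show this deck group is exactly the restriction-to-$\,\mathcal{E}'\,$ of the $\,\bg$-stabilizer $\,\stb\hh'\,$ of $\,\mathcal{E}'\nnh$. For (a): if $\,\gamma\in\stb\hh'\,$ then $\,\gamma|_{\mathcal{E}'}\,$ is an isometry of $\,\mathcal{E}'\,$ commuting with $\,\mathrm{pr}$, hence a deck transformation; conversely every deck transformation of $\,\mathcal{E}'\to\mathcal{M}\hn'$ extends to an element of $\,\bg\,$ because two points of $\,\mathcal{E}'\,$ in the same $\,\mathrm{pr}$-fibre are in the same $\,\bg$-orbit, and the $\,\gamma\in\bg\,$ realizing this must then send $\,\mathcal{E}'\,$ into a coset meeting $\,\mathcal{E}'\nnh$, so $\,\gamma\in\stb\hh'\,$ by (\ref{iff}); injectivity of restriction on $\,\stb\hh'\,$ is clear since an affine isometry fixing $\,\mathcal{E}'\,$ pointwise is not the identity only if it acts nontrivially on the normal space, but here it already agrees with a deck transformation. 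Part (b) is then immediate: the linear part of $\,\gamma|_{\mathcal{E}'}\,$ is the restriction to $\,\tvs\hh'\,$ of the linear part of $\,\gamma$, and $\,\lp(\stb\hh')\,$ maps onto $\,H\nh'\,$ under this restriction. For (c), $\,L\nh'=\bg\hn'\cap\tvs\hh'\,$ is the defining equation (\ref{lah}) applied in $\,\mathcal{E}'\nnh$; and if $\,v\in L\cap\tvs\hh'\,$ then translation by $\,v\,$ lies in $\,\bg\,$ and preserves $\,\mathcal{E}'\,$ (translating it within itself), so it lies in $\,\stb\hh'\,$ and restricts to the translation by $\,v\,$ on $\,\mathcal{E}'\nnh$, giving $\,L\cap\tvs\hh'\subseteq L\nh'\nnh$.

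The main obstacle I anticipate is the precise bookkeeping in (ii)(a)–(b): one must be careful that an element $\,\gamma\in\bg\,$ with $\,\gamma(\mathcal{E}')=\mathcal{E}'\,$ really does restrict to an \emph{affine isometry} of $\,\mathcal{E}'$ with linear part $\,\lp(\gamma)|_{\tvs\hh'}\,$ (this needs $\,\lp(\gamma)(\tvs\hh')=\tvs\hh'\nnh$, which holds because $\,\lp(\gamma)\in H\,$ and $\,\tvs\hh'\,$ is $\,H$-in\-var\-i\-ant), and that no information is lost or double-counted when passing between $\,\stb\hh'\,$ and $\,\bg\hn'$ — in particular that distinct elements of $\,\stb\hh'\,$ can restrict to the same element of $\,\bg\hn'$ only if they agree, which again uses that $\,\bg\hn'$ acts \emph{freely} on $\,\mathcal{E}'\,$ so the restriction homo\-mor\-phism $\,\stb\hh'\to\bg\hn'$ is injective on the relevant part. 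A secondary subtlety is merely notational: aligning the abstract sequence $\,L\nh'\to\bg\hn'\to H\nh'$ produced by Remark~\ref{bijct} with the concrete description, which is exactly what (a)–(c) accomplish. Everything else is a direct application of Remark~\ref{covpr}, Lemma~\ref{cptlv}, Lemma~\ref{cplvs}, and the projectability principle, so the write-up should be short.
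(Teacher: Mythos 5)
Your proposal is correct and follows essentially the same route as the paper: project the coset foliation of $\,\tvs\hh'$ from $\,\mathcal{E}\,$ using deck-invariance, factor $\,\mathrm{pr}\,$ through the torus $\,\mathcal{T}=\mathcal{E}/\nh L\,$ to obtain compact leaves, and identify the deck group of $\,\mathrm{pr}:\mathcal{E}'\to\mathcal{M}\hn'$ with the restrictions of $\,\stb\hh'$ via (\ref{iff})/(\ref{inv}) and freeness of the $\,\bg$-action (Remark~\ref{frpdc}), exactly as in the paper's use of (\ref{cmp}) and (\ref{cpr}). One citation needs repair: Lemma~\ref{cptlv}(iii) presupposes a \emph{compact} leaf upstairs, so it cannot be applied directly to $\,\mathrm{pr}:\mathcal{E}\to\mathcal{M}\,$ with the noncompact coset $\,\mathcal{E}'$ (nor to $\,\mathcal{E}\to\mathcal{T}$ for ``the torus version,'' where Lemma~\ref{cplvs} already does the work); instead, as in your own argument for (i) and as in the paper, apply it (or Remark~\ref{covpr}(b)) to $\,\mathcal{T}\to\mathcal{M}\,$ with the compact torus leaf $\,\mathcal{T}\hh'\nh=\mathcal{E}'\nnh/(L\cap\tvs\hh')\,$ and precompose with the universal covering $\,\mathcal{E}'\to\mathcal{T}\hh'$; this is also what yields surjectivity onto the whole leaf $\,\mathcal{M}\hn'\nnh$, i.e.\ that every leaf of $\,F\hskip-3.8pt_{\mathcal{M}}\w$ is a $\,\mathrm{pr}$-image of a coset. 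With that adjustment, your identification of $\,\bg\hn'$ with the restrictions of $\,\stb\hh'$ (forward direction: restrictions commute with $\,\mathrm{pr}$; converse: the element of $\,\bg\,$ matching two points of a fibre lies in $\,\stb\hh'$ by (\ref{iff}), and a deck transformation of the connected covering is determined by its value at one point), together with the ensuing descriptions of $\,H\nh'\nnh$, $\,L\nh'$ and the inclusion $\,L\cap\tvs\hh'\subseteq L\nh'\nnh$, matches the paper's proof, with the injectivity point you raise handled there separately in Remark~\ref{stblz}.
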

We prove Theorem~\ref{restr} in the next section. 
\begin{remark}\label{stblz}The restriction homo\-mor\-phism 
$\,\stb\hh'\nnh\to\bg\hn'\nnh$, cf.\ (ii-a) above, is an {\smallit 
iso\-mor\-phism\hskip.1pt}: nontrivial elements of $\,\stb\hh'\nnh$, being 
fix\-ed-point free (Remark~\ref{frpdc}), have nontrivial restrictions to 
$\,\mathcal{E}'\nnh$. The last inclusion of (ii-c) may be proper; see the end 
of Section~\ref{gk}.
\end{remark}

\section{Proof of Theorem~\ref{restr}}\label{pt}
Projectablity of the foliation $\,F\hskip-3pt_{\mathcal{E}}\w$ under both 
covering projections 
$\,\mathrm{pr}:\mathcal{E}\to\mathcal{M}\,$ and 
$\,\mathcal{E}\to\mathcal{T}\,$ follows as a trivial consequence of the fact 
that, due to $\,H\nh$-in\-var\-i\-ance of $\,\tvs\hh'\nnh$,
\begin{equation}\label{inv}
F\hskip-3pt_{\mathcal{E}}\w\,\mathrm{\ is\ 
}\,\bg\nh\hyp\mathrm{in\-var\-i\-ant\ and, obviously,\ 
}\,L\hyp\mathrm{in\-var\-i\-ant,}
\end{equation}
while Lemma~\ref{cptlv}(ii) implies integrability of the image distribution. 
Next,
\begin{equation}\label{cmp}
\begin{array}{l}
\mathrm{pr}\nh\,\mathrm{\ is\ the\ composite\ 
}\,\nh\mathcal{E}\to\mathcal{T}\nnh\to\mathcal{M}\nh\,\mathrm{\ 
of\ two\ mappings\hskip-3pt:\hskip2.3ptthe}\\
\mathrm{u\-ni\-ver\-sal}\hyp\mathrm{cov\-er\-ing\ projection\ of\ the\ 
flat\ torus\ }\,\hs\mathcal{T}\hn=\,\mathcal{E}/\nh L,\\
\mathrm{and\ the\ quotient\ projection\ for\ the\ action\ of\ 
}\,\,\hs\bg\hs\,\,\mathrm{\ on\ }\,\,\hs\mathcal{T}\nh,
\end{array}
\end{equation}
the latter action clearly becoming free if one replaces $\,\bg\hs$ with 
$\,\bg/\nh L\cong\nh H$. Both factor mappings, 
$\,\mathcal{E}\to\mathcal{T}\,$ and 
$\,\mathcal{T}\nh\to\mathcal{M}$, are covering projections -- the first 
since $\,L\,$ is a lattice in $\,\tvs\nh$, the second due to 
Remark~\ref{covpr}(a). Parts (iii)\hs--\hs(iv) of Lemma~\ref{cptlv}, along 
with Lemma~\ref{cplvs}, may now be applied to the foliations 
$\,F\hskip-3pt_{\mathcal{T}}\w$ and $\,F\hskip-3.8pt_{\mathcal{M}}\w$ of the 
torus $\,\mathcal{T}\,$ and of $\,\mathcal{M}\,$ obtained as projections of 
$\,F\hskip-3pt_{\mathcal{E}}\w$, proving the last (com\-pact-leaves) claim of 
(i).

We now fix a leaf $\,\mathcal{E}'$ of $\,F\hskip-3pt_{\mathcal{E}}\w$, 
and choose a leaf $\,\mathcal{M}\hn'$ of $\,F\hskip-3.8pt_{\mathcal{M}}\w$ 
containing $\,\mathrm{pr}(\mathcal{E}')$, cf.\ Lemma~\ref{cptlv}(i). It 
follows that
\begin{equation}\label{cpr}
\mathrm{pr}:\mathcal{E}'\nh\to\mathcal{M}\hn'\,\mathrm{\ is\ a\ (surjective)\ 
covering\ projection,}
\end{equation}
since (\ref{cmp}) decomposes $\,\mathrm{pr}:\mathcal{E}'\nh\to\mathcal{M}\hn'$ 
into the composition 
$\,\mathcal{E}'\nh\to\mathcal{T}\hh'\nnh\to\nh\mathcal{M}\hn'\nnh$, 
in which the first mapping is the u\-ni\-ver\-sal-cov\-er\-ing projection of 
the torus $\,\mathcal{T}\hh'\nh=\hs\mathcal{E}'\nnh/\nh L\nh'\nnh$, and the 
second one must be a covering due to Remark~\ref{covpr}(b).

Two points of $\,\mathcal{E}'$ have the same $\,\mathrm{pr}$-im\-age if and 
only if one is transformed into the other by an element of the group 
$\,\bg\hn'$ described in assertion (ii). (Namely, the `only if' part follows 
since, given $\,x,y\in\mathcal{E}'$ with $\,\mathrm{pr}(x)=\mathrm{pr}(y)\,$ 
in $\,\mathcal{M}=\mathcal{E}/\hn\bg\nh$, the element of $\,\bg$ sending 
$\,x\,$ to $\,y\,$ must lie in $\,\bg\hn'\nnh$, or else, by (\ref{inv}), it 
would map $\,\mathcal{E}'$ onto a {\smallit different\/} leaf of the foliation 
$\,F\hskip-3pt_{\mathcal{E}}\w$.) Furthermore, the action of $\,\bg\hn'$ on 
$\,\mathcal{E}'$ is free due to Remark~\ref{frpdc}. Thus, $\,\bg\hn'$ 
coincides with the deck transformation group for the u\-ni\-ver\-sal 
cov\-er\-ing projection (\ref{cpr}). Now (ii) is a consequence of 
Lemma~\ref{cptlv}, Remark~\ref{bijct} and the definitions of the data 
(\ref{exa}) for any Bie\-ber\-bach group $\,\bg\nh$, applied to our 
$\,\bg\hn'\nnh$.

\section{Geometries of individual leaves}\label{gi}
Throughout this section we adopt the assumptions and notations of 
Theorem~\ref{restr}. The $\,\bg\nh$-in\-var\-i\-ance of the foliation 
$\,F\hskip-3pt_{\mathcal{E}}\w$, cf.\ (\ref{inv}), trivially gives rise to
\begin{equation}\label{act}
\mathrm{the\ obvious\ isometric\ action\ of\ }\,\bg\hs\mathrm{\ on\ the\ 
quotient\ Euclidean\ af\-fine\ space\ }\,\mathcal{E}\nnh/\hn\tvs\hh'
\end{equation}
(that is, on the leaf space of $\,F\hskip-3pt_{\mathcal{E}}\w$, the points of 
which coincide with the af\-fine sub\-spaces $\,\mathcal{E}'$ of 
$\,\mathcal{E}\hs$ parallel to $\,\tvs\hh'$). Whenever 
$\,\mathcal{E}'\nnh\in\mathcal{E}\nnh/\hn\tvs\hh'$ is fixed, $\,\stb\hh'$ in 
(\ref{stb}) obviously coincides with the iso\-tropy group of $\,\mathcal{E}'$  
for (\ref{act}). The action (\ref{act}) is not effective, as the kernel of the 
corresponding homo\-mor\-phism 
$\,\bg\to\mathrm{Iso}\,[\mathcal{E}\nnh/\hn\tvs\hh']\,$ clearly contains the 
group $\,L\nh'\nh=L\cap\tvs\hh'$ forming a lattice in $\,\tvs\hh'\nnh$, cf.\ 
Definition~\ref{lsbsp} and Remark~\ref{lttce}(b). The final clause of 
Remark~\ref{cnjug}, combined with $\,H\nh$-in\-var\-i\-ance of 
$\,\tvs\hh'\nnh$, shows that $\,L\nh'$ is a normal sub\-group of $\,\bg\nh$, 
which leads to a further homo\-mor\-phism 
$\,\bg/\nh L\nh'\nh\to\mathrm{Iso}\,[\mathcal{E}\nnh/\hn\tvs\hh']\,$ (still in 
general noninjective; see Remark~\ref{nninj} below). Let 
$\,\mathrm{pr}\,$ again stand for the covering projection 
$\,\mathcal{E}\to\mathcal{M}=\mathcal{E}/\hn\bg\nh$.
\begin{remark}\label{uncvr}Given 
$\,\mathcal{E}'\nnh\in\mathcal{E}\nnh/\hn\tvs\hh'$ and a vector 
$\,v\in\tvs\hs$ orthogonal to $\,\tvs\hh'\nnh$, let us set 
$\,\mathcal{M}_{\hn v}'\nh=\mathrm{pr}(\mathcal{E}'\nnh+v)$, so that 
$\,\mathcal{M}_{\hn0}'=\mathcal{M}\hn'\nnh$, cf.\ (\ref{cpr}). By (\ref{cpr}), 
$\,\mathcal{M}_{\hn v}'$ must be a (compact) leaf of 
$\,F\hskip-3.8pt_{\mathcal{M}}\w$, and 
$\,\mathrm{pr}:\mathcal{E}'\nnh+v\to\mathcal{M}_{\hn v}'$ is a 
lo\-cal\-ly-i\-so\-met\-ric u\-ni\-ver\-sal-cov\-er\-ing projection. Also, we
\begin{equation}\label{svp}
\begin{array}{l}
\mathrm{choose\ }\,\hs\rd\hs\,\mathrm{\ as\ in\ Remark~\ref{nrexp}\ and\ 
Lemma~\ref{fltmf}\ for\ the\ sub\-man\-i\-fold}\\
\mathcal{M}\hn'\nnh=\mathrm{pr}(\mathcal{E}')\mathrm{,\ 
cf.\ (\ref{cpr}),\ of\ the\ compact\ flat\ manifold\ 
}\,\mathcal{M}=\mathcal{E}/\hn\bg\nnh,\\
\mathrm{and\ denote\ by\ }\,\stb_{\nnh v}'\hh\subseteq\nh\bg\,\mathrm{\ the\ 
iso\-tropy\ group\ of\ }\,\mathcal{E}'\nnh+v\mathrm{,\ as\ in\ (\ref{stb}).}
\end{array} 
\end{equation}
\end{remark}
\begin{lemma}\label{isolv}{\smallit 
Under the above hypotheses, for any\/ 
$\,\mathcal{E}'\nnh\in\mathcal{E}\nnh/\hn\tvs\hh'$ there exists\/ 
$\,\rd\in(0,\infty)$ such that, whenever\/ $\,u\in\tvs\hs$ is a unit vector 
orthogonal to\/ $\,\tvs\hh'$ and\/ $\,r,s\in(0,\rd)$, the iso\-me\-tries\/ 
$\,\mathcal{E}'\nnh+ru\to\mathcal{E}'\nnh+su\,$ and 
$\,\mathcal{E}'\nnh+ru\to\mathcal{E}'$ of translations by the vectors 
$\,(s-r)u\,$ and, respectively, $\,-ru\hh$, descend under the 
u\-ni\-ver\-sal-cov\-er\-ing projections of Remark\/~{\rm\ref{uncvr}}, with\/ 
$\,v\,$ equal to\/ $\,ru,su\,$ or\/ $\,0$, to an iso\-me\-try\/ 
$\,\mathcal{M}_{\hn ru}'\nh\to\mathcal{M}_{\hn su}'$ or, respectively, a\/ 
$\,k$-fold covering projection\/ 
$\,\mathcal{M}_{\hn ru}'\nh\to\mathcal{M}\hn'\nnh$, where the integer\/ 
$\,k=k(u)\ge1\,$ may depend on $\,u$, but not on\/ $\,r$.
}
\end{lemma}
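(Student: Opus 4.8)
The idea is that all the maps in question already exist on the level of the affine space $\,\mathcal{E}\,$ as translations, and the only issue is that these translations need not descend to the quotients by the Bieberbach groups $\,\bg\hn',\bg\hn'_{\hskip-1pt v}$ acting on the parallel affine subspaces $\,\mathcal{E}'+v\,$ (where $\,v\,$ ranges over vectors orthogonal to $\,\tvs\hh'$). Choosing $\,\rd\,$ as in Remark~\ref{nrexp} and Lemma~\ref{fltmf} applied to $\,\mathcal{M}\hn'=\mathrm{pr}(\mathcal{E}')$, one knows that for $\,r\in(0,\rd)\,$ the leaf $\,\mathcal{M}_{\hn ru}'$ coincides with a level set of $\,\mathrm{dist}(\mathcal{M}\hn',\,\cdot\,)$, and Lemma~\ref{fltmf}(b)--(c) identifies the geometry of $\,\mathcal{M}_{\hn ru}'$ via the flat normal connection: the leaves $\,\mathcal{M}_{\hn ru}'$, for a fixed unit normal direction $\,u$, are all obtained from one another by the parallel-transport ``sliding'' induced by the normal field $\,ru$. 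This is exactly the content one wants, reinterpreted downstairs.

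More concretely, first I would fix $\,\mathcal{E}'$ and pick $\,\rd\,$ as above. For the ``isometry'' statement: the translation $\,T_{(s-r)u}:\mathcal{E}'+ru\to\mathcal{E}'+su\,$ by $\,(s-r)u\,$ conjugates the isotropy group $\,\stb_{\nnh ru}'$ into $\,\stb_{\nnh su}'$ — this is because both isotropy groups, by Remark~\ref{trprt} and Remark~\ref{afext}, consist of affine extensions of the same linear data (the subgroup of $\,H\,$ stabilizing $\,\tvs\hh'$ together with compatible translational parts in $\,\tvs\hh'$), and conjugation by a translation normal to $\,\tvs\hh'$ preserves this description. Hence $\,T_{(s-r)u}$ descends to a well-defined map $\,\mathcal{M}_{\hn ru}'\to\mathcal{M}_{\hn su}'$, which is an isometry because $\,T_{(s-r)u}$ is one and the covering projections of Remark~\ref{uncvr} are local isometries; its inverse is the descent of $\,T_{(r-s)u}$. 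For the ``covering'' statement: the translation $\,T_{-ru}:\mathcal{E}'+ru\to\mathcal{E}'$ conjugates $\,\stb_{\nnh ru}'$ into a subgroup of $\,\stb\hh'=\stb_{\nnh0}'$, so it descends to a locally isometric map $\,\mathcal{M}_{\hn ru}'\to\mathcal{M}\hn'$, which by Remark~\ref{covpr}(b) is a finite covering projection, since $\,\mathcal{M}_{\hn ru}'$ is compact and $\,\mathcal{M}\hn'$ connected. The degree $\,k(u)=[\stb\hh':T_{-ru}\stb_{\nnh ru}'T_{ru}^{-1}]\,$ is the index of this subgroup.

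It remains to see that $\,k(u)\,$ does not depend on $\,r\in(0,\rd)$. This follows by combining the two constructions: for $\,r,s\in(0,\rd)$ the diagram of descended maps $\,\mathcal{M}_{\hn ru}'\to\mathcal{M}_{\hn su}'\to\mathcal{M}\hn'$ and $\,\mathcal{M}_{\hn ru}'\to\mathcal{M}\hn'$ commutes (it commutes already upstairs, where all three arrows are translations and $\,T_{-su}\circ T_{(s-r)u}=T_{-ru}$), and the first arrow $\,\mathcal{M}_{\hn ru}'\to\mathcal{M}_{\hn su}'$ is an isometry, hence a one-fold covering; by multiplicativity of covering degrees, the degrees of $\,\mathcal{M}_{\hn ru}'\to\mathcal{M}\hn'$ and $\,\mathcal{M}_{\hn su}'\to\mathcal{M}\hn'$ agree. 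Equivalently, $\,T_{(s-r)u}$ conjugates $\,\stb_{\nnh ru}'$ isomorphically onto $\,\stb_{\nnh su}'$, so the two subgroups of $\,\stb\hh'$ obtained by further conjugating into $\,\stb_{\nnh0}'$ are conjugate in a way compatible with the inclusion, giving equal index.

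The main obstacle is the verification that conjugation by a normal translation carries $\,\stb_{\nnh ru}'$ into $\,\stb_{\nnh su}'$ (and into $\,\stb\hh'$) — i.e.\ the precise bookkeeping of how the ``translational part'' of an element $\,\gamma\in\stb_{\nnh ru}'$ transforms under a change of base affine subspace within the same coset of $\,\tvs\hh'$ in the normal direction. This is where Remarks~\ref{trprt}, \ref{afext} and \ref{nrsbg} do the work: because $\,\gamma\,$ fixes the affine subspace $\,\mathcal{E}'+ru\,$ setwise and its linear part preserves $\,\tvs\hh'$ and the normal complement, $\,\gamma\,$ is the affine extension, based at $\,\mathcal{E}'+ru$, of a linear isometry of the normal complement fixing $\,u\,$ (since $\,u\,$ points toward $\,\mathcal{M}\hn'$, which $\,\gamma\,$ must respect as it realizes the nearest-point structure); such a $\,\gamma\,$ fixes the whole normal line through its contact point, so conjugating by $\,T_{(s-r)u}$ — which commutes with $\,\gamma\,$ modulo $\,\tvs\hh'$ — lands in $\,\stb_{\nnh su}'$, and the analogous statement with $\,s=0\,$ gives the covering. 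Everything else is routine descent of maps through covering projections.
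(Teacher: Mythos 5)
Your plan inverts the paper's logical order, and at its central step it assumes a fact that is not yet available. Everything hinges on the claim that translation by a normal vector $v$ ($v=(s-r)u$ or $-ru$) conjugates $\stb_{ru}'$ into $\stb_{su}'$, resp.\ $\stb_0'$. Writing $\gamma\in\stb_{ru}'$ as $x\mapsto o+A(x-o)+b$, one computes $T_v\circ\gamma\circ T_v^{-1}=\tau_{(1-A)v}\circ\gamma$, so the conjugate lies in $\bg$ (and descent of $T_v$ through $\mathrm{pr}$ makes sense) precisely when $(1-A)v\in L$, which for small generic $v$ forces $Au=u$. Thus your argument needs: every element of $\stb_{ru}'$ has linear part fixing $u$, equivalently $\gamma(\mathcal{E}')=\mathcal{E}'$, i.e.\ $\stb_{ru}'\subseteq\stb_0'$. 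This is exactly the content of Lemma~\ref{cmmut}(b)--(c), which the paper proves \emph{after} Lemma~\ref{isolv} and \emph{using} it (together with Remark~\ref{delta}, Baire and analyticity). Your justification --- ``$u$ points toward $\mathcal{M}\hn'$, which $\gamma$ must respect as it realizes the nearest-point structure'' --- is not a proof: $\gamma$ is a deck transformation, it need not preserve $\mathcal{E}'$ a priori; it sends $\mathcal{E}'$ to the coset $\mathcal{E}'+r(u-Au)$, which also lies in $\mathrm{pr}^{-1}(\mathcal{M}\hn')$ and projects onto $\mathcal{M}\hn'$, so nothing about the nearest-point map downstairs immediately singles out $\mathcal{E}'$ among the preimage cosets. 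Ruling out $Au\ne u$ requires a quantitative statement of the form ``no coset of $\mathrm{pr}^{-1}(\mathcal{M}\hn')$ other than $\mathcal{E}'$ comes within $2\rd$ of $\mathcal{E}'$'', proved from the embeddedness and uniqueness properties of $\mathrm{Exp}^\perp$ in Remark~\ref{nrexp} --- this is the kind of argument the paper carries out in Remark~\ref{delta} (shrinking $\rd$ by a factor of $4$). Such an argument is possible, but you neither give it nor flag that it is needed; Remarks~\ref{trprt}, \ref{afext}, \ref{nrsbg}, which you cite, contain no such information. (Also, calling $\gamma$ an ``affine extension based at $\mathcal{E}'+ru$'' is wrong as stated: elements of $\stb_{ru}'$ act nontrivially on the leaf, they do not fix it pointwise as in Remark~\ref{afext}.)

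For contrast, the paper's proof avoids the group-theoretic issue altogether: with $\rd$ as in (\ref{svp}) it defines $\psi_c:\mathcal{M}\nh_\rd\w\to\mathcal{M}\nh_\rd\w$ as the conjugate, under $\mathrm{Exp}^\perp\nnh$, of fiberwise multiplication by $c$ in the normal disk bundle, and verifies the identity $\mathrm{pr}\circ\phi=\psi_c\circ\mathrm{pr}$ on $\mathcal{E}'\nnh+ru$ (with $c=s/r$ or $c=0$) using the fact that $\mathrm{pr}$ maps radial segments to minimizing normal geodesics. Descent is then automatic, the covering property comes from Remark~\ref{covpr}(b), and the independence of $k$ from $r$ follows from the composition argument you also give (which is fine). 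If you want to keep your conjugation route, you must first prove $\stb_{ru}'\subseteq\stb_0'$ and $Au=u$ by a Remark~\ref{delta}-type tube argument, at the cost of possibly shrinking $\rd$; as written, your proof borrows a later lemma whose proof rests on the statement you are proving.
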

\begin{proof}For $\,\rd\,$ selected in (\ref{svp}) and any 
$\,c\in[\hs0,1]$, let 
$\,\psi\nnh_c\w:\mathcal{M}\nh_\rd\w\to\nh\mathcal{M}\nh_\rd\w$ correspond, 
under the $\,\mathrm{Exp}^\perp\nnh$-dif\-feo\-mor\-phic identification of 
Remark~\ref{nrexp}(a), to the mapping 
$\,\mathcal{N}\hskip-3pt_\rd\w\to\nh\mathcal{N}\hskip-3pt_\rd\w$ which 
multiplies vectors normal to $\,\mathcal{M}\hn'$ by the scalar $\,c$. With 
$\,\phi\,$ denoting our iso\-me\-try 
$\,\mathcal{E}'\nnh+ru\to\mathcal{E}'\nnh+su\,$ (or, 
$\,\mathcal{E}'\nnh+ru\to\mathcal{E}'$) we now have 
$\,\mathrm{pr}\circ\phi=\psi\nnh_c\w\nh\circ\hh\mathrm{pr}\,$ on 
$\,\mathcal{E}'\nnh+ru$, where $\,c=s/r\,$ 
(or, respectively, $\,c=0$) \,since, given $\,x\in\mathcal{E}'\nnh$, the 
$\,\mathrm{pr}$-im\-age of the line segment $\,\{x+tu:\in[\hs0,\rd)\}$ in 
$\,\mathcal{E}\hs$ is the length $\,\rd\,$ minimizing geodesic segment in 
$\,\mathcal{M}\nh_\rd\w$ emanating from the point 
$\,y=\mathrm{pr}(x)\in\mathcal{M}\hn'$ in a direction normal to 
$\,\mathcal{M}\hn'\nnh$, and $\,\mathrm{pr}\circ\phi\,$ sends $\,x+tu$, in 
both cases, to $\,\mathrm{pr}(x+ctu)=\psi\nnh_c\w(\mathrm{pr}(x+tu))$. The 
$\,\mathrm{pr}$-im\-age of $\,\phi(z)$, for any $\,z\in\mathcal{E}'\nnh+ru$, 
thus depends only on $\,\mathrm{pr}(z)\,$ (by being its 
$\,\psi\nnh_c\w\nh$-im\-age), and so both original isometries $\,\phi\,$ 
descend to (necessarily lo\-cal\-ly-i\-so\-met\-ric) mappings 
$\,\mathcal{M}_{\hn ru}'\nh\to\mathcal{M}_{\hn su}'$ and 
$\,\mathcal{M}_{\hn ru}'\nh\to\mathcal{M}\hn'\nnh$, which constitute finite 
coverings (Remark~\ref{covpr}(b)). The former is also bi\-jec\-tive, 
its inverse arising when one switches $\,r\hs$ and $\,s$. As the composite 
$\,\mathcal{M}_{\hn su}'\nh\to\mathcal{M}_{\hn ru}'\nh\to\mathcal{M}\hn'$ 
clearly equals the analogous covering projection 
$\,\mathcal{M}_{\hn su}'\nh\to\mathcal{M}\hn'$ (with $\,s\,$ rather than 
$\,r$), the coverings $\,\mathcal{M}_{\hn ru}'\nh\to\mathcal{M}\hn'$ and 
$\,\mathcal{M}_{\hn su}'\nh\to\mathcal{M}\hn'$ have the same multiplicity, 
which completes the proof.
\end{proof}
\begin{remark}\label{delta}Replacing $\,\rd\,$ of (\ref{svp}) with $\,1/4\,$ 
times its original value, we can also require it to have the following 
property: {\smallit if\/ $\,\gamma\in\bg\hs$ and\/ $\hs\,x\in\mathcal{E}\hs$ 
are such that both\/ $\,x$ and\/ $\,\gamma(x)\,$ lie in the\/ 
$\,\rd$-neigh\-bor\-hood of\/ $\,\mathcal{E}'\nnh$, cf.\ 
Section\/~{\rm\ref{as}}, then\/ $\,\gamma\in\stb\hh'$ for the stabilizer 
group\/ $\,\stb\hh'$ of\/ $\,\mathcal{E}'$ defined by\/ {\rm(\ref{stb})}}. 
In fact, letting $\,\mathcal{E}''$ be the leaf of 
$\,F\hskip-3pt_{\mathcal{E}}\w$ through $\,x$, we see from (\ref{inv}) that 
its $\,\gamma$-im\-age $\,\gamma(\mathcal{E}'')\,$ is also a leaf of 
$\,F\hskip-3pt_{\mathcal{E}}\w$, while both leaves are within the distance 
$\,\rd\,$ from $\,\mathcal{E}'\nnh$, which gives 
$\,\mathrm{dist}\hh(\mathcal{E}''\nnh,\gamma(\mathcal{E}''))<2\rd\,$ and 
so, due to the triangle inequality, 
$\,\mathrm{dist}\hh(\mathcal{E}'\nnh,\gamma(\mathcal{E}'))
\le\mathrm{dist}\hh(\mathcal{E}'\nnh,\mathcal{E}'')
+\mathrm{dist}\hh(\mathcal{E}''\nnh,\gamma(\mathcal{E}''))
+\mathrm{dist}\hh(\gamma(\mathcal{E}''),\gamma(\mathcal{E}'))
<\rd+2\rd+\rd=4\rd$. Thus, $\,x+ru\in\gamma(\mathcal{E}')\,$ for some 
$\,x\in\mathcal{E}'\nnh$, some unit vector $\,u\in\tvs\hs$ orthogonal to 
$\,\tvs\hh'\nnh$, and 
$\,r=\mathrm{dist}\hh(\mathcal{E}'\nnh,\gamma(\mathcal{E}'))\in[\hs0,4\rd)$. 
Assuming now (\ref{svp}) with $\,\rd\,$ replaced by $\,4\rd$, one gets 
$\,r=0$, that is, $\,\gamma(\mathcal{E}')=\mathcal{E}'$ and 
$\,\gamma\in\stb\hh'\nnh$. Namely, the $\,\mathrm{pr}$-im\-age of the curve 
$\hs[\hs0,4\rd)\ni t\mapsto x+tu\hs$ is a geodesic in the image of the 
dif\-feo\-mor\-phism $\,\mathrm{Exp}^\perp$ of Remark~\ref{nrexp}(a), which 
intersects $\,\mathcal{M}\hn'$ only at $\,t=0$, while 
$\,\mathcal{M}\hn'\nh=\mathrm{pr}(\mathcal{E}')
=\mathrm{pr}(\gamma(\mathcal{E}'))$, since 
$\,\mathcal{M}=\mathcal{E}/\hn\bg\nh$.
\end{remark}
\begin{lemma}\label{cmmut}{\smallit 
Let there be given\/ $\,\tvs\hh'\nnh,\mathcal{E}'$ as in 
Lemma\/~{\rm\ref{isolv}}, $\,\rd\,$ having the additional property of 
Remark\/~{\rm\ref{delta}}, any\/ $\,r\in(0,\rd)$, and any unit vector\/ 
$\,u\in\tvs$ orthogonal to\/ $\,\tvs\hh'\nnh$.
\begin{enumerate}
  \def\theenumi{{\rm\alph{enumi}}}
\item The iso\-tropy group\/ $\,\stb_{\hskip-1ptr\hn u}'$, cf.\ 
{\rm(\ref{svp})}, does not depend on\/ $\,r\in(0,\rd)$.
\item The linear part of each element of\/ $\,\stb_{\hskip-1ptr\hn u}'$ 
keeps\/ $\,u\,$ fixed.
\item $\stb_{\hskip-1ptr\hn u}'$ is a sub\-group of\/ $\,\stb_{\nh0}'$ 
with the finite index\/ $\,k=k(u)\ge1\,$ of \,Lemma\/~{\rm\ref{isolv}},
\item $\mathrm{pr}:\mathcal{E}\to\mathcal{M}\,$ 
maps the\/ $\,\rd$-neigh\-bor\-hood\/ $\,\mathcal{E}\nnh_\rd\w$ of\/ 
$\,\mathcal{E}'$ in\/ $\,\mathcal{E}\hs$ onto\/ $\,\mathcal{M}\nh_\rd\w\hs$ of 
\,Remark\/~{\rm\ref{nrexp}(a)}.
\item $\mathcal{E}\nnh_\rd\w$ \ and\/ $\,\mathcal{M}\nh_\rd\w\,$ are unions of 
leaves of, respectively, $\,F\hskip-3pt_{\mathcal{E}}\w\,$ and\/ 
$\,F\hskip-3.8pt_{\mathcal{M}}\w$.
\item[{\rm(\hs f\hskip1.4pt)}] The pre\-im\-age under\/ 
$\,\mathrm{pr}:\mathcal{E}\nnh_\rd\w\nh\to\mathcal{M}\nh_\rd\w$ of the leaf\/ 
$\,\mathcal{M}_{\hn ru}'=\hs\mathrm{pr}(\mathcal{E}'\nnh+ru)\,$ of\/ 
$\,F\hskip-3.8pt_{\mathcal{M}}\w\,$ equals the union of the images\/ 
$\,\gamma(\mathcal{E}'\nnh+ru)\,$ over all\/ $\,\gamma\in\stb_{\nh0}'$.
\end{enumerate}
}
\end{lemma}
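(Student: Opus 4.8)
The whole argument rests on the selection property of $\,\rd\,$ recorded in Remark~\ref{delta}: whenever $\,\gamma\in\bg\,$ and $\,x\in\mathcal{E}\,$ are such that $\,x\,$ and $\,\gamma(x)\,$ both lie in the $\,\rd$-neigh\-bor\-hood $\,\mathcal{E}_\rd\,$ of $\,\mathcal{E}'$, then $\,\gamma\in\stb'$ --- the stabilizer, as in (\ref{stb}), of $\,\mathcal{E}'$, which in the notation of (\ref{svp}) is $\,\stb'_0$. The only other ingredients are elementary affine geometry: for $\,\gamma\in\bg\,$ with linear part $\,A=\lp(\gamma)$, both $\,\tvs'$ and its orthogonal complement are $\,A$-in\-var\-i\-ant, so $\,\gamma(\mathcal{E}'+v)=\gamma(\mathcal{E}')+Av\,$ with $\,Av\,$ orthogonal to $\,\tvs'$ whenever $\,v\,$ is; for vectors $\,v,v'$ orthogonal to $\,\tvs'$, the cosets $\,\mathcal{E}'+v\,$ and $\,\mathcal{E}'+v'$ coincide precisely when $\,v=v'$; and translation by a vector of $\,\tvs'$ leaves $\,\mathrm{dist}(\mathcal{E}',\,\cdot\,)\,$ unchanged.

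The first three items come out together. Since $\,\mathcal{E}'+ru\subseteq\mathcal{E}_\rd$ --- its points lie at distance $\,r<\rd\,$ from $\,\mathcal{E}'$ --- any $\,\gamma\in\stb'_{ru}\,$ maps a point of $\,\mathcal{E}_\rd\,$ back into $\,\mathcal{E}_\rd$, so Remark~\ref{delta} yields $\,\stb'_{ru}\subseteq\stb'_0$. Conversely, for $\,\gamma\in\stb'_0\,$ we get $\,\gamma(\mathcal{E}'+ru)=\mathcal{E}'+r\lp(\gamma)u$, and, as $\,\lp(\gamma)u\,$ and $\,u\,$ are orthogonal to $\,\tvs'$, this equals $\,\mathcal{E}'+ru\,$ exactly when $\,\lp(\gamma)u=u$. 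Hence $\,\stb'_{ru}=\{\gamma\in\stb'_0:\lp(\gamma)u=u\}$, a sub\-group of $\,\stb'_0\,$ that does not depend on $\,r\in(0,\rd)$; this proves (a), (b) and the first assertion of (c).

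For the index statement in (c), recall from Remark~\ref{uncvr} that $\,\mathrm{pr}:\mathcal{E}'+ru\to\mathcal{M}'_{ru}\,$ is a universal covering, whose deck group --- by the same reasoning as in the proof of Theorem~\ref{restr} --- is the restriction of $\,\stb'_{ru}$; thus $\,\mathcal{M}'_{ru}=(\mathcal{E}'+ru)/\stb'_{ru}$, while $\,\mathcal{M}'=\mathcal{E}'/\stb'_0$. The translation $\,T\,$ by $\,-ru$, which carries $\,\mathcal{E}'+ru\,$ isometrically onto $\,\mathcal{E}'$, satisfies $\,T\gamma T^{-1}=\gamma|_{\mathcal{E}'}\,$ for every $\,\gamma\in\stb'_{ru}$, since then $\,\lp(\gamma)u=u\,$ and $\,\gamma(\mathcal{E}')=\mathcal{E}'$; so $\,T\,$ intertwines the $\,\stb'_{ru}$-actions and descends to an isometric identification of $\,\mathcal{M}'_{ru}\,$ with $\,\mathcal{E}'/\stb'_{ru}$. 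Under this identification the covering $\,\mathcal{M}'_{ru}\to\mathcal{M}'\,$ induced by $\,T$, which by Lemma~\ref{isolv} has $\,k(u)\,$ sheets, becomes the natural projection $\,\mathcal{E}'/\stb'_{ru}\to\mathcal{E}'/\stb'_0$, whose number of sheets is $\,[\stb'_0:\stb'_{ru}]$; comparing the two, $\,[\stb'_0:\stb'_{ru}]=k(u)$, in particular finite.

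Items (d)--(f) are then quick. For (d): $\,\mathrm{pr}\,$ is a Riemannian covering and $\,\mathrm{pr}(\mathcal{E}')=\mathcal{M}'\,$ by (\ref{cpr}), so $\,\mathrm{pr}\,$ cannot increase the distance to $\,\mathcal{M}'$, which gives $\,\mathrm{pr}(\mathcal{E}_\rd)\subseteq\mathcal{M}_\rd$; conversely, given $\,x'\in\mathcal{M}_\rd$, lift the minimizing normal geodesic from $\,\mathcal{M}'$ to $\,x'$ of Remark~\ref{nrexp}(b), starting at a point of $\,\mathcal{E}'$ lying over its initial point, to a geodesic of the same length $\,<\rd$; since $\,\mathrm{pr}\,$ is a local isometry taking $\,\tvs'$ to the tangent space of $\,\mathcal{M}'$, this lift is normal to $\,\mathcal{E}'$, so its endpoint lies in $\,\mathcal{E}_\rd\,$ and maps to $\,x'$. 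For (e): $\,\mathcal{E}_\rd\,$ is invariant under $\,\tvs'$-translations, hence a union of leaves of $\,F_{\mathcal{E}}$, and $\,\mathcal{M}_\rd\,$ is a union of leaves of $\,F_{\mathcal{M}}\,$ by Lemma~\ref{fltmf}(a) --- or project the former statement through (d). For (f): if $\,\gamma\in\stb'_0$, then $\,\gamma(\mathcal{E}'+ru)=\mathcal{E}'+r\lp(\gamma)u\subseteq\mathcal{E}_\rd\,$ and $\,\mathrm{pr}(\gamma(\mathcal{E}'+ru))=\mathrm{pr}(\mathcal{E}'+ru)=\mathcal{M}'_{ru}$, so the asserted union is contained in the preimage; conversely, if $\,z\in\mathcal{E}_\rd\,$ with $\,\mathrm{pr}(z)\in\mathcal{M}'_{ru}$, write $\,z=\gamma(w)\,$ where $\,w\in\mathcal{E}'+ru\subseteq\mathcal{E}_\rd\,$ and $\,\gamma\in\bg$, and then Remark~\ref{delta} forces $\,\gamma\in\stb'_0$, whence $\,z\in\gamma(\mathcal{E}'+ru)$. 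The only place demanding genuine care is the covering-theoretic bookkeeping in (c), and the conjugation identity $\,T\gamma T^{-1}=\gamma|_{\mathcal{E}'}\,$ makes even that routine.
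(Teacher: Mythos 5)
Your proof is correct, and for parts (a)--(c) it takes a genuinely different -- and noticeably shorter -- route than the paper's. The paper does not invoke the Remark~\ref{delta} property of $\,\rd\,$ for (a)--(c): it compares the stabilizers $\,\stb_{\nnh su}'$ for varying $\,s\,$ by means of the descended isometries of Lemma~\ref{isolv}, producing for each $\,\gamma\in\stb_{\hskip-1ptr\hn u}'$ and each point an element $\,\hat\gamma\in\stb_{\nnh su}'\nnh$, upgrading this pointwise relation to $\,\hat\gamma=\tau\nnh_v\w\circ\gamma\circ\tau\nnh_v^{-\nh1}$ via Baire's theorem and real-analyticity, and then using discreteness of $\,\bg\,$ to force $\,\hat\gamma=\gamma$; the index $\,k(u)\,$ is obtained afterwards by an orbit--stabilizer count on the $\,k$-element set of leaves appearing in (f). You instead apply Remark~\ref{delta} directly (legitimate, since the hypotheses grant $\,\rd\,$ that property) to get $\,\stb_{\hskip-1ptr\hn u}'\subseteq\stb_{\nh0}'$ in one line, and the elementary coset computation $\,\gamma(\mathcal{E}'\nnh+ru)=\mathcal{E}'\nnh+r\lp(\gamma)u\,$ -- valid because $\,\lp(\gamma)\in H\,$ is orthogonal and preserves $\,\tvs\hh'\nnh$, hence also its orthogonal complement -- yields the characterization $\,\stb_{\hskip-1ptr\hn u}'=\{\gamma\in\stb_{\nh0}':\lp(\gamma)u=u\}$, from which (a), (b) and the subgroup claim of (c) are immediate; the index is then identified with $\,k(u)\,$ by conjugating the deck action with the translation by $\,-ru\,$ (where $\,\lp(\gamma)u=u\,$ is used again) and comparing the Lemma~\ref{isolv} covering with the natural projection $\,\mathcal{E}'\nnh/\hs\stb_{\hskip-1ptr\hn u}'\to\mathcal{E}'\nnh/\hs\stb_{\nh0}'$, whose sheet number is the index -- a clean covering-space bookkeeping that replaces the paper's geodesic count, and which rests, as you note, on the deck-group identification already established in the proof of Theorem~\ref{restr}. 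What the paper's heavier argument buys is that (a), (b) and the inclusion $\,\stb_{\hskip-1ptr\hn u}'\subseteq\stb_{\nh0}'$ are established using only the $\,\rd\,$ of (\ref{svp}), without the fourfold shrinking performed in Remark~\ref{delta}; your argument buys brevity and transparency at the cost of leaning on that stronger choice of $\,\rd\,$ throughout -- which the lemma's hypotheses explicitly provide, so nothing is lost for the statement as given. Parts (d)--(f) are handled essentially as in the paper, with your (d) and (f) merely spelling out the lifting and Remark~\ref{delta} steps the paper leaves terse.
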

\begin{proof}By (\ref{cpr}) and (\ref{svp}), 
$\,\mathcal{M}_{\hn v}'=(\mathcal{E}'\nnh+v)\hs/\hn\bg_{\nnh v}'$, if one lets 
$\,\bg_{\nnh v}'$ denote the image of $\,\stb_{\nnh v}'$ under the 
injective homo\-mor\-phism of restriction to $\,\mathcal{E}'\nnh$, cf.\ 
Remark~\ref{stblz}. Fixing $\,s\in[\hs0,\rd)$ and $\,r\in(0,\rd)\,$ we 
therefore conclude from Lemma~\ref{isolv} and (\ref{iff}) that, whenever 
$\,x\in\mathcal{E}'\nnh+ru$ and $\,\gamma\in\stb_{\hskip-1ptr\hn u}'$, 
there exists $\,\hat\gamma\in\stb_{\nnh su}'$ satisfying the condition
\begin{equation}\label{gxp}
\gamma(x)\,+\,v\,=\,\hat\gamma(x+v)\hh,\hskip12pt\mathrm{where\ 
}\,\,v=(s-r)u\hh,\hskip8pt\mathrm{and\ }\,\,\hat\gamma=\gamma\,\,\mathrm{\ 
when\ }\,\,s=r,
\end{equation}
the last clause being obvious since $\,\gamma,\hat\gamma\in\bg\hs$ and the 
action of $\,\bg\hs$ is free. With $\,u\,$ and $\,\gamma\,$ fixed as well, for 
each given $\,\hat\gamma\in\stb_{\nnh su}'$ the set of all 
$\,x\in\mathcal{E}'\nnh+ru\,$ having the property (\ref{gxp}) is closed in 
$\,\mathcal{E}'\nnh+ru\,$ while, as we just saw, the union of these sets over 
all $\,\hat\gamma\in\stb_{\nnh su}'$ equals $\,\mathcal{E}'\nnh+ru$. Thus, 
by Baire's theorem (Lemma~\ref{baire}), some 
$\,\hat\gamma\in\stb_{\nnh su}'$ 
satisfies (\ref{gxp}) with all $\,x\,$ from some nonempty open subset of 
$\,\mathcal{E}'\nnh+ru$, and hence -- by real-an\-a\-lyt\-ic\-i\-ty -- for 
all $\,x\in\mathcal{E}'\nnh+ru$. In terms of the translation $\,\tau\nnh_v\w$ 
by the vector $\,v$, we consequently have 
$\,\hat\gamma=\tau\nnh_v\w\circ\gamma\circ\tau\nnh_v^{-\nh1}$ on 
$\,\mathcal{E}'\nnh+su$, so that $\,\gamma\,$ uniquely determines 
$\,\hat\gamma\,$ (due to the injectivity claim of Remark~\ref{stblz}), the 
assignment $\,\gamma\mapsto\hat\gamma\,$ is a homo\-mor\-phism 
$\,\stb_{\hskip-1ptr\hn u}'\to\stb_{\nnh su}'\nh\subseteq\hh\bg\nh$, and 
$\,\zeta
=\hat\gamma\circ\tau\nnh_v\w\circ\gamma^{-\nh1}\nh\circ\tau\nnh_v^{-\nh1}$ 
equals the identity on $\,\mathcal{E}'\nnh+su$. If we now allow $\,s\,$ to 
vary from $\,r$ to $\,0$, the resulting curve $\,s\mapsto\zeta$ consists, 
due to Remark~\ref{afext}, of af\-fine extensions of linear 
self-isom\-e\-tries of the orthogonal complement of $\,\tvs\hh'\nnh$, and 
$\,\hat\gamma=\zeta\circ\tau\nnh_v\w\circ\gamma\circ\tau\nnh_v^{-\nh1}$ on 
$\,\mathcal{E}\nh$. As $\,\bg\hs$ is discrete, the curve 
$\,s\mapsto\hat\gamma\in\bg\nh$, with $\,v=(s-r)u$, must be constant, and can 
be evaluated by setting $\,s=r\,$ (or, $\,v=0$). Thus, 
$\,\hat\gamma=\gamma\,$ on $\,\mathcal{E}\hs$ from the last clause of 
(\ref{gxp}), and so 
$\,\stb_{\hskip-1ptr\hn u}'\nh\subseteq\hh\stb_{\nnh su}'$. For $\,s>0$, 
switching $\,r$ with $\,s\,$ we get the opposite inclusion, and (a) follows. 
Also, taking the linear part of the resulting relation 
$\,\gamma=\zeta\circ\tau\nnh_v\w\circ\gamma\circ\tau\nnh_v^{-\nh1}$, we see 
that $\,\zeta\,$ equals the identity, for all $\,s$. Hence 
$\,\gamma=\tau\nnh_v\w\circ\gamma\circ\tau\nnh_v^{-\nh1}$ commutes with 
$\,\tau\nnh_v\w$, which amounts to (b). Setting 
$\,s=0$, we obtain the first part of (c): 
$\,\stb_{\hskip-1ptr\hn u}'\nh\subseteq\hh\stb_{\nh0}'$. Assertion (d) is 
clear as $\,\mathrm{pr}$, being locally isometric, maps line segments onto 
geodesic segments. Lemma~\ref{fltmf}(a) for 
$\,D=F\hskip-3.8pt_{\mathcal{M}}\w$ yields (e). Since 
$\,\mathrm{pr}:\mathcal{E}\to\mathcal{M}=\mathcal{E}/\hn\bg\nh$, the 
additional property of $\,\rd$ (see Remark~\ref{delta}) implies (f). Finally, 
for $\,k=k(u)$, the geodesic $\,[\hs0,r]\ni t\mapsto\mathrm{pr}(x+tu)$, normal 
to $\,\mathcal{M}\hn'$ at $\,y=\mathrm{pr}(x)$, is one of $\,k\,$ such 
geodesics $\,[\hs0,r]\ni t\mapsto\mathrm{pr}(x+tw)$, joining $\,y\,$ to points 
of its pre\-im\-age under the 
projection $\,\mathcal{M}_{\hn ru}'\nh\to\mathcal{M}\hn'$ of 
Lemma~\ref{isolv}, where $\,w$ ranges over a $\,k$-el\-e\-ment set 
$\,\mathcal{R}\,$ of unit vectors in $\,\tvs\nh$, orthogonal to 
$\,\tvs\hh'\nnh$. The union of the corresponding subset 
$\,C=\{\mathcal{E}'\nnh+rw:w\in\mathcal{R}\}$ of the leaf space of 
$\,F\hskip-3pt_{\mathcal{E}}\w$ equals the pre\-im\-age in (f) -- and hence an 
orbit for the action of $\,\stb_{\nh0}'$ -- as every leaf in the 
pre\-im\-age contains a point nearest $\,x$. Due to the 
al\-read\-y-es\-tab\-lished inclusion 
$\,\stb_{\hskip-1ptr\hn u}'\nh\subseteq\hh\stb_{\nh0}'$ and (\ref{svp}), 
$\,\stb_{\hskip-1ptr\hn u}'$ is the isotropy group of 
$\,\mathcal{E}'\nnh+ru\,$ relative to the transitive action of 
$\,\stb_{\nh0}'$ on $\,C$, and so $\,k$, the cardinality of $\,C$, equals 
the index of $\,\stb_{\hskip-1ptr\hn u}'$ in $\,\stb_{\nh0}'$, which 
proves the second part of (c).
\end{proof}

\section{The generic iso\-tropy group}\label{gg}
Given a Bie\-ber\-bach group $\,\bg\hs$ in a Euclidean af\-fine space 
$\,\mathcal{E}\hs$ with the translation vector space $\,\tvs\nnh$, let us fix 
a vector subspace $\,\tvs\hh'$ of $\,\tvs\hs$ satisfying (\ref{lrd}). As long 
as $\,\dim\,\mathcal{E}\ge2$, such $\,\tvs\hh'$ always exists 
(Section~\ref{lr}). An element $\,\mathcal{E}'$ of 
$\,\mathcal{E}\nnh/\hn\tvs\hh'\nnh$, that is, a coset of $\,\tvs\hh'$ in 
$\,\mathcal{E}\nh$, will be called {\smallit generic\/} if its stabilizer 
(iso\-tropy) sub\-group $\,\stb\hh'\subseteq\bg\nh$, defined by (\ref{stb}), 
equals
\begin{equation}\label{krh}
\mathrm{the\ kernel\ of\ the\ homo\-mor\-phism\ 
}\,\bg\to\mathrm{Iso}\,[\mathcal{E}\nnh/\hn\tvs\hh']\mathrm{\ corresponding\ 
to\ (\ref{act}).}
\end{equation}
The $\,\mathrm{pr}$-im\-ages of generic cosets 
of $\,\tvs\hh'$ will be called generic leaves 
of $\,F\hskip-3.8pt_{\mathcal{M}}\w$.

Still using the symbols $\,\mathrm{pr},L\,$ and $\,H\,$ for the 
u\-ni\-ver\-sal-cov\-er\-ing projection 
$\,\mathcal{E}\to\mathcal{M}=\mathcal{E}/\hn\bg$ and the groups appearing in 
(\ref{lah}) -- (\ref{exa}), let us also
\begin{equation}\label{kpr}
\begin{array}{l}
\mathrm{denote\ by\ }\,\,K'\subseteq H\,\,\mathrm{\ the\ normal\ sub\-group\ 
consisting\ of\ all\ elements}\\
\mathrm{of\ }\,\,H\hs\,\mathrm{\ that\ act\ on\ the\ orthogonal\ complement\ 
of\ }\,\tvs\hh'\hn\mathrm{\ as\ the\ identity\nh,}\\
\mathrm{and\ by\ }\hs\mathcal{U}'\nnh\mathrm{\ the\ subset\ of\ 
}\hs\mathcal{E}\nnh/\hn\tvs\hh'\nnh\mathrm{\ formed\ by\ all\ generic\ cosets\ 
of\ }\hs\tvs\hh'\nnh\mathrm{\ in\ }\hs\mathcal{E}.
\end{array} 
\end{equation}
\begin{theorem}\label{gnric}{\smallit 
For\/ $\,\stb\hh'$ equal to\/ {\rm(\ref{krh})}, our assumptions yield the 
following conclusions.
\begin{enumerate}
  \def\theenumi{{\rm\roman{enumi}}}
\item $\mathcal{U}'$ constitutes an open dense subset of\/ 
$\,\mathcal{E}\nnh/\hn\tvs\hh'\nnh$.
\item The normal sub\-group\/ $\,\stb\hh'$ of\/ $\hs\bg\hs$ is contained as a 
fi\-nite-in\-dex sub\-group in the iso\-tropy group of every\/ 
$\,\mathcal{E}'\nnh\in\mathcal{E}\nnh/\hn\tvs\hh'$ for the action\/ 
{\rm(\ref{act})}, and equal to this iso\-tropy group if\/ 
$\,\mathcal{E}'\nnh\in\mathcal{U}'\nnh$.
\item The\/ $\,\mathrm{pr}$-im\-ages\/ 
$\,\mathcal{M}\hn'\nnh,\mathcal{M}\hn''$ of any\/ 
$\,\mathcal{E}'\nnh,\mathcal{E}''\nnh\in\mathcal{U}'$ are isometric to each 
other.
\item If one identifies\/ $\,\mathcal{E}\hs$ with its translation vector space 
$\,\tvs$ via a choice of an origin, $\,\stb\hh'\nh$ becomes the set of all 
elements of\/ $\,\bg\hs$ having, for\/ $\,K\hn'$ given by\/ {\rm(\ref{kpr})}, 
the form
\begin{equation}\label{axb}
\tvs\nh\ni\hn x\hn\mapsto\nh Ax\nh+\hn 
b\in\tvs\nnh\mathrm{,\ in\ which\ }\,b\nh\in\nnh\tvs\hh'\nnh\mathrm{\ and\ 
the\ linear\ part\ }\hs A\,\mathrm{\ lies\ in\ }\hs K\hn'.
\end{equation}
\item Whenever\/ $\,\mathcal{E}'\nnh\in\mathcal{U}'\nnh$, the homo\-mor\-phism 
which restricts elements of the generic iso\-tropy group\/ $\,\stb\hh'\nh$ 
to\/ $\,\mathcal{E}'$ is injective, and the resulting iso\-mor\-phic image\/ 
$\,\bg\hn'$ of\/ $\,\stb\hh'\nh$ constitutes a Bie\-ber\-bach group in the 
Euclidean af\-fine space\/ $\,\mathcal{E}'\nnh$. The lattice sub\-group of\/ 
$\,\bg\hn'$ and its holonomy group\/ $\,H\nh'$ are the intersection\/ 
$\,L\nh'\nh=L\cap\tvs\hh'$ and the image\/ $\,H\nh'$ of the group\/ $\,K\hn'$ 
defined in\/ {\rm(\ref{kpr})} under the injective homo\-mor\-phism of 
restriction to\/ $\,\tvs\hh'\nnh$.
\end{enumerate}
}
\end{theorem}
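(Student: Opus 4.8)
The plan is to establish the five assertions in the order (iv), (ii), (i), (v), (iii): item (iv) pins down the group $\,\stb\hh'$, and each later item builds on the earlier ones. For (iv), I would identify $\,\stb\hh'$ directly from Remark~\ref{nrsbg}. After choosing an origin, an element $\,\gamma\hs$ of $\,\mathrm{Af{}f}\,\hs\mathcal{E}\hs$ lies in the kernel~(\ref{krh}) of~(\ref{act}) -- i.e.\ preserves every coset of $\,\tvs\hh'\hs$ -- exactly when its translational part lies in $\,\tvs\hh'\hs$ and its linear part $\,A\hs$ satisfies $\,(A-1)(\tvs)\subseteq\tvs\hh'$. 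Since the linear part of any $\,\gamma\in\bg\hs$ lies in $\,H\subseteq\mathrm{O}(\tvs)\hs$ and so preserves $\,\tvs\hh'\hs$ together with its orthogonal complement, the last condition is equivalent to $\,A\hs$ restricting to the identity on that complement, that is, to $\,A\in K\hn'$; this gives~(\ref{axb}). Being a kernel, $\,\stb\hh'\hs$ is automatically normal in $\,\bg\nh$.

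Assertion (ii) reduces to a finite-index statement, since the kernel sits inside every isotropy group automatically and ``equality on $\,\mathcal{U}'$'' is the definition of $\,\mathcal{U}'$. I would prove finiteness of the index by showing that the faithful image $\,\bg/\stb\hh'\hs$ of $\,\bg\hs$ in $\,\mathrm{Iso}\,[\mathcal{E}\nnh/\hn\tvs\hh']\hs$ acts properly discontinuously on the Euclidean space $\,\mathcal{E}\nnh/\hn\tvs\hh'$: the image of $\,L\hs$ is the translation sub\-group $\,L/(L\cap\tvs\hh')$, a full lattice in $\,\tvs\nnh/\hn\tvs\hh'\hs$ by Remark~\ref{quotl}, and it has finite index in $\,\bg/\stb\hh'\hs$ because $\,[\bg:\stb\hh'L]\le[\bg:L]=|H|<\infty$; a finite-index over\-group of a lattice still acts properly discontinuously, so all point stabilizers there are finite, which is (ii). For (i), with this properly discontinuous isometric action of $\,\bg/\stb\hh'\hs$ on the connected complete space $\,\mathcal{E}\nnh/\hn\tvs\hh'$, the complement of $\,\mathcal{U}'\hs$ is the union over the countably many nontrivial cosets $\,\bar\gamma\stb\hh'\hs$ of the fixed-point sets $\,\mathrm{Fix}(\bar\gamma)\hs$ (well defined on cosets because $\,\stb\hh'\hs$ is the kernel), each a proper af\-fine sub\-space, hence closed with empty interior. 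Baire's theorem (Lemma~\ref{baire}) then makes $\,\mathcal{U}'\hs$ dense, while proper discontinuity makes this union locally finite, so $\,\mathcal{U}'\hs$ is open.

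Assertion (v) is a specialization of Theorem~\ref{restr}(ii): for $\,\mathcal{E}'\nnh\in\mathcal{U}'$ the stabilizer of $\,\mathcal{E}'\hs$ equals $\,\stb\hh'$, so Remark~\ref{stblz} gives injectivity of restriction to $\,\mathcal{E}'$, the image $\,\bg\hn'\hs$ is a Bie\-ber\-bach group in $\,\mathcal{E}'\nnh$, and part (ii-c) there gives $\,L\cap\tvs\hh'\subseteq L\nh'$. Genericity upgrades this inclusion to equality: by~(\ref{axb}), an element of $\,\stb\hh'\hs$ whose restriction to $\,\mathcal{E}'\hs$ is a translation has linear part $\,A\in K\hn'\hs$ with $\,A|_{\tvs\hh'}=1$, whence $\,A=1$, so the element is a pure translation of $\,\mathcal{E}\hs$ lying in $\,L\cap\tvs\hh'$; thus $\,L\nh'=L\cap\tvs\hh'$. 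The holonomy group $\,H\nh'\hs$ is, by Theorem~\ref{restr}(ii-b) and~(\ref{axb}), obtained by restricting to $\,\tvs\hh'\hs$ the linear parts of elements of $\,\stb\hh'$, as asserted in the statement.

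Assertion (iii) is the part I expect to cause the real trouble. The local ingredient is that for $\,\mathcal{E}'\nnh\in\mathcal{U}'\hs$ one has $\,k(u)=1\hs$ for every unit $\,u\perp\tvs\hh'\hs$ in Lemma~\ref{isolv}: there $\,\stb_{\hskip-1ptr\hn u}'\subseteq\stb_{\nh0}'=\stb\hh'$, while $\,\stb\hh'$, being the kernel, is contained in $\,\stb_{\hskip-1ptr\hn u}'$, so the two coincide and the index is $\,1$. Hence the covering $\,\mathcal{M}_{\hn ru}'\nh\to\mathcal{M}\hn'\hs$ of Lemma~\ref{isolv} is an isometry, so the isometry type of $\,\mathrm{pr}(\mathcal{E}'')\hs$ is locally constant as $\,\mathcal{E}''\hs$ varies near $\,\mathcal{E}'\nnh$; it is moreover constant along $\,\bg$-orbits, since a $\,\bg$-translate of a coset has the same $\,\mathrm{pr}$-image and, $\,\stb\hh'\hs$ being normal, conjugation carries the Bie\-ber\-bach group of one generic leaf onto that of its translate. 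So the isometry type descends to a locally constant function on the image of $\,\mathcal{U}'\hs$ in the leaf-space orbifold $\,(\mathcal{E}\nnh/\hn\tvs\hh')/\bg$, i.e.\ on the \emph{regular} (manifold) locus of that orbifold, and one concludes since the regular locus of a connected orbifold is connected. This last connectedness is the genuine obstacle: $\,\mathcal{U}'\hs$ itself may be disconnected, because codimension-one ``mirror'' strata can occur (coming from order-two linear parts), and the remedy is that two chambers of the local reflection arrangement lying on either side of such a stratum are related by an element of the local isotropy group -- hence of $\,\bg\hs$ -- so the isometry type does not jump across the stratum, while any two chambers are joined by a gallery of adjacent ones. (Alternatively, one may feed the radial constancy of the isometry type into Lemma~\ref{opdns}.)
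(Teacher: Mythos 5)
Your items (iv), (ii), (i) are correct, and for (i)--(ii) you take a genuinely different (and arguably cleaner) route than the paper. The paper obtains openness/density of $\,\mathcal{U}'$ and the finite-index claim from Lemma~\ref{opdns} together with Lemma~\ref{cmmut}(a),(c), i.e.\ from the normal-exponential analysis of Section~\ref{gi}; you instead observe that $\,\bg/\hn\stb\hh'$ contains the full translation lattice $\,L/(L\cap\tvs\hh')\,$ of $\,\mathcal{E}\nnh/\hn\tvs\hh'$ with finite index, hence acts properly discontinuously, so point stabilizers are finite (this is (ii)), while the complement of $\,\mathcal{U}'$ is a locally finite, countable union of proper affine fixed-point sets, hence closed with empty interior by Lemma~\ref{baire} (this is (i)). That argument is sound, avoids the real-analyticity/Baire work of Lemma~\ref{cmmut}, and gives Proposition~\ref{cryst} (discreteness of $\,\bg/\hn\stb\hh'$) as a byproduct. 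Your explicit upgrade of $\,L\cap\tvs\hh'\subseteq L\nh'$ to equality in (v), using (\ref{axb}), is also correct and is exactly what the paper leaves implicit.

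Two soft spots remain. First, (iii): the paper deduces it directly from (iv)--(v): writing $\,\mathcal{E}''\nh=\mathcal{E}'\nnh+v\,$ with $\,v\,$ orthogonal to $\,\tvs\hh'\nnh$, every $\,\gamma\in\stb\hh'$ commutes with the translation by $\,v\,$ (its linear part lies in $\,K\hn'$, so it fixes $\,v$), hence that translation conjugates $\,\stb\hh'|_{\mathcal{E}'}$ onto $\,\stb\hh'|_{\mathcal{E}''}$, and Remark~\ref{bijct} yields the isometry. Your route through local constancy of the isometry type and connectedness of the regular locus of the leaf-space orbifold can probably be completed, but the mirror/gallery step is precisely what you leave sketchy, and it is unnecessary; the fallback via Lemma~\ref{opdns} is weaker still, since it only gives constancy on some open dense set, not on all of $\,\mathcal{U}'\nnh$. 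Second, (v): what your argument (like the paper's one-line proof) actually establishes is that $\,H\nh'$ is the restriction to $\,\tvs\hh'$ of the group of \emph{linear parts of elements of} $\,\stb\hh'\nnh$; the statement asserts it is the restriction of all of $\,K\hn'\nnh$, and your phrase ``as asserted in the statement'' silently identifies the two. The missing step is surjectivity of the linear-part map $\,\stb\hh'\nh\to K\hn'\nnh$, and this is a genuine issue rather than a routine verification: in the generalized Klein bottle of Section~\ref{gk}, with $\,\tvs\hh'$ taken to be the hyperplane $\,\tvs\hh''$ of Lemma~\ref{biebg}, one has $\,K\hn'\nh=H\cong\bbZ_n\,$ while $\,\stb\hh'$ consists of translations and the generic leaves are tori with trivial holonomy. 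So you should prove (and claim) the weaker identification of $\,H\nh'$ with the restricted linear-part group of $\,\stb\hh'$ (a subgroup of the restriction of $\,K\hn'$), and flag the discrepancy with the printed statement rather than assert equality with the image of $\,K\hn'\nnh$.
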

\begin{proof}Lemma~\ref{cmmut}(a) states that the assumptions of 
Lemma~\ref{opdns} are satisfied by the Euclidean af\-fine space 
$\,\mathcal{W}\nnh=\mathcal{E}\nnh/\hn\tvs\hh'$ and the mapping $\,F\,$ that 
sends $\,\mathcal{E}'\nnh\in\mathcal{E}\nnh/\hn\tvs\hh'$ to its iso\-tropy 
group $\,\stb\hh'$ with (\ref{stb}). The assignment 
$\,\mathcal{E}'\nh\mapsto\stb\hh'$ is thus locally constant on some open dense 
set $\,\mathcal{U}'\nh\subseteq\hs\mathcal{E}\nnh/\hn\tvs\hh'\nnh$. Letting 
$\,\stb\hh'$ be the constant value of this assignment assumed on a nonempty 
connected open subset $\,\mathcal{W}'$ of $\,\mathcal{U}'\nnh$, and fixing 
$\,\gamma\in\stb\hh'\nnh$, we obtain $\,\gamma(\mathcal{E}')=\mathcal{E}'$ for 
all $\,\mathcal{E}'\nnh\in\mathcal{W}'\nnh$, and hence, from 
real-an\-a\-lyt\-ic\-i\-ty, for all 
$\,\mathcal{E}'\nnh\in\mathcal{E}\nnh/\hn\tvs\hh'\nnh$. Thus, our 
$\,\stb\hh'$ is contained in the iso\-tropy group of every 
$\,\mathcal{E}'\nnh\in\mathcal{E}\nnh/\hn\tvs\hh'\nnh$. Since the same applies 
also to another constant value $\,\stb\hh''$ assumed on a nonempty connected 
open set, $\,\stb\hh''\nnh=\stb\hh'$ and the phrase `locally constant' may be 
replaced with {\smallit constant}. By Lemma~\ref{cmmut}(c), such $\,\stb\hh'$ 
must be a fi\-nite-in\-dex sub\-group of each iso\-tropy group. As $\,\stb\hh'$ 
consists of the elements of $\,\bg\hs$ preserving every 
$\,\mathcal{E}'\nnh\in\mathcal{U}'\nnh$, real-an\-a\-lyt\-ic\-i\-ty implies 
that they preserve all $\,\mathcal{E}'\nnh\in\mathcal{E}\nnh/\hn\tvs\hh'\nnh$, 
and so $\,\stb\hh'$ coincides with (\ref{krh}), which also shows that 
$\,\stb\hh'$ is a normal sub\-group of $\,\bg\nh$, and (i) -- (ii) follow.

Assertions (iv) -- (v) are in turn immediate from Remark~\ref{nrsbg} and, 
respectively, Theorem~\ref{restr}(ii) combined with Remark~\ref{stblz}, while 
(v) implies (iii) via Remark~\ref{bijct}.
\end{proof}
\begin{remark}\label{nninj}An element of $\,\bg\hs$ acting trivially on 
$\,\mathcal{E}\nnh/\hn\tvs\hh'$ need not lie in $\,L\nh'\nnh$. An example 
arises when the compact flat manifold $\,\mathcal{M}=\mathcal{E}/\hn\bg\hs$ is 
a Riemannian product 
$\,\mathcal{M}\nh=\nnh\mathcal{M}\hn'\nh\times\hn\mathcal{M}\hn''$ 
with $\,\mathcal{E}\nh=\mathcal{E}'\nnh\times\mathcal{E}''$ and 
$\,\bg=\bg\hn'\nnh\times\bg\hn''$ for two Bie\-ber\-bach groups 
$\,\bg\hn'\nh,\bg\hn''$ in Euclidean af\-fine spaces 
$\,\mathcal{E}'\nh,\mathcal{E}''$ having the translation vector spaces 
$\,\tvs\hh'\nnh,\tvs\hh''\nnh$, while $\,\mathcal{M}\hn'$ is not a torus. The 
$\,H\nh$-in\-var\-i\-ant sub\-space $\,\tvs\hh'\nh\times\nh\{0\}\,$ then gives 
rise to the $\,\mathcal{M}\hn'$ factor foliation 
$\,F\hskip-3pt_{\mathcal{E}}\w$ of the product manifold $\,\mathcal{M}$, and 
the action of the group $\,\bg\hn'\nh\times\nh\{1\}\,$ on its leaf space is 
obviously trivial, even though $\,\bg\hn'\nh\times\nh\{1\}\,$ contains some 
elements that are not translations.
\end{remark}
\begin{remark}\label{subgp}In Theorem~\ref{gnric}, if 
$\,\mathcal{E}'\nnh\in\mathcal{U}'\nnh$, we may treat $\,\bg\hn'$ (or, 
$\,H\nh'$) as a sub\-group of $\,\bg\hs$ (or, respectively, $\,H$), by 
identifying $\,\stb\hh'$ with $\,\bg\hn'$ (or, $\,H\nh'$ with $\,K\hn'$) via 
the iso\-mor\-phism $\,\stb\hh'\nh\to\bg\hn'$ and $\,K\hn'\nh\to H\nh'$ 
resulting from Theorem~\ref{gnric}(v). Note that these sub\-groups 
$\,\bg\hn'\nh\subseteq\bg\hs$ and $\,H\nh'\nh\subseteq H\,$ do not 
depend on the choice of $\,\mathcal{E}'\nnh\in\mathcal{U}'\nnh$, and neither 
does the mapping degree $\,d=|H\nh'|\,$ of the $\,d\hh$-fold covering 
projection  $\,\mathcal{T}\hh'\nnh\to\nh\mathcal{M}\hn'\nh
=\mathcal{T}\hh'\nnh/\nh H\nh'\nnh$, cf.\ (\ref{cmp}) and the line following 
it.
\end{remark}
\begin{remark}\label{spcas}Any lattice $\,L\,$ in the translation vector space 
$\,\tvs\hs$ of a Euclidean af\-fine space $\,\mathcal{E}\hs$ is, obviously, 
a Bie\-ber\-bach group in $\,\mathcal{E}\nh$. In the case of a fixed vector 
subspace $\,\tvs\hh'$ of $\,\tvs\hs$ with (\ref{lrd}), all general facts 
established about any given Bie\-ber\-bach group $\,\bg\hs$ in 
$\,\mathcal{E}\nh$, the compact flat manifold 
$\,\mathcal{M}=\mathcal{E}/\hn\bg\nh$, and the leaves $\,\mathcal{M}\hn'$ of 
$\,F\hskip-3.8pt_{\mathcal{M}}\w\hs$ (see Theorem~\ref{restr}) thus remain 
valid for the torus $\,\mathcal{T}=\,\mathcal{E}/\nh L\,$ and the leaves 
$\,\mathcal{T}\hh'$ of $\,F\hskip-3pt_{\mathcal{T}}\w$. Every coset of 
$\,\tvs\hh'$ becomes generic if we declare the lattice $\,L\,$ of $\,\bg\hs$ 
to be the new Bie\-ber\-bach group.
\end{remark}

\section{The leaf space}\label{ls}
By a {\smallit crystallographic group\/} \cite{szczepanski} in a Euclidean 
af\-fine space one means a discrete group of isometries having a compact 
fundamental domain, cf.\ Remark~\ref{cptfd}.
\begin{proposition}\label{cryst}{\smallit 
Under the assumptions listed at the beginning of Section\/~{\rm\ref{gg}}, 
with\/ $\,\stb\hh'$ denoting the normal sub\-group\/ {\rm(\ref{krh})} of\/ 
$\,\bg\nh$, the quotient group\/ $\,\bg/\hn\stb\hh'$ acts effectively by 
isometries on the quotient Euclidean af\-fine space\/ 
$\,\mathcal{E}\nnh/\hn\tvs\hh'$ and, when identified a sub\-group of\/ 
$\,\mathrm{Iso}\,[\mathcal{E}\nnh/\hn\tvs\hh']$, it constitutes a 
crystallographic group.
}
\end{proposition}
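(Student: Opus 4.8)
The plan is to verify directly the two defining conditions of a crystallographic group recalled just above Proposition~\ref{cryst} — discreteness of $\,\bg/\hn\stb\hh'\hs$ as a sub\-group of $\,\mathrm{Iso}\,[\mathcal{E}\nnh/\hn\tvs\hh']\hs$ and the existence of a compact subset of $\,\mathcal{E}\nnh/\hn\tvs\hh'\hs$ meeting every orbit (a ``fundamental domain'' in the sense of Remark~\ref{cptfd}) — since effectiveness and the isometry property are immediate. Indeed, $\,\stb\hh'\hs$ is by its very definition (\ref{krh}) the kernel of the homo\-mor\-phism $\,\bg\to\mathrm{Iso}\,[\mathcal{E}\nnh/\hn\tvs\hh']$, so $\,\bg/\hn\stb\hh'\hs$ is literally an effectively acting sub\-group of $\,\mathrm{Iso}\,[\mathcal{E}\nnh/\hn\tvs\hh']$, and the action (\ref{act}) is isometric by construction (Section~\ref{gi}). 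The pivotal object will be the image $\,\Lambda\,$ of the lattice sub\-group $\,L\,$ under $\,\bg\to\bg/\hn\stb\hh'$. A translation by a vector $\,v\in\tvs\hs$ preserves every coset of $\,\tvs\hh'\hs$ precisely when $\,v\in\tvs\hh'\nnh$, so $\,L\cap\stb\hh'\nh=L\cap\tvs\hh'\nh=L\nh'\nnh$, whence $\,\Lambda\cong L/\nh L\nh'\nnh$; by Remark~\ref{quotl} this is a full lattice in the translation vector space $\,\tvs\nnh/\hn\tvs\hh'\hs$ of $\,\mathcal{E}\nnh/\hn\tvs\hh'\nnh$, acting on $\,\mathcal{E}\nnh/\hn\tvs\hh'\hs$ purely by translations, and hence is a discrete — therefore also closed — sub\-group of $\,\mathrm{Iso}\,[\mathcal{E}\nnh/\hn\tvs\hh']$.

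A compact fundamental domain for $\,\bg/\hn\stb\hh'\hs$ is obtained by pushing one forward from $\,\mathcal{E}\nnh$. Being a Bie\-ber\-bach group, $\,\bg\hs$ admits a compact fundamental domain $\,D\subseteq\mathcal{E}$; with $\,\pi:\mathcal{E}\to\mathcal{E}\nnh/\hn\tvs\hh'\hs$ denoting the projection, the set $\,\pi(D)\,$ is compact, and given any coset $\,\mathcal{E}'$ one may choose $\,x\in\pi^{-1}(\mathcal{E}')\,$ and $\,\gamma\in\bg\hs$ with $\,\gamma(x)\in D$, so that the image of $\,\gamma\,$ in $\,\bg/\hn\stb\hh'\hs$ carries $\,\mathcal{E}'$ into $\,\pi(D)$; thus $\,\pi(D)\,$ meets every orbit. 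The remaining ingredient for the main point is that $\,\Lambda\,$ has finite index in $\,\bg/\hn\stb\hh'$: indeed $\,[\bg/\hn\stb\hh':\Lambda]=[\bg:L\stb\hh']$, a divisor of $\,[\bg:L]=|H|$, which is finite by Remark~\ref{frpdc}.

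The only real content is discreteness of $\,\bg/\hn\stb\hh'\hs$ in the Lie group $\,G:=\mathrm{Iso}\,[\mathcal{E}\nnh/\hn\tvs\hh']$, and this is where I expect whatever (mild) difficulty there is to reside. It follows from the two facts just recorded — $\,\Lambda\,$ is discrete and closed in $\,G$, and of finite index in $\,\bg/\hn\stb\hh'\hs$ — via the general principle that a sub\-group of a Lie group containing a discrete sub\-group of finite index is itself discrete, which one proves directly as follows. Fix left-coset representatives $\,\mathrm{id}=c_1,\dots,c_m$ of $\,\Lambda\,$ in $\,\bg/\hn\stb\hh'$; if $\,\bar\gamma_i\to\mathrm{id}\,$ in $\,\bg/\hn\stb\hh'\subseteq G$, then, after passing to a sub\-sequence, $\,\bar\gamma_i=c_j\lambda_i\,$ for a fixed index $\,j\,$ and elements $\,\lambda_i\in\Lambda$, so $\,\lambda_i\to c_j^{-1}$, forcing $\,c_j^{-1}\in\Lambda\,$ by closedness; hence $\,j=1$, the sub\-sequence lies in $\,\Lambda$, and discreteness of $\,\Lambda\,$ gives $\,\bar\gamma_i=\mathrm{id}\,$ for $\,i\,$ large, so $\,\mathrm{id}\,$ is isolated in $\,\bg/\hn\stb\hh'$, i.e.\ $\,\bg/\hn\stb\hh'\hs$ is discrete. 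Combining discreteness with the compact fundamental domain $\,\pi(D)\,$ shows $\,\bg/\hn\stb\hh'\hs$ is crystallographic. No deeper obstacle is foreseen, since all the structural input — the exact sequence (\ref{exa}), the lattice $\,L/\nh L\nh'\hs$ inside $\,\tvs\nnh/\hn\tvs\hh'\hs$ supplied by Remark~\ref{quotl}, and finiteness of $\,H\,$ — has already been assembled.
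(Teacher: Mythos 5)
Your proof is correct, and for the crucial step -- discreteness of $\,\bg/\hn\stb\hh'$ in $\,\mathrm{Iso}\,[\mathcal{E}\nnh/\hn\tvs\hh']\,$ -- it takes a genuinely different route from the paper's. The paper argues by contradiction ``upstairs'': it lifts a hypothetical convergent sequence of mutually distinct classes back to elements $\,\gamma_k\in\bg\nh$, corrects each by a suitable vector of $\,L\nh'\nh=L\cap\tvs\hh'$ (which does not change the class modulo $\,\stb\hh'$) so that the values at a fixed point stay bounded, and then extracts a convergent subsequence in $\,\mathrm{Iso}\,\hs\mathcal{E}\nh$, contradicting discreteness of $\,\bg\nh$; discreteness is thus transferred from $\,\bg\hs$ acting on $\,\mathcal{E}\nh$. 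You work entirely ``downstairs'': you identify the image $\,\Lambda\cong L/\nh L\nh'$ of $\,L\,$ as a full lattice of translations of $\,\mathcal{E}\nnh/\hn\tvs\hh'$ (Remark~\ref{quotl}), hence closed and discrete in $\,\mathrm{Iso}\,[\mathcal{E}\nnh/\hn\tvs\hh']$, note that its index in $\,\bg/\hn\stb\hh'$ equals $\,[\bg:L\hs\stb\hh']\le|H|<\infty$, and then apply the elementary fact that a subgroup of a topological group containing a closed discrete subgroup of finite index is itself discrete. For the compact fundamental domain the paper just takes the parallelepiped of the lattice $\,L/\nh L\nh'$ (Remark~\ref{cptfd}), while you push forward a compact fundamental domain of $\,\bg\hs$ under $\,\mathcal{E}\to\mathcal{E}\nnh/\hn\tvs\hh'$; both are valid. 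What your version buys is structural clarity: it exhibits $\,\bg/\hn\stb\hh'$ explicitly as a finite extension of the lattice $\,L/\nh L\nh'\nnh$, in the spirit of the standard characterization of crystallographic groups, at the cost of the extra (easy) finite-index lemma; the paper's argument is shorter because it simply reuses discreteness of $\,\bg\nh$. One small phrasing point: in your lemma the conclusion ``$\,\bar\gamma_i=\mathrm{id}\,$ for $\,i\,$ large'' refers only to the chosen subsequence, so the argument is best cast as ruling out a sequence of non-identity elements converging to the identity -- exactly what isolation of the identity requires -- but this is a cosmetic fix, not a gap.
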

\begin{proof}A compact fundamental domain exists by Remark~\ref{cptfd} since, 
according to Remark~\ref{quotl}, $\,\bg/\hn\stb\hh'$ contains the lattice 
sub\-group $\,L/\nh L\nh'$ of $\,\mathcal{E}\nnh/\hn\tvs\hh'\nnh$. To verify 
discreteness of $\,\bg/\hn\stb\hh'\nnh$, suppose that, on the contrary, some 
sequence $\,\gamma_k\w\in\bg\hs$, $\,k=1,2\,\dots\hs$, has terms representing 
mutually distinct elements of $\,\bg/\hn\stb\hh'$ which converge in 
$\,\mathrm{Iso}\,[\mathcal{E}\nnh/\hn\tvs\hh']$. As $\,L\nh'$ is a 
lattice in $\,\tvs\hh'\nnh$, fixing $\,x\in\mathcal{E}\hs$ and suitably 
choosing $\,v_k\w\in L\nh'$ we achieve boundedness of the sequence 
$\,\hat\gamma_k\w(x)=\gamma_k\w(x)+v_k\w$, while $\,\hat\gamma_k\w$ represent 
the same (distinct) elements of $\,\bg/\hn\stb\hh'$ as $\,\gamma_k\w$. The 
ensuing  convergence of a sub\-se\-quence of $\,\hat\gamma_k\w$ contradicts 
discreteness of $\,\bg\nh$.
\end{proof}
The resulting quotient of $\,\mathcal{E}\nnh/\hn\tvs\hh'$ under the action of 
$\,\bg/\hn\stb\hh'$ is thus a flat compact or\-bi\-fold \cite{davis}, which 
may clearly be identified both with the leaf space 
$\,\mathcal{M}/\nh F\hskip-3.8pt_{\mathcal{M}}\w$, and with the quotient of 
the torus $\,[\mathcal{E}\nnh/\hn\tvs\hh']/[L\cap\tvs\hh']\,$ under the action 
of $\,H$, mentioned in (\ref{lsp}). The latter identification clearly implies 
the Haus\-dorff property the leaf space $\,\mathcal{M}/\nh F\hskip-3.8pt_{\mathcal{M}}\w$.

On the other hand, for an $\,H\nh$-in\-var\-i\-ant sub\-space $\,\tvs\hh''$ 
of $\,\tvs\hs$ {\smallit not\/} assumed to be an $\,L$-sub\-space, there 
exists an $\,L${\smallit-clo\-sure\/} of $\,\tvs\hh''\nnh$, meaning the 
smallest $\,L$-sub\-space $\,\tvs\hh'$ of $\,\tvs\hs$ containing 
$\,\tvs\hh''\nnh$, which is obviously obtained by intersecting all such 
$\,L$-sub\-spaces (Lemma~\ref{spint}). The leaf space 
$\,\mathcal{M}/\nh F\hskip-3.8pt_{\mathcal{M}}\w$ corresponding to 
$\,\tvs\hh'$ then forms a natural ``Haus\-dorff\-iz\-ation'' of the leaf space 
of $\,\tvs\hh''\nnh$, and may also be described in terms of 
Haus\-dorff-Gro\-mov limits. See the forthcoming paper 
\cite{bettiol-derdzinski-mossa-piccione}.

\section{Intersections of generic complementary leaves}\label{ig}
For a ho\-mol\-o\-gy interpretation of parts (a) and (c) in 
Theorem~\ref{intrs}, see Remark~\ref{inthm}.

Throughout this section $\,\bg\hs$ is a given Bie\-ber\-bach group in a 
Euclidean af\-fine space $\,\mathcal{E}\hs$ of dimension $\,n\ge2$, while 
$\,\tvs\hh'\nnh,\tvs\hh''$ are two mutually complementary 
$\,H\nnh$-in\-var\-i\-ant $\,L$-sub\-spaces of the translation vector space 
$\,\tvs\hs$ of $\,\mathcal{E}\nh$, in the sense of (\ref{cpl}) and 
Definition~\ref{lsbsp}, for $\,L\,$ and $\,H\,$ associated with $\,\bg\hs$ via 
(\ref{lah}). We also fix generic cosets $\,\mathcal{E}'$ of $\,\tvs\hh'$ 
and $\,\mathcal{E}''$ of $\,\tvs\hh''$ (see the beginning of 
Section~\ref{gg}), which leads to the analogs 
$\,L\nh'\nh\to\hs\bg\hn'\nh\to H\nh'$ and 
$\,L\nh''\nh\to\hs\bg\hn''\nh\to H\nh''$ of (\ref{exa}), described by 
Theorem~\ref{restr}(ii) and, $\,\mathcal{E}'\nnh,\mathcal{E}''$ being generic, 
Theorem~\ref{gnric}(v) yields $\,L\nh'\nh=L\cap\tvs\hh'$ and 
$\,L\nh''\nh=L\cap\tvs\hh''\nnh$. Furthermore, for these 
$\,\bg\nh,\bg\hn'\nnh,\bg\hn''\nnh,L,L\nh'\nnh,L\nh''\nnh,
H,H\nh'\nnh,H\nh''\nnh$,
\begin{equation}\label{cnc}
\mathrm{the\ conclusions \ of\ Lemma~\ref{prdsg}\ hold\ if\ we\ replace\ the\ 
letter\ }\,G\,\mathrm{\ with\ }\,\bg\nh,L\,\mathrm{\ or\ }\,H,
\end{equation}
since so do the assumptions of Lemma~\ref{prdsg}, provided that one uses 
Remark~\ref{subgp} to treat $\,\bg\hn'$ and $\,\bg\hn''$ (or, $\,H\nh'$ and 
$\,H\nh''$) as sub\-groups of $\,\bg\hs$ (or, respectively, $\,H$). In fact, 
(\ref{axb}) and the description of $\,K\hn'$ in (\ref{kpr}) show that all 
$\,A\in K\hn'$ (and, among them, the linear parts of all elements of 
$\,\stb\hh'\nh=\bg\hn'$) leave invariant both $\,\tvs\hh'$ and 
$\,\tvs\hh''\nnh$, and act via the identity on the latter. (We have the 
obvious isomorphic identifications of $\,\tvs\nnh/\hn\tvs\hh'$ with 
$\,\tvs\hh''$ on the one hand, and with the orthogonal complement of 
$\,\tvs\hh'$ in $\,\tvs\hs$ on the other, while such $\,A\,$ descend to the 
identity auto\-mor\-phism of $\,\tvs\nnh/\hn\tvs\hh'\nnh$.) The same is 
clearly the case if one switches the primed symbols with the double-prim\-ed 
ones, while elements of $\,\stb\hh'$ now commute with those of $\,\stb\hh''$ 
in view of (\ref{axb}). This yields (\ref{cnc}), so that we may form
\begin{equation}\label{qgr}
\mathrm{the\ quotient\ groups\ }\,\hat\bg\nh
=\bg/(\bg\hn'\nnh\bg\hn'')\hh,\hskip8pt\hat L
=L/(L\nh'\nnh L\nh'')\hh,\hskip8pt\hat H\nh=H/(H\nh'\nnh H\nh'')\hh.
\end{equation}
Finally, let $\,\mathrm{pr}:\mathcal{E}\to\mathcal{M}=\mathcal{E}/\hn\bg\hs$ 
and 
$\,\mathcal{M}\hn'\nnh,\mathcal{M}\hn''\nnh,\mathcal{T}\hh'\nnh,
\mathcal{T}\hh''$ denote, respectively, the covering projection of 
Theorem~\ref{restr}(i), the $\,\mathrm{pr}$-im\-age $\,\mathcal{M}\hn'$ of 
$\,\mathcal{E}'$ (or, $\,\mathcal{M}\hn''$ of $\,\mathcal{E}''$), and the tori 
$\,\mathcal{E}'\nnh/\nh L\nh'$ and $\,\mathcal{E}''\nnh/\nh L\nh''\nnh$, 
contained in the torus $\,\mathcal{T}\nh=\hs\mathcal{E}/\nh L\,$ of 
(\ref{cmp}). Note that $\,\mathcal{M}\hn'$ and $\,\mathcal{M}\hn''$ are 
(compact) leaves of the parallel distributions arising, due to 
Theorem~\ref{restr}(i), on $\,\mathcal{M}=\mathcal{E}/\hn\bg\nh$, which 
itself is a compact flat Riemannian manifold (Remark~\ref{frpdc}).
\begin{theorem}\label{intrs}{\smallit 
Under the above hypotheses, the following conclusions hold.
\begin{enumerate}
  \def\theenumi{{\rm\alph{enumi}}}
\item The intersection\/ $\,\mathcal{M}\hn'\cap\mathcal{M}\hn''\nnh$, or\/ 
$\,\mathcal{T}\hh'\cap\mathcal{T}\hh''\nnh$, is a finite subset of\/ 
$\,\mathcal{M}$, or $\,\mathcal{T}\nnh$, and stands in a bijective 
correspondence with the quotient group\/ $\,\hat \bg\hs$ or, 
respectively, $\,\hat L$, of\/ {\rm(\ref{qgr})},
\item The projection\/ $\,\mathcal{T}\nnh\to\mathcal{M}\,$ of\/ 
{\rm(\ref{cmp})} maps\/ $\,\mathcal{T}\hh'\cap\mathcal{T}\hh''$ 
injectively into\/ $\,\mathcal{M}\hn'\cap\mathcal{M}\hn''\nnh$.
\item The cardinality\/ $\,|\mathcal{M}\hn'\cap\mathcal{M}\hn''\hn|\,$ of\/ 
$\,\mathcal{M}\hn'\cap\mathcal{M}\hn''$ equals\/ 
$\,|\mathcal{T}\hh'\cap\mathcal{T}\hh''\hn|\,$ times\/ 
$\,|\hat H|$.
\item The claim about\/ $\,\mathcal{T}\hh'\cap\mathcal{T}\hh''$ in\/ {\rm(a)} 
remains true whether or not\/ $\,\mathcal{E}'\nnh,\mathcal{E}''$ are generic.
\item The two bijective correspondences in\/ {\rm(a)} may be chosen so that, 
under the resulting identifications, the injective mapping\/ 
$\,\mathrm{pr}:\mathcal{T}\hh'\cap\mathcal{T}\hh''\nh
\to\mathcal{M}\hn'\cap\mathcal{M}\hn''$ of\/ {\rm(b)} coincides with the 
group homo\-mor\-phism\/ 
$\,\hat L\to\hat \bg\hs$ induced by the inclusion\/ $\,L\to\bg\nh$.
\end{enumerate}
}
\end{theorem}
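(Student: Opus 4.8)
The plan is to fix the origin $\,o\in\mathcal{E}\hs$ at the one point $\,\mathcal{E}'\cap\mathcal{E}''$ (see~(\ref{opt})), identifying $\,\mathcal{E}\hs$ with $\,\tvs=\tvs\hh'\oplus\tvs\hh''$ so that $\,\mathcal{E}'=\tvs\hh'$ and $\,\mathcal{E}''=\tvs\hh''$. Throughout, $\,\bg\hn',\bg\hn''$ (and likewise $\,L\nh',L\nh''$ and $\,H\nh',H\nh''$) are viewed as subgroups of $\,\bg\hs$ (of $\,L$, of $\,H$) via Remark~\ref{subgp} and Theorem~\ref{restr}(ii). By~(\ref{cnc}) and Lemma~\ref{prdsg}, each of $\,\bg\hn'\bg\hn'',L\nh'L\nh'',H\nh'H\nh''$ is a \emph{normal} subgroup of $\,\bg,L,H$ respectively, internally isomorphic to the corresponding direct product; moreover $\,L\nh'L\nh''=(L\cap\tvs\hh')\oplus(L\cap\tvs\hh'')$ is a full lattice in $\,\tvs$, since $\,\tvs\hh'$ and $\,\tvs\hh''$ are $\,L$-subspaces (Remark~\ref{lttce}(b)). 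Hence $\,\bg\hn'\bg\hn''\supseteq L\nh'L\nh''$ has finite index in $\,L$, which has finite index $\,|H|$ in $\,\bg\hs$ (Remark~\ref{frpdc}); so the three quotient groups in~(\ref{qgr}) are finite.

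For~(a) I would work with the ``pointed incidence set'' $\,\mathcal{I}=\{(x,\gamma)\in\mathcal{E}'\nnh\times\bg:\gamma(x)\in\mathcal{E}''\}$. Since the linear part $\,A\,$ of every $\,\gamma\in\bg\hs$ preserves $\,\tvs\hh'$ and $\,\tvs\hh''$ (by $\,H$-in\-var\-i\-ance), writing the translational part of $\,\gamma\hs$ as $\,b=b'+b''$ with $\,b'\in\tvs\hh',\,b''\in\tvs\hh''$ shows that $\,x=-A^{-1}b'$ is the unique point of $\,\mathcal{E}'$ carried by $\,\gamma\hs$ into $\,\mathcal{E}''$; hence $\,(x,\gamma)\mapsto\gamma\,$ is a bijection $\,\mathcal{I}\to\bg$. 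Next, $\,(x,\gamma)\mapsto\mathrm{pr}(x)\,$ maps $\,\mathcal{I}\,$ \emph{onto} $\,\mathcal{M}\hn'\cap\mathcal{M}\hn''$ (a point of $\,\mathcal{M}\hs$ lies there precisely when it has a lift in $\,\mathcal{E}'$ and a lift in $\,\mathcal{E}''$), and its fibre over each such point is a single orbit of the \emph{free} action of $\,\bg\hn'\nnh\times\bg\hn''$ on $\,\mathcal{I}$ given by $\,(\alpha,\beta)\cdot(x,\gamma)=(\alpha(x),\beta\gamma\alpha^{-1})$: freeness follows at once from freeness of the $\,\bg\hs$-action (Remark~\ref{frpdc}), and transitivity on a fibre from the fact that $\,\mathrm{pr}:\mathcal{E}'\nnh\to\mathcal{M}\hn'$ and $\,\mathrm{pr}:\mathcal{E}''\nnh\to\mathcal{M}\hn''$ are covering projections with deck groups $\,\bg\hn'$ and $\,\bg\hn''$ (Theorem~\ref{restr}(ii) and Remark~\ref{stblz}). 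Transporting the action to $\,\bg\hs$ along the bijection above turns it into $\,\gamma\mapsto\beta\gamma\alpha^{-1}$, whose orbits are the double cosets $\,\bg\hn''\gamma\hs\bg\hn'=\gamma\hs\bg\hn'\bg\hn''$ — the equality holding because $\,\bg\hn'$ and $\,\bg\hn'\bg\hn''$ are normal in $\,\bg$. Therefore $\,\mathcal{M}\hn'\cap\mathcal{M}\hn''$ is in natural bijection with $\,\bg/(\bg\hn'\bg\hn'')=\hat\bg\hs$, and in particular is finite. Running the identical argument with $\,\bg\hs$ replaced by the lattice $\,L$ — legitimate because, by Remark~\ref{spcas}, \emph{every} coset of $\,\tvs\hh'$ (or of $\,\tvs\hh''$) is generic for the Bie\-ber\-bach group $\,L$, and the pertinent lattices are then $\,L\cap\tvs\hh'$ and $\,L\cap\tvs\hh''$ irrespective of genericity of $\,\mathcal{E}'\nnh,\mathcal{E}''$ — yields $\,\mathcal{T}\hh'\cap\mathcal{T}\hh''\cong L/(L\nh'L\nh'')=\hat L$, finite. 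Since genericity of $\,\mathcal{E}'\nnh,\mathcal{E}''$ was not used in this last paragraph, conclusion~(d) drops out simultaneously.

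For~(c) I would note that the inclusion $\,\bg\hn'\bg\hn''\hookrightarrow\bg\hs$ is compatible with $\,L\nh'L\nh''\hookrightarrow L\,$ and $\,H\nh'H\nh''\hookrightarrow H$, whence the index count $\,|\hat\bg|=[\bg:L\hh\bg\hn'\bg\hn''\hh]\,[L\hh\bg\hn'\bg\hn'':\bg\hn'\bg\hn''\hh]=[H:H\nh'H\nh''\hh]\,[L:L\cap\bg\hn'\bg\hn''\hh]$. The first factor equals $\,|\hat H|$, the image of $\,\bg\hn'\bg\hn''$ in $\,H=\bg/L\,$ being $\,H\nh'H\nh''$ by~(\ref{axb}). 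For the second factor I must check that $\,L\cap\bg\hn'\bg\hn''=L\nh'L\nh''$: if $\,\gamma'\gamma''\nh$ with $\,\gamma'\in\bg\hn',\,\gamma''\in\bg\hn''$ is a translation, its linear parts satisfy $\,A'A''=1$, while $\,A'\in K\hn'$ acts as the identity on $\,\tvs\hh''$ and $\,A''\in K\hn''$ as the identity on $\,\tvs\hh'$ (as recorded just after~(\ref{cnc})), so $\,A'=A''=1$ and $\,\gamma'\in L\nh',\,\gamma''\in L\nh''$. Hence the second factor is $\,|\hat L|$, and~(c) follows. Finally, for~(b) and~(e): under the bijections of~(a), a point of $\,\mathcal{T}\hh'\cap\mathcal{T}\hh''$ admitting lifts $\,x\in\tvs\hh'$ and $\,x+l\in\tvs\hh''$ (with $\,l\in L$) corresponds to $\,[\hh l\hh]\in\hat L$, while its image under the restriction of the projection $\,\mathcal{T}\to\mathcal{M}\,$ of~(\ref{cmp}) — carried by the same lifts $\,x,x+l\in\mathcal{E}\hs$ and by ``$\gamma$'' the translation by $\,l\hs$ — corresponds to $\,[\hh l\hh]\in\hat\bg\hs$; thus, under these identifications, that projection \emph{is} the homo\-mor\-phism $\,\hat L\to\hat\bg\hs$ induced by $\,L\hookrightarrow\bg$, which is injective precisely because $\,L\cap\bg\hn'\bg\hn''=L\nh'L\nh''$. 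This gives~(e), and the injectivity gives~(b).

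The only genuinely non-routine step is the bookkeeping that makes the incidence-set argument airtight — above all, that the fibres of $\,(x,\gamma)\mapsto\mathrm{pr}(x)\,$ are exactly single $\,\bg\hn'\nnh\times\bg\hn''$-orbits, which is where the covering descriptions of Theorems~\ref{restr}(ii) and~\ref{gnric}(v), and hence genericity, actually enter — together with the normality of $\,\bg\hn'\bg\hn''$ collapsing double cosets to cosets, and the computation $\,L\cap\bg\hn'\bg\hn''=L\nh'L\nh''$ underpinning~(b),~(c),~(e).
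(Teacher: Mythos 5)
Your proposal is correct and takes essentially the same route as the paper: a bijection between $\,\mathcal{M}'\cap\mathcal{M}''$ and $\,\bg/(\bg\hn'\bg\hn'')\,$ obtained by intersecting $\,\mathcal{E}'$ with $\,\bg$-translates of $\,\mathcal{E}''$ and identifying fibres with cosets of $\,\bg\hn'\bg\hn''$ (your incidence-set/orbit packaging is the paper's map $\,\varPsi\,$ in different clothing, with (\ref{iff}) doing the same work), the specialization to the torus via Remark~\ref{spcas} for (d), and the computation $\,L\cap\bg\hn'\bg\hn''=L\nh'L\nh''$ via (\ref{axb}) driving (b), (c), (e). The only cosmetic deviations are your algebraic finiteness argument (the full sublattice $\,L\nh'L\nh''$ has finite index) in place of the paper's geometric one, and the index count $\,|\hat\bg|=|\hat H|\,|\hat L|\,$ in place of the paper's short exact sequence $\,\hat L\to\hat\bg\to\hat H$.
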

\begin{proof}We first prove (a) for 
$\,\mathcal{M}\hn'\cap\mathcal{M}\hn''\nnh$. Finiteness of 
$\,\mathcal{M}\hn'\cap\mathcal{M}\hn''$ follows as 
$\,\mathcal{M}\hn'\nnh,\mathcal{M}\hn''\nnh$, and hence also 
$\,\mathcal{M}\hn'\cap\mathcal{M}\hn''\nnh$, are compact totally geodesic 
sub\-man\-i\-folds of $\,\mathcal{M}$, while 
$\,\mathcal{M}\hn'\cap\mathcal{M}\hn''\nnh$, nonempty by (\ref{opt}), has 
$\,\dim(\mathcal{M}\hn'\cap\mathcal{M}\hn'')=0\,$ due to (\ref{cpl}). The 
mapping $\,\varPsi:\bg\to\mathcal{M}\hn'\cap\mathcal{M}\hn''$ with 
$\,\mathrm{pr}(\mathcal{E}'\nh\cap\gamma(\mathcal{E}''))
=\{\varPsi(\gamma)\}\,$ is 
well defined in view of (\ref{cpl}) applied to $\,\gamma(\mathcal{E}'')\,$ 
rather than $\,\mathcal{E}''\nnh$, and clearly takes values in both 
$\,\mathcal{M}\hn'\nh=\mathrm{pr}(\mathcal{E}')\,$ and 
$\,\mathcal{M}\hn''\nh=\mathrm{pr}(\mathcal{E}'')
=\mathrm{pr}(\gamma(\mathcal{E}''))$. Surjectivity of $\,\varPsi\hs$ follows: 
if $\,\mathrm{pr}(x'')\in\mathcal{M}\hn'\cap\mathcal{M}\hn''\nnh$, where 
$\,x''\nh\in\mathcal{E}''$ then, obviously, 
$\,\mathrm{pr}(x'')=\mathrm{pr}(x')\,$ and $\,x'\nh=\gamma(x'')\,$ for some 
$\,x'\nh\in\mathcal{E}'$ and $\,\gamma\in\bg\nh$, so that 
$\,x'\nh\in\mathcal{E}'\nh\cap\gamma(\mathcal{E}'')\,$ and 
$\,\mathrm{pr}(x'')=\mathrm{pr}(x')\,$ equals $\,\varPsi(\gamma)$, the unique 
element of $\,\mathrm{pr}(\mathcal{E}'\nh\cap\gamma(\mathcal{E}''))$. 
Furthermore, $\,\varPsi$-pre\-im\-ages of elements of 
$\,\mathcal{M}\hn'\cap\mathcal{M}\hn''$ are precisely the cosets of the normal 
sub\-group $\,\bg\hn'\nnh\bg\hn''\nnh$ of $\,\bg\hs$ (which clearly implies (a) 
for $\,\mathcal{M}\hn'\cap\mathcal{M}\hn''$). Namely, the left and right 
cosets coincide, and so elements $\,\gamma\nh_1\w,\gamma\nh_2\w$ of 
$\,\bg\hs$ lie in the same coset of $\,\bg\hn'\nnh\bg\hn''$ if and only if 
\begin{equation}\label{smc}
\gamma'\nh\circ\gamma\nh_1\w=\gamma\nh_2\w\circ\gamma''\mathrm{\ \ for\ 
some\ }\,\gamma'\nh\in\bg\hn'\mathrm{\ and\ }\,\gamma''\nh\in\bg\hn''\nh.
\end{equation}
Now let $\,\gamma\nh_1\w,\gamma\nh_2\w$ lie in the same coset of 
$\,\bg\hn'\nnh\bg\hn''\nnh$. For $\,\gamma'\nh,\gamma''$ with (\ref{smc}), 
$\,\gamma'(\mathcal{E}')=\mathcal{E}'$ and 
$\,\gamma''(\mathcal{E}'')=\mathcal{E}''$ by the definition (\ref{stb}) of 
$\,\stb\hh'\nnh,\stb\hh''$ and their identification with 
$\,\bg\hn'\nnh,\bg\hn''$ (see above). Thus, 
$\,\{\varPsi(\gamma\nh_1\w)\}
=\mathrm{pr}(\mathcal{E}'\nh\cap\gamma\nh_1\w(\mathcal{E}''))
=\mathrm{pr}(\gamma'(\mathcal{E}'\nh\cap\gamma\nh_1\w(\mathcal{E}'')))
=\mathrm{pr}(\gamma'(\mathcal{E}')\cap\gamma'(\gamma\nh_1\w(\mathcal{E}'')))
=\mathrm{pr}(\mathcal{E}'\nh\cap\gamma'(\gamma\nh_1\w(\mathcal{E}'')))
=\mathrm{pr}(\mathcal{E}'\nh\cap\gamma\nh_2\w(\gamma''(\mathcal{E}'')))
=\mathrm{pr}(\mathcal{E}'\nh\cap\gamma\nh_2\w(\mathcal{E}''))
=\{\varPsi(\gamma\nh_2\w)\}$. Conversely, if 
$\,\gamma\nh_1\w,\gamma\nh_2\w\in\bg\hs$ and 
$\,\varPsi(\gamma\nh_1\w)=\varPsi(\gamma\nh_2\w)$, the unique points 
$\,x\hn_1\w$ of $\,\mathcal{E}'\nh\cap\gamma\nh_1\w(\mathcal{E}'')\,$ 
and $\,x\hn_2\w$ of $\,\mathcal{E}'\nh\cap\gamma\nh_2\w(\mathcal{E}'')$ 
both lie in the same $\,\bg$-or\-bit, and hence 
$\,x\hn_2\w=\gamma(x\hn_1\w)\,$ with some $\,\gamma\in\bg\nh$. For 
$\,\gamma'\nh=\gamma$ and 
$\,\gamma''\nh=\gamma\nh_2^{-\nh1}\nh\circ\gamma\circ\gamma\nh_1\w$, the image 
$\,\gamma'(\mathcal{E}')\,$ (or, $\,\gamma''(\mathcal{E}'')$) intersects 
$\,\mathcal{E}'$ (or, $\,\mathcal{E}''$), the common point being 
$\,x\hn_2\w=\gamma(x\hn_1\w)$ or, respectively, 
$\,\gamma\nh_2^{-\nh1}(x\hn_2\w)=\gamma\nh_2^{-\nh1}(\gamma(x\hn_1\w))$. From 
(\ref{iff}) we thus obtain $\,\gamma'\nh\in\stb\hh'\nh=\bg\hn'$ and 
$\,\gamma''\nh\in\stb\hh''\nh=\bg\hn''\nnh$, which yields (\ref{smc}).

Assertion (a) for $\,\mathcal{T}\hh'\cap\mathcal{T}\hh''\nnh$, along with (d), 
now follows as a special case; see Remark~\ref{spcas}.

Except for the word `injective' the claim made in (e) is immediate if one uses 
the mapping $\,\varPsi:\bg\to\mathcal{M}\hn'\cap\mathcal{M}\hn''$ defined 
above and its analog $\,L\to\mathcal{T}\hh'\cap\mathcal{T}\hh''$ obtained by 
replacing $\,\bg\nh,\mathcal{M}\hn'\nnh,\mathcal{M}\hn''$ and 
$\,\mathrm{pr}\,$ with $\,L,\mathcal{T}\hh'\nnh,\mathcal{T}\hh''$ and the 
projection $\,\mathcal{E}\to\mathcal{T}\nh=\hs\mathcal{E}/\nh L$. This yields 
(b), injectivity of the homo\-mor\-phism 
$\,\hat L\to\hat \bg\hs$ being immediate: if an element of $\,L\,$ lies in 
$\,\bg\hn'\nnh\bg\hn''$ (and hence has the form 
$\,\gamma'\nh\circ\gamma''\nnh$, where 
$\,(\gamma'\nnh,\gamma'')\in\bg\hn'\nh\times\bg\hn''$), (\ref{axb}) implies 
that $\,\gamma'\nnh,\gamma''$ are translations with 
$\,\gamma'\nh\in L\nh'\nh=L\cap\tvs\hh'$ and 
$\,\gamma''\nh\in L\nh''\nh=L\cap\tvs\hh''$ (see the lines preceding
(\ref{cnc})); in other words, $\,\gamma'\nh\circ\gamma''$ represents zero in 
$\,\hat L$.

Finally, $\,\hat L\,$ identified as above with a sub\-group of $\,\hat \bg\hs$ 
equals the kernel of the clear\-ly-sur\-jec\-tive homo\-mor\-phism 
$\,\hat\bg\to\hat H\nh$, induced by $\,\bg\to H\,$ in (\ref{exa}) (which, 
combined with (e), proves (c)). Namely, $\,\hat L\,$ contains the kernel (the 
other inclusion being obvious): if the linear part of $\,\gamma\in\bg\hs$ lies 
in $\,H\nh'\nnh H\nh''\nnh$, and so equals the linear part of 
$\,\gamma'\nh\circ\gamma''$ for some 
$\,(\gamma'\nnh,\gamma'')\in\bg\hn'\nh\times\bg\hn''\nnh$, then 
$\,\gamma=\lambda\circ\gamma'\nh\circ\gamma''\nnh$, where $\,\lambda\in L$, as 
required.
\end{proof}

\section{Leaves and integral ho\-mol\-o\-gy}\label{ih}
This section once again employs the assumptions and notations of 
Theorem~\ref{restr}, with $\,\dim\tvs\nh=n\,$ and $\,\dim\tvs\hh'\nh=k$, where 
$\,0<k<n$. As the holonomy group 
$\,H\subseteq\mathrm{Iso}\,\tvs\cong\mathrm{O}\hs(n)$ is finite 
(Remark~\ref{frpdc}), $\,\mathrm{det}\hh(H)\subseteq\{1,-\nh1\}$. 
In other words, the elements of $\,H\,$ have the determinants $\,\pm\nh1$. 
Using the covering projection 
$\,\mathcal{T}\nnh\to\mathcal{M}=\mathcal{T}\nnh/\nh H\nh$, cf.\ 
(\ref{cmp}) and the line following it, we see that
\begin{equation}\label{ori}
\mathrm{the\ condition\ }\,\mathrm{det}\hh(H)=\{1\}\,\mathrm{\ amounts\ to\ 
o\-ri\-ent\-a\-bil\-i\-ty\ of\ }\,\mathcal{M}\hh.
\end{equation}
By Theorem~\ref{gnric}(iii), the generic leaves 
of $\,F\hskip-3.8pt_{\mathcal{M}}\w$, defined as in the line following 
(\ref{krh}), are either all o\-ri\-ent\-a\-ble or all non\-o\-ri\-ent\-a\-ble.
\begin{lemma}\label{orien}{\smallit Let\/ $\,\mathcal{M}\,$ be 
o\-ri\-ent\-a\-ble. Then all the generic leaves\/ $\,\mathcal{M}\hn'$ of\/ 
$\,F\hskip-3.8pt_{\mathcal{M}}\w\hs$ may be o\-ri\-ent\-ed so as to represent 
the same nonzero\/ $\,k$-di\-men\-sion\-al real ho\-mol\-o\-gy class\/ 
$\,[\mathcal{M}\hn']\in H\nnh_k\w(\mathcal{M},\bbR)$.
}
\end{lemma}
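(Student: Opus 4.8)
The plan is to build the orientations upstairs, on the covering torus $\mathcal T=\mathcal E/L$, where the foliation $F_{\mathcal T}$ is transparent, and then to push them down to $\mathcal M=\mathcal T/H$.

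First I would fix an orientation of the $k$-dimensional subspace $\tvs'$. Every leaf of $F_{\mathcal E}$ is a coset of $\tvs'$ in $\mathcal E$, so this orients each of them; since $L$ acts on $\mathcal E$ by translations, which preserve the orientation of $\tvs'$, the orientation descends to a coherent orientation of every leaf of $F_{\mathcal T}$. By Lemma~\ref{cplvs} every leaf of $F_{\mathcal T}$ is a translate of the subtorus $\tvs'/(L\cap\tvs')$, and on a connected torus a translation is homotopic to the identity and preserves this orientation; hence all leaves of $F_{\mathcal T}$ carry the $\tvs'$-orientation and represent one and the same class $\tau\in H_k(\mathcal T,\bbZ)$. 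This $\tau$ is nonzero, indeed primitive: extending a $\bbZ$-basis of $L\cap\tvs'$ to a $\bbZ$-basis of $L$ (Remark~\ref{lttce}(c)) turns $\tvs'/(L\cap\tvs')$ into a coordinate subtorus, whose fundamental class is a basis vector of $H_k(\mathcal T,\bbZ)\cong\Lambda^kL$.

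Next I would transport $\tau$ down through the $|H|$-fold covering $\mathrm{pr}:\mathcal E\to\mathcal M$, which factors as $\mathcal E\to\mathcal T\to\mathcal M$ by (\ref{cmp}). For a generic leaf $\mathcal M'=\mathrm{pr}(\mathcal E')$, Theorem~\ref{gnric}(v) and Remark~\ref{subgp} identify $\mathcal M'$ with $\mathcal T'/H'$, where $\mathcal T'=\mathcal E'/(L\cap\tvs')$ is the corresponding leaf of $F_{\mathcal T}$, the group $H'$ is the image of $K'$ of (\ref{kpr}) under restriction to $\tvs'$, and the degree $d=|H'|$ does not depend on the generic leaf chosen; thus the restriction of $\mathcal T\to\mathcal M$ to $\mathcal T'$ is a $d$-fold covering $\mathcal T'\to\mathcal M'$ with deck group $H'$. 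Here orientability of $\mathcal M$ enters: by (\ref{ori}) every $A\in H$ has $\det A=1$, while every $A\in K'$ acts as the identity on the orthogonal complement of $\tvs'$, so $\det(A|_{\tvs'})=1$ for $A\in K'$; therefore $H'$ acts on $\mathcal T'$ preserving the $\tvs'$-orientation, that orientation descends to $\mathcal M'$, and $\mathcal T'\to\mathcal M'$ becomes an orientation-preserving $d$-fold covering, giving $\mathrm{pr}_*[\mathcal T']=d\,[\mathcal M']$. Since $[\mathcal T']=\tau$ and $d$ is the same for every generic leaf, this forces $[\mathcal M']=[\mathcal M'']$ in the real vector space $H_k(\mathcal M,\bbR)$ for any two generic leaves, and exhibits the common class as $d^{-1}\mathrm{pr}_*\tau$.

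It remains to prove nonvanishing of $[\mathcal M']$ and, closely related, that the recipe above orients the generic leaves coherently, the delicate point being that the several leaves of $F_{\mathcal T}$ lying over a single $\mathcal M'$ must induce on it one and the same orientation. The route I would push through uses Poincar\'e duality on the closed oriented manifold $\mathcal M$: choose an $H$-invariant $L$-subspace $\tvs''$ with $\tvs=\tvs'\oplus\tvs''$ (Theorem~\ref{invcp}) and a generic leaf $\mathcal M''$ of the associated foliation. By (\ref{opt}) the intersection $\mathcal M'\cap\mathcal M''$ is a nonempty finite set, with $|\mathcal M'\cap\mathcal M''|=|\mathcal T'\cap\mathcal T''|\cdot|\hat H|\ge1$ by Theorem~\ref{intrs}(c); at every intersection point the tangent spaces are $\tvs'$ and $\tvs''$, carrying a fixed relative orientation, so all local intersection signs agree and the algebraic intersection number equals $\pm|\mathcal M'\cap\mathcal M''|\ne0$, whence $[\mathcal M']\ne0$. (Equivalently, via the transfer $\mathrm{pr}^!$: $\mathrm{pr}^!\,[\mathcal M']=[\mathrm{pr}^{-1}(\mathcal M')]$ is a sum of $|H|/d$ leaves of $F_{\mathcal T}$, all equal once coherently oriented to $\tau$, so $\mathrm{pr}^!\,[\mathcal M']=(|H|/d)\tau\ne0$, while $\mathrm{pr}^!$ is injective since $\mathrm{pr}_*\,\mathrm{pr}^!=|H|\cdot\mathrm{id}$.) I expect the crux to be exactly the coherence of the family of leaf orientations, i.e.\ the triviality of the determinant character $A\mapsto\det(A|_{\tvs'})$ of $H$; this is where orientability of $\mathcal M$ must be exploited in full, and establishing it is where the real work of the lemma lies.
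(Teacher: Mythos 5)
The ``same class'' half of your argument coincides with the paper's proof: orient $\tvs\hh'$, observe via (\ref{kpr}), (\ref{axb}) and (\ref{ori}) that the generic isotropy group preserves this orientation, so that the leaves of $F\hskip-3pt_{\mathcal{T}}\w$ and the generic leaves of $F\hskip-3.8pt_{\mathcal{M}}\w$ acquire orientations making each $\mathcal{T}\hh'\to\mathcal{M}\hn'$ an orientation\hyp preserving $d$-fold covering with $d=|H\nh'|$ independent of the leaf; translational transitivity gives a single class $\tau=[\mathcal{T}\hh']$, and functoriality gives $d\,[\mathcal{M}\hn']=\mathrm{pr}_*\tau$, hence a common class for all generic leaves. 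No objection there.

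The gap is in the nonvanishing, and it sits exactly at the point you flag and postpone: both of your routes need the character $A\mapsto\det(A|_{\tvs\hh'})$ to be trivial on all of $H$, not merely on $K\hn'$, and this does not follow from orientability of $\mathcal{M}$, which only gives $\det A=1$ and is compatible with $\det(A|_{\tvs\hh'})=\det(A|_{\tvs\hh''})=-1$. Concretely, let $n=4$, $L=\bbZ^4$, and let $\bg$ be generated by $L$ and $\gamma(x)=Ax+(1/2,0,0,0)$ with $A=\mathrm{diag}(1,-1,1,-1)$: this is a Bieberbach group, $\mathcal{M}=\bbR^4\!/\bg$ is orientable, and $\tvs\hh'=\mathrm{span}(e_1,e_2)$, $\tvs\hh''=\mathrm{span}(e_3,e_4)$ are complementary $H$-invariant $L$-subspaces with $K\hn'=\{1\}$, so the generic leaves for $\tvs\hh'$ are embedded $2$-tori with $d=1$. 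The deck involution $\bar\gamma$ of $\mathcal{T}\to\mathcal{M}$ has linear part $A$, so $\bar\gamma_*\tau=-\tau$ in $H_2(\mathcal{T},\bbR)$; since $\mathrm{pr}\circ\bar\gamma=\mathrm{pr}$, this forces $[\mathcal{M}\hn']=\mathrm{pr}_*\tau=-\mathrm{pr}_*\tau=0$. Accordingly, both of your mechanisms fail here, precisely at the unproved step: the two components of $\mathrm{pr}^{-1}(\mathcal{M}\hn')$ inherit opposite lifted orientations, so $\mathrm{pr}^![\mathcal{M}\hn']=\tau-\tau=0$ rather than $(|H|/d)\tau$; and the two points of $\mathcal{M}\hn'\cap\mathcal{M}\hn''$ carry opposite local signs (the tangent planes are parallel to $\tvs\hh'$ and $\tvs\hh''$, but at the second point the orientation induced from $\mathcal{M}\hn''$ is the $\gamma$-image of the chosen one, twisted by $\det(A|_{\tvs\hh''})=-1$), so the intersection number is $0$, not $\pm 2$. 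Thus the ``coherence'' you defer is not something further work can extract from the stated hypotheses; it is an additional assumption, and without it the nonvanishing conclusion itself is problematic, as the example shows.

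For comparison, the paper's proof finishes by a different mechanism, with no intersection theory: it integrates a parallel positive volume form, descended from a constant $k$-form on $\tvs\hh'$, over the oriented generic leaf. Note, though, that to pair with $[\mathcal{M}\hn']$ inside $H\nnh_k\w(\mathcal{M},\bbR)$ such a form must come from a closed $k$-form defined on all of $\mathcal{M}$, and the $H$-invariance required for that is again exactly the determinant condition you isolated; so your diagnosis of where the real difficulty lies is accurate, and under that extra condition both your intersection/transfer arguments and the paper's volume-form argument go through.
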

\begin{proof}A fixed orientation of $\,\tvs\hh'\nnh$, being preserved, due to 
(\ref{kpr}) -- (\ref{axb}) and (\ref{ori}), by the generic iso\-tropy group 
$\,\stb\hh'\nnh$, gives rise to orientations of all leaves 
$\,\mathcal{T}\hh'$ of $\,F\hskip-3pt_{\mathcal{T}}\w$ and all generic leaves 
$\,\mathcal{M}\hn'$ of $\,F\hskip-3.8pt_{\mathcal{M}}\w$, so as to make 
the covering projections $\,\mathcal{T}\hh'\nnh\to\nh\mathcal{M}\hn'$ in the 
line following (\ref{cpr}) o\-ri\-en\-ta\-tion-pre\-serv\-ing. Since the torus 
group $\,\tvs\nnh/\nh L\,$ acts transitively on the o\-ri\-ent\-ed leaves 
$\,\mathcal{T}\hh'\nnh$, they all represent a single real ho\-mol\-o\-gy class 
$\,[\mathcal{T}\hh']\in H\nnh_k\w(\mathcal{T}\nh,\bbR)$, equal to the image of 
the fundamental class of $\,\mathcal{T}\hh'$ under the inclusion 
$\,\mathcal{T}\hh'\nnh\to\mathcal{T}\nnh$. At the same time, for generic 
leaves $\,\mathcal{M}\hn'\nnh$, the $\,d\hh$-fold covering projection 
$\,\mathcal{T}\hh'\nnh\to\nh\mathcal{M}\hn'$ (where $\,d=|H\nh'|$ does not 
depend on the choice of $\,\mathcal{M}\hn'\nnh$, cf.\ Remark~\ref{subgp}) 
sends the fundamental class of $\,\mathcal{T}\hh'$ to $\,d\,$ times the 
fundamental class of $\,\mathcal{M}\hn'\nnh$. Thus, by functoriality, 
$\,d[\mathcal{M}\hn']\in H\nnh_k\w(\mathcal{M},\bbR)$ is the image of 
$\,[\mathcal{T}\hh']\in H\nnh_k\w(\mathcal{T}\nh,\bbR)\,$ under the covering 
projection $\,\mathcal{T}\nnh\to\mathcal{M}$, which makes it the same for all 
generic leaves $\,\mathcal{M}\hn'\nnh$. Finally, $\,[\mathcal{M}\hn']\ne0$, 
since a fixed constant positive differential $\,k$-form on the o\-ri\-ent\-ed 
space $\,\tvs\hh'$ descends, in view of the first line of this proof, to a 
parallel positive volume form on each o\-ri\-ent\-ed generic leaf 
$\,\mathcal{M}\hn'\nnh$, which yields a positive value when integrated over 
$\,[\mathcal{M}\hn']$.
\end{proof}
\begin{remark}\label{inthm}If $\,\mathcal{M}\,$ is o\-ri\-ent\-a\-ble, the 
first two cardinalities in Theorem~\ref{intrs}(c) equal the intersection 
numbers of the real ho\-mol\-o\-gy classes 
$\,[\mathcal{M}\hn'],[\mathcal{M}\hn'']$, or 
$\,[\mathcal{T}\hh'],[\mathcal{T}\hh'']$, arising via Lemma~\ref{orien}, which 
is consistent with the fact that -- according to Theorem~\ref{intrs}(a) 
and Remark~\ref{subgp} -- they depend just on the two mutually complementary 
$\,H\nnh$-in\-var\-i\-ant $\,L$-sub\-spaces $\,\tvs\hh'\nnh,\tvs\hh''$ of 
$\,\tvs\nnh$, and not on the individual generic leaves 
$\,\mathcal{M}\hn'\nnh,\mathcal{M}\hn''\nnh,\mathcal{T}\hh'$ or 
$\,\mathcal{T}\hh''\nnh$.
\end{remark}

\section{Generalized Klein bottles}\label{gk}
This section presents some known examples \cite[p.\ 163]{charlap} to 
illustrate our discussion.

Let $\,\varSigma\,$ and $\,r\nnh_\theta\w:\varSigma\to\varSigma\,$ denote the 
unit circle in $\,\bbC\,$ and the rotation by angle $\,\theta$ 
(multiplication by $\,e^{i\theta}$). For 
$\,(t,\psi)\in\bbR\times\bbZ\nh^\varSigma$ and $\,f\in\bbR\nh^\varSigma\nnh$, 
cf.\ Example~\ref{genrl}, the assignment
\begin{equation}\label{tpf}
((t,\psi),f)\,\mapsto\,f\circ r\nnh_{2\pi t}\w\hs+\,t\,+\,\psi\hh,
\end{equation}
defines a left 
action on $\,\bbR\nh^\varSigma$ of the group 
$\,\bbR\times\bbZ\nh^\varSigma\nnh$, with the group operation 
$\,(t,\psi)(t'\nh,\psi')=(t\,+\,t',\,\psi'\nh\circ r\nnh_{2\pi t}\w\nh+\psi)$. 
The term $\,t\,$ in (\ref{tpf}) is the constant function 
$\,t:\varSigma\to\bbR$, and one has the obvious short exact sequence 
$\,\bbZ\nh^\varSigma\nh\to\hs\bbR\times\bbZ\nh^\varSigma\nh\to\hs\bbR$, the 
arrows being $\,\psi\mapsto(0,\psi)$ and, respectively, 
$\,(t,\psi)\mapsto t$. 

The functions $\,f:\varSigma\to\bbR\,$ are not assumed continuous and, 
whenever $\,H\subseteq\varSigma$, we treat $\,\bbR\nnh^H$ (and $\,\bbZ\nh^H$) 
as subsets of $\,\bbR\nh^\varSigma$ (and $\,\bbZ\nh^\varSigma$) via the zero 
extension of functions $\hs H\to\bbR\hs$ to $\,\varSigma$. For $\,n\ge2\,$ and 
the group $\,H=\bbZ_n\w\subseteq\varSigma\,$ of $\,n$th roots of unity, 
$\,\bbZ\nh^H\cong\bbZ^n$ is a lattice in the Euclidean space 
$\,\tvs\nh=\bbR\nnh^H\cong\rn\nnh$, and the action (\ref{tpf}) has a 
restriction to an af\-fine isometric action of the subgroup 
$\,\bg=[(\nh1\nh/\nh n\nh)\bbZ]\times\bbZ_0^{\hskip-1.1ptH}
\subseteq\bbR\times\bbZ\nh^\varSigma$ on $\,\tvs\nh$, with 
the subgroup $\,\bbZ_0^{\hskip-1.1ptH}\cong\bbZ^\nmo$ of $\,\bbZ\nh^H$ given 
by $\,\{\psi\in\bbZ\nh^H:\psi\nnh_{\mathrm{avg}}\w\nh=0\}$, where 
$\,(\hskip2.7pt)\nh_{\mathrm{avg}}\w$ denotes the averaging functional 
$\,\tvs\nh\to\bbR$. Note that, in the right-hand side of (\ref{tpf}) for 
$\,(t,\psi)\in\bg\nh$,
\begin{equation}\label{avg}
t_{\mathrm{avg}}\w\hs=\,t\hh,\hskip28pt
\psi\nnh_{\mathrm{avg}}\w\hs=\,0\hh,\hskip28pt
(f\nnh\circ\nnh r\nnh_{2\pi t}\w)\nh_{\mathrm{avg}}\w\hs
=\,f\hskip-2.3pt_{\mathrm{avg}}\w\hh.
\end{equation}
\begin{lemma}\label{biebg}
{\smallit
These\/ $\,H\nh,\tvs\,$ and\/ $\,\bg\hs$ have the following properties.
\begin{enumerate}
  \def\theenumi{{\rm\roman{enumi}}}
\item The action of\/ $\,\bg\hs$ on\/ $\,\tvs\,$ is effective.
\item $\bg\hs$ constitutes a Bie\-ber\-bach group in the underlying Euclidean 
af\-fine\/ $\,n$-space of\/ $\,\tvs\nh$.
\item The holonomy group and lattice sub\-group of\/ $\,\bg\hs$ are our\/ 
$\,H\cong\bbZ_n\w$, acting on\/ $\,\tvs\hs$ linearly by\/ 
$\,H\times\tvs\ni(e^{i\theta}\nh,f)\mapsto f\circ r\nnh_\theta\w\in\tvs\nnh$, 
and\/ $\,L=\bbZ\times\bbZ_0^{\hskip-1.1ptH}\nnh$.
\item \,As a transformation of\/ $\,\tvs\nnh$, each\/ $\,(t,\psi)\in L\,$ equals 
the translation by\/ $\,t+\psi$.
\item $L\,$ consists of all translations by vectors\/ 
$\,\psi\hh'\in\bbZ\nh^H\,$ such that\/ 
$\,\psi\hh'_{\hskip-2.3pt\mathrm{avg}}\nh\in\bbZ$.
\end{enumerate}
An example of two mutually complementary\/ $\,H\nh$-in\-var\-i\-ant\/ 
$\,L$-sub\-spaces of\/ $\,\tvs\nh$, in the sense of\/ {\rm(\ref{cpl})} and 
Definition\/~{\rm\ref{lsbsp}}, is provided by the line\/ $\,\tvs\hh'$ of 
constant functions $\,H\to\bbR\,$ and the hyper\-plane\/ $\,\tvs\hh''$ 
consisting of all\/ $\,f:H\to\bbR\,$ with\/ 
$\,f\hskip-2.3pt_{\mathrm{avg}}\w=0$. The generic iso\-tropy groups 
$\,\stb\hh'\nnh,\stb\hh''\nnh\subseteq\bg\hs$ associated via\/ 
{\rm(\ref{krh})} with\/ $\,\tvs\hh'$ and\/ $\,\tvs\hh''$ are the translation 
groups\/ $\,\bbZ\times\nnh\{0\}\,$ and\/ 
$\,\{0\}\nnh\times\bbZ_0^{\hskip-1.1ptH}\nnh$, both contained in\/ $\,L$. 
Every coset of the\/ $\,L$-sub\-space\/ $\,\tvs\hh''$ is generic, cf.\ 
Section\/~{\rm\ref{gg}}.
}
\end{lemma}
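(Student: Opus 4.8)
The plan is to read all of the stated properties off (\ref{tpf}) and (\ref{avg}) by substitution, invoking Sections~\ref{lv} and~\ref{gg} only to shorten the last computations. Throughout, $\tvs=\bbR^H$ carries the $\ell^2$ inner product and, for $(t,\psi)\in\bg$, the map (\ref{tpf}) is affine with linear part $f\mapsto f\circ r_{2\pi t}$, which permutes the coordinates indexed by $H$ — since $t\in(1/n)\bbZ$ forces $r_{2\pi t}$ to permute $H=\bbZ_n$ — and is therefore an $\ell^2$-isometry. For (i): if $(t,\psi)$ fixes every $f\in\tvs$ then, with $f=0$, the translational part $t+\psi$ vanishes; applying the averaging functional and (\ref{avg}) gives first $t=0$ and then $\psi=0$, so the action is effective and $\bg$ is identified with its image in $\mathrm{Iso}\,\mathcal{E}$. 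Next I would compute $L=\bg\cap\tvs$: the linear part of $(t,\psi)$ is the identity iff $r_{2\pi t}$ fixes $H$ pointwise, i.e.\ $e^{2\pi it}=1$, i.e.\ $t\in\bbZ$, whence $L=\{(t,\psi):t\in\bbZ,\ \psi\in\bbZ_0^H\}=\bbZ\times\bbZ_0^H$, and for such $(t,\psi)$ formula (\ref{tpf}) reduces to $f\mapsto f+(t+\psi)$, which is (iv). Then (v) follows, since the vectors $t+\psi$ with $t\in\bbZ$, $\psi\in\bbZ_0^H$ are exactly the $\psi'\in\bbZ^H$ with $\psi'_{\mathrm{avg}}\in\bbZ$ (one inclusion by (\ref{avg}), the other by writing $\psi'=\psi'_{\mathrm{avg}}+(\psi'-\psi'_{\mathrm{avg}})$). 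Finally, the holonomy group of $\bg$ is the image of the linear-part homomorphism, i.e.\ the group of operators $f\mapsto f\circ r_{2\pi t}$, $t\in(1/n)\bbZ$; as $e^{2\pi it}$ runs over all of $\bbZ_n$ and $e^{i\theta}\mapsto(f\mapsto f\circ r_\theta)$ is injective on $\bbZ_n$, this group is $\cong\bbZ_n$, acting as stated, which completes (iii).

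For (ii) it remains to check the three Bieberbach conditions on the image of $\bg$. It is torsion-free because $(t,\psi)\mapsto t$ is a homomorphism to $\bbR$, so $(t,\psi)^m=(0,0)$ forces $t=0$ and then $\psi=0$. It is discrete because $L$, being a subgroup of $\bbZ^H$, is discrete (hence closed) in $\tvs$, while $[\bg:L]=|H|<\infty$. And it admits a compact fundamental domain because $L$ is a \emph{full} lattice in $\tvs$: $L$ contains $\bbZ_0^H$, which spans the hyperplane $\{f:f_{\mathrm{avg}}=0\}$ over $\bbR$, together with the constant function $1$, transverse to that hyperplane, so $L$ is a discrete subgroup of $\tvs$ spanning $\tvs$, hence a lattice by Remark~\ref{lttce}(a); the parallelepiped fundamental domain of Remark~\ref{cptfd} for $L$ then meets every $L$-orbit, hence every $\bg$-orbit.

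For the remaining assertions: the line $\tvs\hh'$ of constant functions and the hyperplane $\tvs\hh''=\{f:f_{\mathrm{avg}}=0\}$ are $H$-invariant (composing a constant with $r_\theta$ leaves it constant, and composing with $r_\theta$ preserves $f_{\mathrm{avg}}$), they are $L$-subspaces ($1\in L$ spans $\tvs\hh'$, and $\bbZ_0^H\subseteq L$ spans $\tvs\hh''$), and $\tvs=\tvs\hh'\oplus\tvs\hh''$ by the dimension count, since $\tvs\hh'\cap\tvs\hh''=\{0\}$ — this is the case $\tilde H=H$ of Example~\ref{genrl}, except that our lattice is the proper sublattice $\bbZ\times\bbZ_0^H$ of $\bbZ^H$, for which the same checks apply. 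For the generic isotropy groups I would use Theorem~\ref{gnric}(iv), which applies since both $\tvs\hh'$ and $\tvs\hh''$ satisfy (\ref{lrd}) (recall $n\ge2$). The $\ell^2$-complement of $\tvs\hh'$ is $\tvs\hh''$, and $H$ acts on it by nontrivial permutations (a nonidentity permutation of $H$ fixes the constant $1$, so were it the identity on $\tvs\hh''$ it would be the identity on $\tvs=\bbR\cdot1\oplus\tvs\hh''$); hence $K'=\{1\}$ in (\ref{kpr}), and by (\ref{axb}) the group $\stb\hh'$ consists of the translations in $\bg$ by vectors of $\tvs\hh'$, i.e.\ by constant functions, which by (v) are the integer constants, so $\stb\hh'=\bbZ\times\{0\}$. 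Dually, the complement of $\tvs\hh''$ is $\tvs\hh'$, on which $H$ acts trivially, so $K''=H$, the linear-part constraint in (\ref{axb}) is vacuous, and $\stb\hh''$ is the set of $(t,\psi)\in\bg$ whose translational part $t+\psi$ lies in $\tvs\hh''$, i.e.\ with $t+\psi_{\mathrm{avg}}=t=0$ by (\ref{avg}), so $\stb\hh''=\{0\}\times\bbZ_0^H$. Both lie inside $L$. Finally, the induced action of $\bg$ on the affine line $\mathcal{E}/\tvs\hh''$, coordinatized by the averaging functional (whose kernel is $\tvs\hh''$), sends $c$ to $c+t$ by (\ref{avg}); this action by translations is free, so the isotropy group of \emph{every} coset of $\tvs\hh''$ equals the kernel of $\bg\to\mathrm{Iso}\,[\mathcal{E}/\tvs\hh'']$, which is exactly the genericity condition (\ref{krh}).

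The proof is a chain of substitutions, so there is no serious obstacle; the one point calling for a moment's care is the compact fundamental domain in (ii), which hinges on $L$ being a \emph{full} lattice — equivalently, on $\bbZ_0^H$ having rank $n-1$ in the average-zero hyperplane — which is precisely why the constant $1$ must be adjoined to it.
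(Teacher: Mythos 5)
Your proof is correct and follows essentially the same route as the paper's: every assertion is read off (\ref{tpf})--(\ref{avg}) by substitution, with the identification of $L$ and $H$, torsion-freeness, and the compact fundamental domain (via the full lattice $L$) handled just as in the paper. The only cosmetic differences are that you get discreteness from the finite-index/closed-coset argument instead of the paper's sequential one, and you compute $\stb\hh'$, $\stb\hh''$ and the genericity of cosets of $\tvs\hh''$ by invoking Theorem~\ref{gnric}(iv) with (\ref{kpr})--(\ref{axb}) rather than writing out the quotient actions directly, which amounts to the same kernel computation.
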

\begin{proof}First, $\,\bg\hs$ acts on $\,\tvs\hs$ freely: if 
$\,f\circ r\nnh_{2\pi t}\w\nh+t+\psi=f\nh$, cf.\ (\ref{tpf}), with 
$\,f:H\to\bbR$, applying $\,(\hskip2.7pt)\nh_{\mathrm{avg}}\w$ to both sides, 
we get $\,t=0$, by (\ref{avg}), and hence 
$\,f\nnh\circ\nnh r\nnh_{2\pi t}\w\nh=f\nh$, so that the equality 
$\,f\nnh\circ\nnh r\nnh_{2\pi t}\w\nh+t+\psi=f\,$ reads $\,\psi=0$. Secondly, 
$\,H\,$ and $\,L\,$ described by (iii) arise from $\,\bg\hs$ as required in 
(\ref{lah}): the claim about $\,H\,$ is obvious, and so are (iv) -- (v), 
yielding the inclusion $\,L\subseteq\bg\hn\cap\tvs\nnh$. Conversely, 
$\,\bg\hn\cap\tvs\subseteq\hn L$. To verify this, suppose that 
$\,f\circ r\nnh_{2\pi t}\w\nh+t+\psi=f\nh+\psi\hh'$ for all 
$\,f\in\tvs\nh=\bbR\nnh^H\nnh$, some 
$\,(t,\psi)\in\bg\nnh$, and some $\,\psi\hh'\nh\in\tvs\nnh$. Taking the 
linear parts of both sides, we see that $\,t\in\bbZ\,$ and $\,(t,\psi)\in L$, 
as required.

Our $\,\bg\hs$ has a compact fundamental domain in $\,\tvs\nnh$, since so does 
the lattice $\,L\subseteq\bg\nnh$. Also, $\,\bg\hs$ must be 
tor\-sion-free: as $\,\bg\ni(t,\psi)\mapsto t\in\bbR\,$ is a group 
homo\-mor\-phism, any fi\-nite\hh-or\-der element $\,(t,\psi)\,$ of $\,\bg\hs$ 
has $\,t=0$, and so, by (\ref{tpf}), it acts via translation by $\,\psi$, 
which gives $\,\psi=0$. Next, to establish discreteness of the subset 
$\,\bg\hs$ of $\,\mathrm{Iso}\,\hs\tvs\hs$(and, consequently, (ii)), suppose 
that a sequence $\,(t_k\w,\psi\nh_k\w)\in\bg\hs$ with pairwise distinct terms 
yields, via (\ref{tpf}), a sequence convergent in 
$\,\mathrm{Iso}\,\hs\tvs\nnh$. Evaluating (\ref{tpf}) on $\,f\nh=0$, we get 
$\,(t_k\w,\psi\nh_k\w)\to(t,\psi)\,$ in $\,\bbR\times\bbR\nnh^H$ as 
$\,k\to\infty$, for some $\,(t,\psi)\,$ and, since 
$\,(t_k\w,\psi\nh_k\w)
\in[(\nh1\nh/\nh n\nh)\bbZ]\times\bbZ_0^{\hskip-1.1ptH}\nnh$, the sequence 
$\,(t_k\w,\psi\nh_k\w)\,$ becomes eventually constant, contrary to its terms' 
being pairwise distinct.

The final clause of the lemma follows since, by (iv), a $\,\bbZ\nh$-ba\-sis of 
$\,L\cap\tvs\hh'$ (or, $\,L\cap\tvs\hh''$) may be defined to consist just 
of the constant function $\,1\,$ (or, respectively, of the $\,\nmo$ 
functions $\,\psi\nnh_q:H\to\bbZ$, labeled by $\,q\in H\smallsetminus\{1\}$, 
where $\,\psi\nnh_q(q)=1=-\hh\psi\nnh_q(1)\,$ and $\hs\psi\nnh_q=0\hs$ on 
$\,H\smallsetminus\{1,q\}$. Specifically, 
$\,\psi\hh'\nh=\sum_q\w\psi\hh'(q)\hs\psi\nnh_q$ whenever 
$\,\psi\hh'\in\bbZ\nh^H$ and $\,\psi\hh'_{\hskip-2.3pt\mathrm{avg}}\nh=0$. 
Furthermore, the claims about $\,\stb\hh''$ and genericity of cosets follow 
from (\ref{tpf}) -- (\ref{avg}). The description of $\,\stb\hh'$ is in turn 
immediate if one identifies $\,\tvs\nnh/\hn\tvs\hh'$ with $\,\tvs\hh''$ and 
observes that the quotient action of $\,\bg\hs$ then becomes 
$\,\bg\times\tvs\hh''\ni((t,\psi),f)
\mapsto f\circ r\nnh_{2\pi t}\w\nh+\psi\in\tvs\hh''\nnh$. 

\end{proof}
The compact flat Riemannian manifold $\,\tvs\nnh/\nnh\bg\hs$ arising from our 
Bie\-ber\-bach group $\,\bg\hs$ as in Section~\ref{bg} is called the 
$\,n$-di\-men\-sion\-al {\smallit generalized Klein bottle\/} 
\cite[p.\ 163]{charlap}. The linear functional 
$\,\tvs\ni f\mapsto f\hskip-2.3pt_{\mathrm{avg}}\w\in\bbR\,$ is 
equi\-var\-i\-ant, due to (\ref{avg}), with respect to the actions of 
$\,\bg\hs$ and $\,\bbZ$ (the latter, on $\,\bbR$, via translations by 
multiples of $\,\nh1\nh/\nh n$), relative to the homo\-mor\-phism 
$\,\bg\ni(t,\psi)\mapsto t\in(\nh1\nh/\nh n\nh)\bbZ$. Thus, it descends, in 
view of Remark~\ref{covpr}(c), to a bundle projection 
$\,\tvs\nnh/\nnh\bg\to\bbR/[(\nh1\nh/\nh n\nh)\bbZ]$, making 
$\,\tvs\nnh/\nnh\bg\hs$ a bundle of tori over the circle. The fibres of this 
bundle are, obviously, the images, under the projection 
$\,\tvs\nh\to\tvs\nnh/\nnh\bg\nh$, of cosets of the $\,L$-sub\-space 
$\,\tvs\hh''\nh\subseteq\tvs\hs$ mentioned in the final clause of 
Lemma~\ref{biebg}, all of them generic. On the other hand, $\,\tvs\hh'$ has 
some non\-ge\-ner\-ic cosets -- an example is $\,\tvs\hh'$ itself, with the 
isotropy group easily seen to be $\,[(\nh1\nh/\nh n\nh)\bbZ]\times\nnh\{0\}$.

The $\,n$-di\-men\-sion\-al generalized Klein bottle, for any $\,n\ge2$, shows 
that the last inclusion of Theorem~\ref{restr}(ii-c) may be proper. 
In fact, the isotropy group $\,[(\nh1\nh/\nh n\nh)\bbZ]\times\nnh\{0\}\,$ of 
the preceding paragraph, although not contained in the lattice $\,L$, acts on 
$\,\tvs\hh'$ by translations.

\section{Remarks on holonomy}\label{rh}
The correspondence -- Remark~\ref{bijct} -- between Bie\-ber\-bach groups and 
compact flat manifolds has an extension to al\-most-Bie\-ber\-bach groups and 
in\-fra-nil\-man\-i\-folds \cite{dekimpe} obtained by using (instead of the 
translation vector space of a Euclidean af\-fine space) a connected, simply 
connected nil\-po\-tent Lie group $\,\sg\,$ acting simply transitively 
on a manifold $\,\mathcal{E}\nh$, and replacing the Bie\-ber\-bach group with 
a tor\-sion-free uniform discrete sub\-group 
$\,\bg\hs$ of $\,\mathrm{Dif{}f}\,\mathcal{E}$ contained in a 
sem\-i\-di\-rect product, canonically transplanted so as to act on 
$\,\mathcal{E}\nh$, of $\,\sg\,$ and a maximal compact sub\-group of 
$\,\mathrm{Aut}\,\sg\nh$. Here `uniform' means admitting a compact fundamental 
domain, cf.\ Remark~\ref{cptfd}. The analogs of (\ref{exa}) and (\ref{cmp}) 
remain valid, reflecting the fact that any in\-fra-nil\-man\-i\-fold is the 
quotient of a nil\-man\-i\-fold under the action of a finite group $\,H\nnh$.

A somewhat similar picture may arise in some cases where $\,\sg\,$ is not 
assumed nil\-po\-tent. As an example, one has 
$\,\sg\cong\mathrm{Spin}\hs(m,1)$, the universal covering group of the 
identity component $\,\sg/\bbZ_2\w\cong\mathrm{SO}\hn^+\nnh(m,1)\,$ of the 
pseu\-\hbox{do\hs-}\hskip0ptor\-thog\-onal group in an 
$\,(m+1)$-di\-men\-sion\-al Lo\-rentz\-i\-an vector space $\,\mathcal {L}$, 
$\,m\ge3$. Here $\,\mathcal{E}\hs$ is the (two\hs-fold) universal covering 
manifold of the or\-tho\-nor\-mal-frame bundle of the future unit 
pseu\-do\-sphere $\,\mathcal{S}\subseteq\hn\mathcal{L}$, isometric to the 
hyperbolic $\,m$-space, and $\,\sg/\bbZ_2\w$ acts on $\,\mathcal{S}\,$ via 
hyperbolic isometries, leading to an action of $\,\sg\,$ on 
$\,\mathcal{E}\nh$. The or\-tho\-nor\-mal-frame bundles of compact hyperbolic 
manifolds obtained as quotients of $\,\mathcal{S}\,$ give rise to the 
required tor\-sion-free uniform discrete sub\-groups $\,\bg\nh$.

The resulting compact quotient manifolds $\,\mathcal{M}=\mathcal{E}/\hn\bg\hs$ 
can be endowed with various interesting Riemannian metrics coming from 
$\,\bg\nh$-in\-var\-i\-ant metrics on $\,\mathcal{E}\nh$. For $\,\bg\hs$ and 
$\,\mathcal{E}\hs$ of the preceding paragraph, a particularly natural choice 
of an invariant {\smallit indefinite\/} metric is provided by the Kil\-ling 
form of $\,\sg\nh$, turning $\,\mathcal{M}\,$ into a compact locally symmetric 
pseu\-\hbox{do\hskip1pt-}\hskip-.7ptRiem\-ann\-i\-an Ein\-stein manifold.

Outside of the Bie\-ber\-bach-group case, however, these metrics are not flat, 
and finite groups $\,H\,$ such as mentioned above cannot serve as their 
holonomy groups. The holonomy interpretation of $\,H\,$ still makes sense, 
though, if -- instead of metrics -- one uses $\,\bg\nh$-in\-var\-i\-ant flat 
connections, with (parallel) torsion, on $\,\mathcal{E}\nh$. Two such 
standard connections are naturally induced by bi-in\-var\-i\-ant connections 
on $\,\sg\nh$, characterized by the property of making all 
left-in\-var\-i\-ant (or, right-in\-var\-i\-ant) vector fields parallel. Both 
of these connections are, due to their naturality, invariant under all 
Lie-group auto\-mor\-phisms of $\,\sg\nh$. 

\setcounter{section}{1}
\renewcommand{\thesection}{\Alph{section}}
\setcounter{theorem}{0}
\renewcommand{\thetheorem}{\thesection.\arabic{theorem}}
\section*{Appendix: Hiss and Szczepa\'nski's reducibility theorem}\label{hs}
Let us consider an {\smallit abstract Bie\-ber\-bach group}, that is, any 
tor\-sion-free group $\,\bg\hs$ containing a finitely generated free 
A\-bel\-i\-an normal sub\-group $\,L\,$ of a finite index, which is at the 
same time a maximal A\-bel\-i\-an sub\-group of $\,\bg\nh$. As shown by 
Zas\-sen\-haus \cite{zassenhaus}, up to isomorphisms these groups coincide 
with the Bie\-ber\-bach groups of Section~\ref{bg}, and one can again 
summarize their structure using the short exact sequence
\begin{equation}\label{abs}
L\,\to\,\hs\bg\,\to\,H,\hskip12pt\mathrm{where\ }\,\,H\,=\,\bg/\nh L\hh.
\end{equation}
For the tensor product $\,G\otimes G'$ of A\-bel\-i\-an groups $\,G,G'$ one 
has canonical isomorphisms
\begin{equation}\label{can}
\bbZ\otimes G\cong G\hh,\hskip7pt(G_1\w\nnh\oplus G_2\w)\otimes G'
\cong(G_1\w\nnh\otimes G')\oplus(G_2\w\nnh\otimes G')\hh,\hskip7pt
L\otimes\nh\bbQ\cong\mathrm{Hom}\hs(L\nh^*\nh,\bbQ)\hh,
\end{equation}
where $\,L\nh^*\nh=\mathrm{Hom}\hs(L,\bbZ)\,$ and, for simplicity, $\,L\,$ is 
assumed to be finitely generated and free. In the last case, with a suitable 
integer $\,n\ge0$, there are noncanonical isomorphisms
\begin{equation}\label{ncn}
\mathrm{a)}\hskip6ptL\cong\bbZ^n,\hskip22pt\mathrm{b)}\hskip6ptL\otimes\nh\bbQ
\cong\bbQ\hn^n\nh,
\end{equation}
while, using the injective homo\-mor\-phism 
$\,L\ni\lambda\mapsto\lambda\otimes\nh1\in L\otimes\nh\bbQ\,$ to treat 
$\,L\,$ as a sub\-group of $\,L\otimes\nh\bbQ$, we see that, under suitably 
chosen identifications (\ref{ncn}),
\begin{equation}\label{crs}
\mathrm{the\ inclusion\ }\,L\subseteq L\otimes\nh\bbQ\,\mathrm{\ corresponds\ 
to\ the\ standard\ inclusion\ }\,\bbZ^n\nh\subseteq\hs\bbQ\hn^n\nh.
\end{equation}
Finally, if $\,L\,$ as above is a (full) lattice in an 
fi\-nite-di\-men\-sion\-al real vector space $\,\tvs$ (cf.\ 
Remark~\ref{lttce}), a further canonical isomorphic identification arises:
\begin{equation}\label{spn}
L\otimes\nh\bbQ\cong\mathrm{Span}_\bbQ\w L\hh,
\end{equation}
that is, we may view $\,L\otimes\nh\bbQ\,$ as the rational vector subspace of 
$\,\tvs$ spanned by $\,L$.

Let $\,\bg\hs$ now be an abstract Bie\-ber\-bach group. Hiss and Szczepa\'nski 
\cite[the corollary in Sect.\ 1]{hiss-szczepanski} proved that, if $\,L\,$ 
in (\ref{abs}) satisfies (\ref{ncn}.a) with $\,n\ge2$, then {\smallit the} 
(obviously $\,\bbQ$-lin\-e\-ar) {\smallit action of\/} $\,H\,$ {\smallit on\/} 
$\,L\otimes\nh\bbQ\,$ {\smallit must be reducible}, in the sense of admitting 
a nonzero proper invariant rational vector sub\-space $\,\mathcal{W}\nh$.

Next, using (\ref{crs}), we may write 
$\,L=\bbZ^n\nh\subseteq\hs\bbQ\hn^n\nh=L\otimes\nh\bbQ$, so that 
$\,\mathcal{W}\subseteq\hs\bbQ\hn^n\subseteq\rn\nh$, and the closure 
$\,\tvs\hh'$ of $\,\mathcal{W}\hs$ in $\,\rn$ has the real dimension 
$\,\dim_\bbQ\w\hskip-3pt\mathcal{W}\hs$ (any $\,\bbQ$-ba\-sis of 
$\,\mathcal{W}\hs$ being, obviously, an $\,\bbR$-ba\-sis of $\,\tvs\hh'$). By 
clearing denominators, one can replace such a $\,\bbQ$-ba\-sis with one 
consisting of vectors in $\,L=\bbZ^n\nh$, and so, by Remark~\ref{lttce}(b), 
the intersection $\,L\nh'\nh=L\cap\mathcal{W}\nh=L\cap\tvs\hh'$ is a lattice 
in $\,\tvs\hh'\nnh$. In other words, we obtain (\ref{qdr}).

A stronger version of Hiss and Szczepa\'nski's reducibility theorem was more 
recently established by Lutowski \cite{lutowski}.

\end{document}